\definecolor{myCiteColour}{rgb}{0,0,0}
\definecolor{myLinkColour}{rgb}{0,0,0}
\theoremstyle{plain}
   \newtheorem{theorem}{Theorem}[section]
   \newtheorem{proposition}[theorem]{Proposition}
   \newtheorem{lemma}[theorem]{Lemma}
   \newtheorem{corollary}[theorem]{Corollary}
   \newtheorem{conjecture}[theorem]{Conjecture}
\theoremstyle{definition}
   \newtheorem{definition}{Definition}[section]
   \newtheorem{example}{Example}[section]
\theoremstyle{remark}
   \newtheorem{remark}[theorem]{Remark}
\numberwithin{equation}{section}
\def\kk{\kern.2ex\mbox{\raise.5ex\hbox{{\rule{.35em}{.12ex}}}}\kern.2ex}
\newcommand{\NN}{\mathbb{N}}
\newcommand{\xx}{\mathbf{x}}
\newcommand{\yy}{\mathbf{y}}
\newcommand{\qq}{\mathbf{q}}
\newcommand{\Sym}{\mathcal{S}}
\newcommand{\ZZ}{\mathbb{Z}}
\newcommand{\Stein}{Steingr\'{\i}msson}
\newcommand\restr[2]{{
  \left.\kern-\nulldelimiterspace 
  #1 
  \vphantom{\big|} 
  \right|_{#2}
  }}
\newcommand\catalannumber[4]{
  \fill[white!10]  (#1) rectangle +(#2,#2);
  \fill[fill=white]
  (#1)
  \foreach \dir in {#3}{
    \ifnum\dir=0
    -- ++(1,0)
    \else
    -- ++(0,1)
    \fi
  } |- (#1);
  \draw[help lines] (#1) grid +(#2,#2);
  \draw[dashed] (#1) -- +(#2,#2);
  \coordinate (prev) at (#1);
  \foreach \dir in {#3}{
    \ifnum\dir=0
    \coordinate (dep) at (1,0);
    \else
    \coordinate (dep) at (0,1);
    \fi
    \draw[line width=1pt] (prev) -- ++(dep) coordinate (prev);
  };
  
 \foreach \position in {#4} {
    \draw \position -- +(-1,-1); 
    \draw \position+(-1,0) -- +(0,-1);  
 }
  
}
\def\mylongmapsto#1{%
\begin{tikzpicture}
\draw (0,0.5mm) -- (0,-0.5mm);
\newlength\mylength
\setlength{\mylength}{\widthof{#1}}
\draw[->] (0,0) -- (1.2\mylength,0) node[above,midway] {#1};
\end{tikzpicture}
}
\DeclareMathOperator{\maj}{maj}
\DeclareMathOperator{\imaj}{imaj}
\DeclareMathOperator{\den}{den}
\DeclareMathOperator{\des}{des}
\DeclareMathOperator{\inv}{inv}
\DeclareMathOperator{\Inv}{Inv}
\DeclareMathOperator{\mad}{mad}
\DeclareMathOperator{\mak}{mak}
\DeclareMathOperator{\makl}{makl}
\DeclareMathOperator{\basta}{bast}
\DeclareMathOperator{\bastb}{bast^{\prime}}
\DeclareMathOperator{\bastc}{bast^{\prime \prime}}
\DeclareMathOperator{\fozea}{foze}
\DeclareMathOperator{\fozeb}{foze^{\prime}}
\DeclareMathOperator{\fozec}{foze^{\prime \prime}}
\DeclareMathOperator{\sista}{sist}
\DeclareMathOperator{\rsista}{rsist}
\DeclareMathOperator{\sistb}{sist^{\prime}}
\DeclareMathOperator{\sistc}{sist^{\prime \prime}}
\DeclareMathOperator{\s6}{foze}
\DeclareMathOperator{\charge}{charge}
\DeclareMathOperator{\st}{st}
\DeclareMathOperator{\stat}{stat}
\DeclareMathOperator{\head}{head}
\DeclareMathOperator{\last}{last}
\DeclareMathOperator{\inc}{inc}
\DeclareMathOperator{\Asc}{Asc}
\DeclareMathOperator{\Des}{Des}
\DeclareMathOperator{\iDes}{iDes}
\DeclareMathOperator{\DB}{DB}
\DeclareMathOperator{\DT}{DT}
\DeclareMathOperator{\AB}{AB}
\DeclareMathOperator{\AT}{AT}
\DeclareMathOperator{\LRMax}{LRMax}
\DeclareMathOperator{\LRMin}{LRMin}
\DeclareMathOperator{\lrmin}{lrmin}
\DeclareMathOperator{\lrmax}{lrmax}
\DeclareMathOperator{\Exc}{Exc}
\DeclareMathOperator{\NExc}{NExc}
\DeclareMathOperator{\Proj}{Proj}
\DeclareMathOperator{\Label}{Label}
\DeclareMathOperator{\val}{nval}
\DeclareMathOperator{\area}{area}
\DeclareMathOperator{\vcarea}{vcarea}
\DeclareMathOperator{\vrarea}{vrarea}
\DeclareMathOperator{\Dyck}{\mathcal{D}}
\DeclareMathOperator{\Peak}{Peak}
\DeclareMathOperator{\Valley}{Valley}
\DeclareMathOperator{\Tunnel}{Tunnel}
\DeclareMathOperator{\Ustep}{Up}
\DeclareMathOperator{\Dstep}{Down}
\DeclareMathOperator{\Umass}{mass_U}
\DeclareMathOperator{\Dmass}{mass_D}
\DeclareMathOperator{\mass}{mass}
\DeclareMathOperator{\dr}{dr} 
\DeclareMathOperator{\dd}{df} 
\DeclareMathOperator{\sht}{sups}
\DeclareMathOperator{\sdowns}{sdow}
\DeclareMathOperator{\stun}{stun}
\DeclareMathOperator{\spea}{spea}
\DeclareMathOperator{\npea}{npea}
\DeclareMathOperator{\nval}{nval}
\DeclareMathOperator{\height}{ht}
\DeclareMathOperator{\pos}{pos}
\newcommand\dsh{{\mbox{-}}}
\title[]{Equidistributions of Mahonian statistics over pattern avoiding permutations} 
\author{Nima Amini}
\address{Department of Mathematics, Royal Institute of Technology, SE-100 44 Stockholm,
Sweden}
\email{namini@kth.se}
\begin{document}
\maketitle
\begin{abstract}
A Mahonian $d$-function is a Mahonian statistic that can be expressed as a linear combination of vincular pattern statistics of length at most $d$. Babson and \Stein \hspace{1pt} classified all Mahonian $3$-functions up to trivial bijections and identified many of them with well-known Mahonian statistics in the literature. We prove a host of Mahonian $3$-function equidistributions over pattern avoiding sets of permutations. Tools used include block decomposition, Dyck paths and generating functions.
\end{abstract}

\section{Introduction}
A \textit{combinatorial statistic} on a set $S$ is a map $\stat:S \to \mathbb{N}$. The \textit{distribution} of $\stat$ over $S$ is given by the coefficients of the generating function $\sum_{\sigma \in S} q^{\stat(\sigma)}$.
Let $\Sym_n$ be the set of permutations $\sigma = a_1a_2 \cdots a_n$ of the letters $[n] = \{1, 2, \dots, n \}$ and let $\sigma(k)$ denote the entry $a_k$. Let $\Sym = \bigcup_{n\geq 0} \Sym_n$. The \textit{inversion set} of $\sigma \in \Sym_n$ is defined by $\Inv(\sigma) = \{ (i,j) : i < j \text{ and } \sigma(i) > \sigma(j) \}$. 
A particularly well-studied statistic on $\Sym_n$ is $\inv:\Sym_n \to \mathbb{N}$, given by $\inv(\sigma) = |\Inv(\sigma)|$. An elegant formula for the distribution of the inversion statistic was found in $1839$ by Rodrigues \cite{Rod}
$$
\sum_{\sigma \in \Sym_n} q^{\inv(\sigma)} = [n]_q!,
$$
where $[n]_q! = [1]_q[2]_q \cdots [n]_q$ and $[n]_q = 1 + q +q^2 + \cdots + q^{n-1}$. The \textit{descent set} of $\sigma$ is defined by $\Des(\sigma) = \{ i : \sigma(i) > \sigma(i+1) \}$. In $1915$ MacMahon \cite{Mac} showed that $\inv$ has the same distribution as another statistic, now called the \textit{major index} (due to MacMahon's profession as a major in the british army), given by
$\maj(\sigma) = \sum_{i \in \Des(\sigma)} i$. We also write $\imaj(\sigma) = \maj(\sigma^{-1})$. In honor of MacMahon any permutation statistic with the same distribution as $\maj$ is called \textit{Mahonian}. Mahonian statistics are well-studied in the literature. Since MacMahon's initial work many new Mahonian statistics have been identified. Babson and \Stein \cite{BS} showed that almost all (at the time) known Mahonian statistics can be expressed as linear combinations of statistics counting occurrences of vincular patterns. They further made several conjectures regarding new vincular-pattern based Mahonian statistics. These have since been proved and reproved at various levels of refinement by a number of authors (see e.g. \cite{FZ,Bur,Vaj,CL}).

Two distinct sequences of integers $a_1a_2 \cdots a_n$ and $b_1b_2 \cdots b_n$ are said to be \textit{order isomorphic} provided $a_i < a_j$ if and only if $b_i < b_j$ for all $1 \leq i < j \leq n$. A \textit{vincular pattern} (also known as \textit{generalized pattern}) of length $m$ is a pair $(\pi, X)$ where $\pi$ is a permutation in $\Sym_m$ and $X \subseteq \{0\} \cup [m]$ is a set of adjacencies. Notationwise, adjacencies are indicated by underlining the adjacent entries in $\pi$ (see Example \ref{vincex}). If $0 \in X$ ($m \in X$), then we denote this by adding a square bracket at the beginning (end) of the pattern $\pi$. If $X = \emptyset$, then $(\pi,X)$ coincides with the definition of a \textit{classical pattern}.
A permutation $\sigma = a_1a_2 \cdots a_n \in \Sym_n$ \textit{contains the vincular pattern} $(\pi,X)$ if there is a $m$-tuple $1 \leq i_1 \leq i_2 \leq \cdots \leq i_m \leq n$ such that the following three criteria are satisfied 
\begin{itemize}
\item $a_{i_1}a_{i_2} \cdots a_{i_m}$ is order-isomorphic to $\pi$,
\item $i_{j+1} = i_{j} + 1$ for each $j \in X \setminus \{0,m \}$ and
\item $i_1 = 1$ if $0 \in X$ and $i_m = n$ if $m \in X$. 
\end{itemize} \noindent
We also say that $a_{i_1}a_{i_2} \cdots a_{i_m}$ is an \textit{occurrence} of $\pi$ in $\sigma$. We say that $\sigma$ \textit{avoids} $\pi$ if $\sigma$ contains no occurrences of $\pi$. We denote the set of permutations in $\Sym_n$ avoiding the pattern $\pi$ by $\Sym_n(\pi)$. Moreover if $\Pi$ is a set of patterns, then we set $\Sym_n(\Pi) = \bigcap_{\pi \in \Pi} \Sym_n(\pi)$.
 
In this paper we shall also need an additional generalization of vincular patterns, allowing us to restrict occurrences to particular value requirements. Let $\upsilon = (\upsilon_1, \dots, \upsilon_m)$ where $\upsilon_i \in \mathbb{N} \sqcup \{ \dsh \}$. Define a \textit{value restricted vincular pattern} to be a triple $(\pi,X,\upsilon)$ where $(\pi,X)$ is a vincular pattern. We use the notation $\restr{(\pi,X)}{\upsilon}$ to denote such a pattern. We say that $a_{i_1}a_{i_2} \cdots a_{i_m}$ is an \textit{occurrence} of $\restr{(\pi,X)}{\upsilon}$ in $\sigma$ if it is an occurrence of the vincular pattern $(\pi,X)$ and $a_{i_j} = \upsilon_j$ whenever $\upsilon_j \in \mathbb{N}$ for $j = 1, \dots, m$. Note in particular that $\restr{(\pi, X)}{(\dsh, \dots, \dsh)} = (\pi,X)$. Every value restricted vincular pattern $\restr{(\pi,X)}{\upsilon}$ gives rise to a permutation statistic (denoted with a bracket around the pattern) counting the number of occurrences of $\restr{(\pi,X)}{\upsilon}$ in $\sigma$ (see Example \ref{vincex}).
\begin{example} \label{vincex}
Let $\sigma = 246153$. 
\begin{center}
 \begin{tabular}{c c c} 
 Pattern $\pi$ & $X$ & Occurrences in $\sigma$ \\
 \hline 
 $231$ & $\emptyset$ & $241,261,461,463,453$ \\ 
 $[231$ & $\{ 0 \}$ & $241,261$ \\
 $\underline{23}1$ & $\{1\}$ & $241, 461, 463$\\
 $2\underline{31}$ & $\{2\}$ & $261,461,453$ \\
 $\underline{231}$ & $\{1,2\}$ & $461$ \\
 $2 \underline{31}]$ & $\{2,3 \}$ & $453$ \\
 $\hspace{28pt}\restr{\underline{23}1}{(\dsh, 6, \dsh)}$ & $\{1\}$ & $461, 463$ \\ [1ex] 
\end{tabular}
\end{center} \noindent \newline
We also have $(231)\sigma = 5$, $[231)\sigma = 2$, $(\underline{23}1)\sigma = 3$, $(2\underline{31})\sigma = 3$, $(\underline{231})\sigma = 1$, $(2\underline{31}]\sigma = 1$ and $\restr{(\underline{23}1)}{(\dsh, 6, \dsh)} \sigma = 2$. On the other hand the permutation $\sigma = 215346$ avoids the pattern $\pi = 231$ (and hence all the patterns in the table above).
\end{example} \noindent
In this paper we mainly study equidistributions of the form
\begin{align} \label{geneq}
\sum_{\sigma \in \Sym_n(\Pi_1)} q^{\stat_1(\sigma)} = \sum_{\sigma \in \Sym_n(\Pi_2)} q^{\stat_2(\sigma)}
\end{align} \noindent
where $\Pi_1,\Pi_2$ are sets of patterns and $\stat_1, \stat_2$ are permutation statistics. We will almost exclusively focus on the case where $\Pi_i$ consists of a single classical pattern of length three and $\stat_i$ is a Mahonian statistic.
Although Mahonian statistics are equidistributed over $\Sym_n$, they need not be equidistributed over pattern avoiding sets of permutations. For instance $\maj$ and $\inv$ are not equidistributed over $\Sym_n(\pi)$ for any classical pattern $\pi \in \Sym_3$. Neither do the existing bijections in the literature for proving equidistribution over $\Sym_n$ necessarily restrict to bijections over $\Sym_n(\pi)$. Therefore whenever such an equidistribution is present, we must usually seek a new bijection which simultaneously preserves statistic and pattern avoidance. Another motivation for studying equidistributions over permutations avoiding a classical pattern of length three, is that $|\Sym_n(\pi)| = C_n$ for all $\pi \in \Sym_3$ where $C_n = \frac{1}{n+1} \binom{2n}{n}$ is the $n$th Catalan number (see \cite{Kit}). Therefore equidistributions of this kind induce equidistributions between statistics on other Catalan objects (and vice versa) whenever we have bijections where the statistics translate in an appropriate fashion. We prove several results in this vein where an exchange between statistics on $\Sym_n(\pi)$, Dyck paths and polyominoes takes place. In general, studying the generating function (\ref{geneq}) provides a rich source of interesting $q$-analogues to well-known sequences enumerated by pattern avoidance and raises new questions about the coefficients of such polynomials.

Equidistributions such as (\ref{geneq}) have been studied in the past. For instance, Burstein and Elizalde proved the following result involving the Mahonian \textit{Denert statistic}
$$
\den(\sigma) = \inv(\Exc(\sigma)) + \inv(\NExc(\sigma)) + \sum_{\substack{i \in [n] \\ \sigma(i) > i}} i,
$$
where $\Exc(\sigma) = (\sigma(i))_{\sigma(i) > i}$ and $\NExc(\sigma) = (\sigma(i))_{\sigma(i) \leq i }$.
\begin{theorem}[Burstein-Elizalde \cite{BE}] \noindent \label{bureli}
For any $n \geq 1$,
$$
\sum_{\sigma \in \Sym_n(231)} q^{\maj(\sigma)} = \sum_{\sigma \in \Sym_n(321)} q^{\den(\sigma)}.
$$
\end{theorem}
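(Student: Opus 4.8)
The plan is to establish the bivariate refinement
\[
\sum_{\sigma \in \Sym_n(231)} q^{\maj \sigma}\, t^{\des \sigma} \;=\; \sum_{\sigma \in \Sym_n(321)} q^{\den \sigma}\, t^{\operatorname{exc} \sigma},
\]
where $\operatorname{exc}\sigma$ is the number of excedances of $\sigma$; specializing $t=1$ recovers the theorem. Write $G_n^{L}(q,t)$ and $G_n^{R}(q,t)$ for the two sides. I would show that both obey $G_0 = 1$ and the recursion
\[
G_n(q,t) \;=\; G_{n-1}(q,t) \;+\; \sum_{k=1}^{n-1} q^{k} t\, G_{k-1}(q,t)\, G_{n-k}(q, q^{k} t),
\]
which, since every term on the right involves only $G_0, \dots, G_{n-1}$, determines the sequence uniquely by induction on $n$.

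The left side is the standard block decomposition of $231$-avoiders. If $\sigma \in \Sym_n(231)$ has the entry $n$ in position $k$, then avoidance of $231$ forces all entries before $n$ to be smaller than all entries after $n$, so $\sigma = \alpha\, n\, \beta$ where $\alpha \in \Sym_{k-1}(231)$ is a permutation of $\{1,\dots,k-1\}$ and $\beta$ is a word on $\{k,\dots,n-1\}$ order-isomorphic to some $\beta' \in \Sym_{n-k}(231)$; conversely any such pair gives a $231$-avoider, since no occurrence of $231$ can straddle the maximal entry $n$. Reading off descents yields $\des\sigma = \des\alpha + 1 + \des\beta'$ and $\maj\sigma = \maj\alpha + k(1 + \des\beta') + \maj\beta'$ when $\beta$ is nonempty, and $(\maj\sigma,\des\sigma) = (\maj\alpha,\des\alpha)$ when $k = n$. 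Summing over $\alpha$ and $\beta'$ produces the recursion above: the factor $q^{k\,\des\beta'}$ is precisely what converts $G_{n-k}(q,t)$ into $G_{n-k}(q, q^{k}t)$, and the case $k = n$ contributes the isolated term $G_{n-1}(q,t)$.

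For the right side, I would first record the reduction that makes $\den$ manageable: if $\sigma \in \Sym_n(321)$ then the subsequences $\Exc(\sigma)$ and $\NExc(\sigma)$ are both increasing — a descent inside either one would, together with a third entry supplied by a short pigeonhole argument on the positions to one side of it, create an occurrence of $321$ — so $\inv(\Exc\sigma) = \inv(\NExc\sigma) = 0$ and $\den\sigma = \sum_{i:\sigma(i) > i} i$ is simply the sum of the excedance positions of $\sigma$. The isolated term $G_{n-1}(q,t)$ should again come from a subfamily that contracts cleanly, namely the $321$-avoiders with $\sigma(n) = n$: deleting that fixed point preserves both $\den$ and $\operatorname{exc}$. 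The remaining case is the step I expect to be the real obstacle: unlike $231$-avoiders, $321$-avoiders do not factor as a direct product of two smaller ones, and $\den$ couples positions with values, so reproducing the convolution with the twisted argument $q^{k}t$ demands a genuinely careful decomposition — say, by the position of $n$, where one exploits that the entries after $n$ are increasing while those before $n$ form a $321$-avoider whose descent tops are constrained, and then tracks how much each block shifts the sum of excedance positions. A cleaner route that sidesteps this bookkeeping is to carry both distributions over to Dyck paths of semilength $n$ — the block decomposition above sends $\Sym_n(231)$ to $\Dyck_n$, and the excedance-based bijection does the same for $\Sym_n(321)$ — and to check that $(\maj,\des)$ and $(\den,\operatorname{exc})$ map onto one and the same pair of statistics on $\Dyck_n$; even then the $231$-side translation is immediate, and translating $\den = \sum(\text{excedance positions})$ on $\Sym_n(321)$ is where the work concentrates.
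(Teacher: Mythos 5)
Your 231-side is fine: the decomposition $\sigma=\alpha\, n\,\beta$ of a $231$-avoider by the position $k$ of $n$, the bookkeeping $\des\sigma=\des\alpha+1+\des\beta'$, $\maj\sigma=\maj\alpha+k(1+\des\beta')+\maj\beta'$, and the resulting recursion with the twisted argument $q^k t$ are all correct, and so is the uniqueness-by-induction principle. The reduction $\den(\sigma)=\sum_{\sigma(i)>i} i$ on $\Sym_n(321)$ (because $\Exc(\sigma)$ and $\NExc(\sigma)$ are forced to be increasing) is also correct; it is exactly the observation the paper quotes from \cite[Theorem 2.5]{BE} in the proof of Proposition \ref{majdeninterp}. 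The bivariate refinement you aim at, with $\des$ on the left matched to the number of excedances on the right, does appear to be true (it checks out for small $n$ and is what the Dyck-path approach of \cite{BE} delivers), so the plan is not wrongheaded.

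The genuine gap is that you stop precisely at the step that carries the theorem's content: you never show that $\sum_{\sigma\in\Sym_n(321)} q^{\den(\sigma)}t^{\operatorname{exc}(\sigma)}$ satisfies the same recursion, nor do you carry out the alternative of transporting $(\den,\operatorname{exc})$ to a Dyck-path statistic and matching it with the image of $(\maj,\des)$; you explicitly defer both ("the real obstacle", "where the work concentrates"). The difficulty is real: $\Sym_n(321)$ does not factor over the position of $n$ (only the suffix after $n$ is forced increasing, and the values before and after $n$ interleave), and since $\den$ is a sum of \emph{positions}, a block decomposition does not automatically produce a convolution $G_{k-1}(q,t)\,G_{n-k}(q,q^k t)$ — this must be engineered, which is essentially what the bijections of Burstein and Elizalde do. Note also that the paper itself gives no proof of this statement: it is cited from \cite{BE}, and later the paper argues in the opposite direction — in Proposition \ref{majdeninterp} and the following Remark, $\maj$ over $\Sym_n(231)$ and $\den$ over $\Sym_n(321)$ are translated via Stump's $\Omega$ and Krattenthaler's $\Gamma$ into $\sum_{v\in\Valley}(\pos-\height)/2$ and $\npea+\sum_{p\in\Peak}(\pos-\height)/2$ respectively, and their equidistribution over $\Dyck_n$ is \emph{deduced from} Theorem \ref{bureli}. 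So to finish along your second route you would still have to construct an explicit bijection on $\Dyck_n$ exchanging those two statistics (or verify the recursion for the $\den$ side directly); as submitted, your text is a correct half-proof plus a plan for the other half.
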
 \noindent
Two sets of patterns $\Pi_1$ and $\Pi_2$ are said to be \textit{Wilf-equivalent} if $|\Sym_n(\Pi_1)| = |\Sym_n(\Pi_2)|$ for all $n \geq 0$. Sagan and Savage \cite{SS} coined a $q$-analogue of this concept. Two sets of patterns $\Pi_1$ and $\Pi_2$ are said to be \textit{st-Wilf equivalent} with respect to the statistic $\st: \Sym \to \NN$ if (\ref{geneq}) holds with $\stat_1 =\st =  \stat_2$ for any $n \geq 0$. 
Let $[\Pi]_{\st}$ denote the $\st$-Wilf class of the set $\Pi$. This concept have been studied explicitly and implicitly at several places in the literature. An overview of the st-Wilf classification of single and multiple classical patterns of length three may be found in the table below.
\vspace{8pt}
\begin{center} 
 \begin{tabular}{c c} 
 $\st$ & Reference \\ \hline 
 $\maj,\inv$ & Dokos-Dwyer-Johnson-Sagan-Selsor \cite{DDJSS} \\ [0.5ex]
 $\charge$ & Killpatrick \cite{Kil} \\ [0.5ex]
 $\text{fp}, \text{exc}, \des$  & Elizalde \cite{Eli, Eli2} \\ [0.5ex]
  $\text{peak},\text{valley}$  & Baxter \cite{Bax} \\ [0.5ex]
 \begin{tabular}{@{}c@{}}$\text{peak}$, $\text{valley}$, $\text{head}$, $\text{last}$, $\text{lir}$, $\text{rir}$, \\ $\lrmin$, $\text{rank}$, $\text{comp}$, $\text{ldr}$\end{tabular}
  & Claesson-Kitaev \cite{CK} \\
\end{tabular}
\end{center} \noindent \newline
In particular it was shown in \cite{DDJSS} that $I_n(132;q) = I_n(213;q) = C_n(q)$ and $I_n(231;q) = I_n(312;q) = \tilde{C}_n(q)$ where
\begin{align*}
I_n(\pi;q) &= \sum_{\sigma \in \Sym_n(\pi)} q^{\inv(\sigma)}, \\
C_n(q) &= \sum_{k=0}^{n-1} q^{(k+1)(n-k)} C_{k}(q)C_{n-k-1}(q), \hspace{5pt} C_0(q) = 1, \\
\tilde{C}_n(q) &= \sum_{k=0}^{n-1} q^{k}\tilde{C}_{k}(q)\tilde{C}_{n-k-1}(q), \hspace{5pt} \tilde{C}_0(q) = 1.
\end{align*}
The polynomial $C_n(q)$ is known as the \textit{Carlitz-Riordan $q$-analogue of the Catalan numbers} and have been studied by numerous authors (though no explicit formula is known). Similar recursions for $\maj$ have been studied in \cite{DDJSS, CEKS}.

To decompose pattern avoiding permutations we will require some effective notation. Given permutations $\tau \in \Sym_k$ and $\sigma_1,\sigma_2 \dots, \sigma_k \in \Sym$, the \textit{inflation} of $\tau$ by $\sigma_1,\sigma_2 \dots, \sigma_k$ is the permutation $\tau[\sigma_1,\sigma_2,\dots , \sigma_k]$ obtained by replacing each entry $\tau(i)$ by a block of length $|\sigma_i|$ order isomorphic to $\sigma_i$ for $i = 1, \dots, k$ such that the blocks are externally order-isomorphic to $\tau$.
\begin{example}
$231[21,1,213] = 546213$.
\end{example} \noindent
Let $\sigma \in \Sym_n$. Recall that the \textit{descent set} of $\sigma$ is given by $\Des(\sigma) = \{ i : \sigma(i) > \sigma(i+1) \}$.
The set of \textit{descent bottoms} and \textit{descent tops} of $\sigma$ are given respectively by $\DB(\sigma) = \{ \sigma(i+1) : i \in \Des(\sigma) \}$ and $\DT(\sigma) = \{ \sigma(i) : i \in \Des(\sigma) \}$. Likewise the \textit{ascent set} of $\sigma$ is given by $\Asc(\sigma) = \{ i : \sigma(i) < \sigma(i+1) \}$ and we define the set of \textit{ascent bottoms} and \textit{ascent tops} of $\sigma$ to be $\AB(\sigma) = \{ \sigma(i) : i \in \Asc(\sigma) \}$ and $\AT(\sigma) = \{ \sigma(i+1) : i \in \Asc(\sigma) \}$ respectively. An entry $\sigma(j)$ is called a \textit{left-to-right maxima} if $\sigma(j) > \sigma(i)$ for all $i < j$. Let $\LRMax(\sigma)$ denote the set of left-to-right maxima in $\sigma$ and let $\lrmax(\sigma) = |\LRMax(\sigma)|$. Similarly an entry $\sigma(j)$ is called a \textit{left-to-right minima} if $\sigma(j) < \sigma(i)$ for all $i < j$. Let $\LRMin(\sigma)$ denote the set of left-to-right minima in $\sigma$ and let $\lrmin(\sigma) = |\LRMin(\sigma)|$. We call $\sigma(i)$ a \textit{peak} if $\sigma(i-1)< \sigma(i) > \sigma(i+1)$ and $\sigma(i)$ a \textit{valley} if $\sigma(i-1)> \sigma(i) < \sigma(i+1)$.

If $\sigma = a_1a_2 \cdots a_{n-1}a_n$, then the \textit{reverse} of $\sigma$ is given by $\sigma^r = a_na_{n-1} \cdots a_2 a_1$ and the \textit{complement} of $\sigma$ by $\sigma^c = (n-a_1+1)(n-a_2+1)\cdots (n-a_{n-1}+1)(n-a_n+1)$. The \textit{inverse} of $\sigma$ (in the group theoretical sense) is denoted by $\sigma^{-1}$. The operations complement, reverse and inverse are often referred to as \textit{trivial bijections} and together they generate a group isomorphic to the Dihedral group $D_4$ of order $8$ acting on $\Sym_n$.  If $\pi$ is a classical pattern and $g \in D_4$, then it is not difficult to see that $\sigma \in \Sym_n(\pi)$ if and only if $\sigma^g \in \Sym_n(\pi^g)$. However if $\pi$ is a non-classical pattern, then there is no such corresponding statement for inverse. Therefore taking the inverse should not be viewed as a \lq trivial bijection\rq \hspace{1pt} in the same sense as complement and reverse when it comes to vincular patterns.

In Table \ref{mafunc} we list the vincular pattern specifications of the Mahonian statistics that we shall consider from \cite{BS}. See the references in Table \ref{mafunc} for the original definitions of these statistics. According to \cite{BS}, Table \ref{mafunc} is the complete list of \textit{Mahonian 3-functions} (up to trivial bijections) i.e. Mahonian statistics that can be written as a sum of vincular pattern statistics of length at most three.
Since some of these statistics have received no conventional name in the literature we will take the liberty of naming them according to the initials of the authors who first proved their Mahonity. 
\begin{center}
\begin{table}
 \begin{tabular}{c c c} 
 Name & Vincular pattern statistic & Reference \\ [0.8ex] \hline 
 $\maj$ & $(1\underline{32}) + (2\underline{31}) + (3\underline{21}) + (\underline{21})$ & MacMahon \cite{Mac} \\ [0.8ex]
 $\inv$ & $(\underline{23}1) + (\underline{31}2) + (\underline{32}1) + (\underline{21})$ & MacMahon \cite{Mac} \\ [0.8ex]
 $\mak$ & $(1\underline{32}) + (\underline{31}2) + (\underline{32}1) + (\underline{21})$  & Foata-Zeilberger \cite{FZ2} \\ [0.8ex]
$\makl$ & $(1\underline{32}) + (2\underline{31}) + (\underline{32}1) + (\underline{21})$ &  Clarke-\Stein-Zeng \cite{CSZ} \\ [0.8ex]
$\mad$  & $(2\underline{31}) + (2\underline{31}) + (\underline{31}2) + (\underline{21})$ & Clarke-\Stein-Zeng \cite{CSZ} \\ [0.8ex]
$\basta$ & $(\underline{13}2) + (\underline{21}3) + (\underline{32}1) + (\underline{21})$ & Babson-\Stein \cite{BS}
\\ [0.8ex]
$\bastb$ & $(\underline{13}2) + (\underline{31}2) + (\underline{32}1) + (\underline{21})$ & Babson-\Stein \cite{BS} \\ [0.8ex]
$\bastc$ & $(1\underline{32}) + (3\underline{12}) + (3\underline{21}) + (\underline{21})$ & Babson-\Stein \cite{BS} \\ [0.8ex]
$\fozea$ & $(\underline{21}3) + (3\underline{21}) + (\underline{13}2) + (\underline{21})$ & Foata-Zeilberger \cite{FZ}
\\ [0.8ex]
$\fozeb$ & $(1 \underline{32}) + (2\underline{31}) + (2\underline{31}) + (\underline{21})$ & Foata-Zeilberger \cite{FZ}
\\ [0.8ex]
$\fozec$ & $(\underline{23}1) + (\underline{31}2) + (\underline{31}2) + (\underline{21})$ & Foata-Zeilberger \cite{FZ}
\\ [0.8ex]
$\sista$ &  $(\underline{13}2) + (\underline{13}2) + (2\underline{13}) + (\underline{21})$ & Simion-Stanton \cite{SS}
\\ [0.8ex]
$\sistb$ &  $(\underline{13}2) + (\underline{13}2) + (2\underline{31}) + (\underline{21})$ & Simion-Stanton \cite{SS}
\\ [0.8ex]
$\sistc$ &  $(\underline{13}2) + (2\underline{31}) + (2\underline{31}) + (\underline{21})$ & Simion-Stanton \cite{SS} 
\end{tabular} \noindent \vspace{10pt}
\caption{Mahonian $3$-functions.}
\label{mafunc}
\end{table}
\end{center} \noindent \newline

\section{Equidistributions via block decomposition}
The equidistributions proved in this section are shown by directly exhibiting a bijection. The bijections are based on standard decompositions of pattern avoiding permutations, or rely on specifying data by which pattern avoiding permutations are uniquely determined. In many cases we are able to find a more refined equidistribution. We begin by proving that $\maj$ and $\mak$ are related via the inverse map over certain pattern avoiding sets of permutations. This may seem unexpected given that vincular patterns do not behave as straightforwardly under the inverse map as they do under complement and reverse. 
\begin{proposition} \label{majmakA}
Let $\sigma \in \Sym_n(\pi)$ where $\pi \in \{132, \thinspace 213,\thinspace 231, \thinspace 312 \}$. Then
$$
\mak(\sigma) = \imaj(\sigma).
$$
Moreover for any $n \geq 1$,
$$
\sum_{\sigma \in \Sym_n(\pi)} q^{\maj(\sigma)} t^{\des(\sigma)} = \sum_{\sigma \in \Sym_n(\pi^{-1})} q^{\mak(\sigma)} t^{\des(\sigma)}.
$$
\end{proposition}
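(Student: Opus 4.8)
The plan is to reduce both assertions to the single pattern $132$ together with the inverse bijection. Each $\pi\in\{132,213,231,312\}$ is classical and $\pi^{-1}$ again lies in this set, so $\sigma\mapsto\sigma^{-1}$ is a bijection $\Sym_n(\pi)\to\Sym_n(\pi^{-1})$; writing $\iDes(\sigma)=\{i:\ i+1\text{ lies to the left of }i\text{ in }\sigma\}=\Des(\sigma^{-1})$, one has $\imaj(\sigma)=\sum_{i\in\iDes(\sigma)}i$. Granting the pointwise identity $\mak=\imaj$ on all four classes, together with the auxiliary fact $\des(\sigma)=\des(\sigma^{-1})$ on them, the refined equidistribution is then immediate: for $\sigma\in\Sym_n(\pi)$ one has $\maj(\sigma)=\imaj(\sigma^{-1})=\mak(\sigma^{-1})$ by the first identity applied to $\sigma^{-1}\in\Sym_n(\pi^{-1})$, while $\des(\sigma)=\des(\sigma^{-1})$, so $\sigma\mapsto\sigma^{-1}$ carries the pair $(\maj,\des)$ on $\Sym_n(\pi)$ to the pair $(\mak,\des)$ on $\Sym_n(\pi^{-1})$. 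Thus the whole problem comes down to proving $\mak(\sigma)=\imaj(\sigma)$ and $\des(\sigma)=|\iDes(\sigma)|$ for $\sigma\in\Sym_n(132)$, and then transferring these to the other three classes by trivial bijections.

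For the $132$ case I would first simplify $\mak$: an occurrence of the vincular pattern $1\underline{32}$ is in particular an occurrence of the classical pattern $132$, so $(1\underline{32})\sigma=0$, and combining the length-two term $(\underline{21})\sigma=\des(\sigma)$ with the two remaining length-three terms $(\underline{31}2)\sigma$ and $(\underline{32}1)\sigma$ of Table~\ref{mafunc} (each recording a descent at some position $i$ together with a strictly smaller entry further to the right) one gets
\[
\mak(\sigma)=(\underline{21})\sigma+(\underline{31}2)\sigma+(\underline{32}1)\sigma=\sum_{i\in\Des(\sigma)}\#\{\,j>i:\ \sigma(j)<\sigma(i)\,\}.
\]
Then I would apply the standard block decomposition of a $132$-avoider: if $n=\sigma(k)$, then $\sigma=\beta\, n\,\gamma$, where $\beta$ fills positions $1,\dots,k-1$ with the $k-1$ largest values and $\gamma$ fills positions $k+1,\dots,n$ with the $n-k$ smallest values; writing $\beta,\gamma$ also for the corresponding standardized permutations, these again avoid $132$. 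A direct computation from the two combinatorial descriptions yields the matching recursions
\begin{align*}
\mak(\sigma)&=\mak(\beta)+\mak(\gamma)+|\gamma|\bigl(\des(\beta)+[\gamma\neq\varnothing]\bigr),\\
\imaj(\sigma)&=\imaj(\beta)+\imaj(\gamma)+|\gamma|\bigl(|\iDes(\beta)|+[\gamma\neq\varnothing]\bigr),
\end{align*}
(with $[\,\cdot\,]$ the Iverson bracket), together with $\des(\sigma)=\des(\beta)+\des(\gamma)+[\gamma\neq\varnothing]$ and $|\iDes(\sigma)|=|\iDes(\beta)|+|\iDes(\gamma)|+[\gamma\neq\varnothing]$. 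Induction on $n$ now finishes: the last two recursions give $\des=|\iDes|$ on $\Sym_n(132)$, and then $\des(\beta)=|\iDes(\beta)|$ makes the first two recursions agree, giving $\mak=\imaj$ on $\Sym_n(132)$.

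To pass to $\pi\in\{213,231,312\}$ I would use the trivial bijections. Here $\Sym_n(231)=\{\sigma^r:\sigma\in\Sym_n(132)\}$, and reversal obeys $\imaj(\sigma^r)=\maj((\sigma^{-1})^c)=\binom{n}{2}-\imaj(\sigma)$ and $\mak(\sigma^r)=\binom{n}{2}-\mak(\sigma)$ — the latter because $\mak$ and $\inv$ differ only in the summand $(1\underline{32})$ versus $(\underline{23}1)$ of Table~\ref{mafunc}, a pair that reversal interchanges, while $\inv(\sigma^r)=\binom{n}{2}-\inv(\sigma)$ — so both identities transfer to $\Sym_n(231)$; inverse-invariance of $\des$ transfers too, using $\des(\sigma^r)=\des(\sigma^c)=n-1-\des(\sigma)$ and $(\sigma^r)^{-1}=(\sigma^{-1})^c$, $(\sigma^c)^{-1}=(\sigma^{-1})^r$. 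The classes $213$ and $312$ are handled analogously, via the complement (or the reverse-complement) and the corresponding transformation laws for $\mak$ and $\maj$. Finally, the displayed generating-function identity of the proposition follows from the first paragraph.

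The main obstacle is the bookkeeping in the block-decomposition induction: one must treat the degenerate cases $\beta=\varnothing$ and $\gamma=\varnothing$ separately, and, above all, account correctly for the value $n$ sitting between the two blocks — it contributes to $\Des(\sigma)$ exactly when $\gamma\neq\varnothing$, and to $\iDes(\sigma)$ in a way that links the top of $\gamma$ to the bottom of $\beta$. A related subtlety is that the induction for $\mak=\imaj$ does not close by itself: it needs the descent identity $\des=|\iDes|$ on smaller permutations as an input, so the two identities must be proved together (the descent identity first). The reduction to the remaining three pattern classes is routine except that $\mak$ transforms less simply under the complement than under the reverse, so the passage to $\Sym_n(213)$ needs either one more short vincular identity or a separate decomposition argument.
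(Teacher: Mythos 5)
Your treatment of the core case $\Sym_n(132)$ is correct and is essentially the paper's own argument: both decompose at the position of $n$ (the paper writes this as $\sigma=231[\sigma_1,1,\sigma_2]$, or $21[\sigma_1,\sigma_2]$ when $\sigma$ does not end in $n$), derive matching recursions, and run an induction that must carry the auxiliary identity $\des(\sigma)=\des(\sigma^{-1})$ along; your only deviation is that you compute $\imaj$ directly from inverse descents rather than applying the $\maj$-recursion to $\sigma^{-1}$, which is a cosmetic difference. Your transfer to $231$ by reversal is also correct and is a genuinely nice shortcut the paper does not use: since $\mak-\inv=(1\underline{32})-(\underline{23}1)$ and reversal exchanges these two pattern statistics while $\inv(\sigma^r)=\binom{n}{2}-\inv(\sigma)$, one indeed gets $\mak(\sigma^r)=\binom{n}{2}-\mak(\sigma)$, matching $\imaj(\sigma^r)=\binom{n}{2}-\imaj(\sigma)$.

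The genuine gap is the cases $\pi\in\{213,312\}$, which you propose to handle ``analogously, via the complement (or the reverse-complement)''. There is no transformation law for $\mak$ under complementation that could make this work: complementation sends the summands of $\mak$ to $(3\underline{12})+(\underline{13}2)+(\underline{12}3)+(\underline{12})$, and already for $n=3$ one has $\mak(321)=3$ while $\mak(123)=0$, and $\mak(231)=\mak(213)=1$ while $\binom{3}{2}-\mak(231)=2$, so $\mak(\sigma^c)$ is neither $\mak(\sigma)$ nor $\binom{n}{2}-\mak(\sigma)$; the same obstruction hits the reverse-complement route. You acknowledge this in your closing paragraph but leave it unresolved (``one more short vincular identity or a separate decomposition argument''), so as written the identity $\mak=\imaj$ is only established on $\Sym_n(132)$ and $\Sym_n(231)$. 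This is not a harmless omission: the displayed equidistribution for $\pi=231$ needs $\mak=\imaj$ on $\Sym_n(312)$ (and $\pi=213$ needs it on $\Sym_n(213)$), so half of the ``moreover'' statement is unproved. The repair is what the paper tacitly does when it says the remaining patterns are ``proved similarly'': run the same block-decomposition induction inside each class, e.g.\ decomposing $\sigma\in\Sym_n(213)$ at the position of $1$ as $\sigma=A\,1\,B$ with every letter of $A$ larger than every letter of $B$, and similarly for $\Sym_n(312)$, and verify the analogous recursions for $\mak$, $\imaj$ and $\des$; this is routine but must actually be carried out, since no trivial-bijection transfer from the $132$ case supplies it.
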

\begin{proof}
Let $\sigma \in \Sym_n(132)$. We argue by induction on $n$. Consider the inflation form $\sigma = 231[\sigma_1,1,\sigma_2]$ where $\sigma_1 \in \Sym_k(132)$ and $\sigma_2 \in \Sym_{n-k-1}(132)$. Suppose $\sigma_2 = \emptyset$. Then $\sigma$ and hence $\sigma^{-1}$ ends with $n$. Since neither $\maj$ nor $\mak$ counts any occurrences ending with $n$ we may restrict attention to $\sigma_1$, so by induction
$$
\mak(\sigma) = \mak(\sigma_1) = \imaj(\sigma_1) = \imaj(\sigma).
$$
We may therefore consider a decomposition $\sigma = 21[\sigma_1,\sigma_2]$ where $\sigma_1,\sigma_2 \neq \emptyset$. Then $\maj$ and $\mak$ satisfy the recursions
\begin{align*}
\displaystyle \maj(\sigma) &= |\sigma_1|+ |\sigma_1|\des(\sigma_2) + \maj(\sigma_1) + \maj(\sigma_2), \\
\mak(\sigma) &= |\sigma_2|+ |\sigma_2|\des(\sigma_1) + \mak(\sigma_1) + \mak(\sigma_2),
\end{align*} \noindent
respectively.
By an inductive argument it is easy to see that $\des(\tau^{-1}) = \des(\tau)$ for all $\tau \in \Sym(132)$ by comparing the decompositions $\tau^{-1} = 231[\tau_2^{-1},1, \tau_1^{-1}]$ and $\tau = 231[\tau_1,1, \tau_2]$.
Hence by induction
\begin{align*}
\displaystyle \imaj(\sigma) &= \maj(21[\sigma_2^{-1},\sigma_1^{-1}]) \\ &= |\sigma_2^{-1}| + |\sigma_2^{-1}| \des(\sigma_1^{-1}) + \maj(\sigma_1^{-1}) + \maj(\sigma_2^{-1}) \\ &= |\sigma_2| + |\sigma_2| \des(\sigma_1) + \mak(\sigma_1) + \mak(\sigma_2) \\ &= \mak(\sigma).
\end{align*} \noindent
The statement is proved similarly for remaining patterns and are therefore omitted.
\end{proof} \noindent
\begin{remark}
By Proposition \ref{majmakA} and \cite[Corollary 4.1]{Stu} it follows that
\begin{equation} \label{maccat}
\sum_{\sigma \in \Sym_n(231)} q^{\maj(\sigma) + \mak(\sigma)} = \frac{1}{[n+1]_q} {2n \brack n}_q
\end{equation} \noindent
where ${n \brack k}_q = \frac{[n]_q!}{[n-k]_q![k]_q!}$. The right hand side of (\ref{maccat}) is known as \textit{MacMahon's $q$-analogue of the Catalan numbers} \cite{Mac2}.
\end{remark}

The following lemma regarding the structure of $\Sym_n(321)$ is part of folklore pattern avoidance (see e.g. \cite{Kit}).
\begin{lemma} \label{321decomp}
We have $\sigma \in \Sym_n(321)$ if and only if the elements of $[n] \setminus \LRMax(\sigma)$ form an increasing subsequence of $\sigma$.
\end{lemma}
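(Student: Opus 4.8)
The plan is to prove both implications by a direct contradiction argument, using only the definitions of left-to-right maxima and of pattern containment; no induction or decomposition is needed.

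First I would prove the forward direction. Assume $\sigma \in \Sym_n(321)$ and suppose, for contradiction, that the elements of $[n] \setminus \LRMax(\sigma)$ do not form an increasing subsequence. Then there exist positions $i < j$ with $\sigma(i), \sigma(j) \notin \LRMax(\sigma)$ and $\sigma(i) > \sigma(j)$. Since $\sigma(i)$ is not a left-to-right maximum, there is some position $k < i$ with $\sigma(k) > \sigma(i)$. But then $k < i < j$ and $\sigma(k) > \sigma(i) > \sigma(j)$, so $\sigma(k)\sigma(i)\sigma(j)$ is an occurrence of $321$ in $\sigma$, contradicting $\sigma \in \Sym_n(321)$.

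Next I would prove the converse. Assume the elements of $[n] \setminus \LRMax(\sigma)$ form an increasing subsequence of $\sigma$, and suppose for contradiction that $\sigma$ contains $321$, say at positions $i < j < k$ with $\sigma(i) > \sigma(j) > \sigma(k)$. Since $i < j$ and $\sigma(i) > \sigma(j)$, the entry $\sigma(j)$ is not a left-to-right maximum; since $i < k$ and $\sigma(i) > \sigma(k)$, the entry $\sigma(k)$ is not a left-to-right maximum. Thus $\sigma(j)$ and $\sigma(k)$ both lie in $[n] \setminus \LRMax(\sigma)$ with $j < k$ but $\sigma(j) > \sigma(k)$, contradicting the assumption that these elements form an increasing subsequence. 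Hence $\sigma \in \Sym_n(321)$.

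I do not anticipate any real obstacle here: the only thing to be careful about is the bookkeeping of which of the three entries in a putative $321$ occurrence is forced to be a non-left-to-right-maximum, and conversely that a non-left-to-right-maximum entry always comes with a strictly larger entry to its left, which is exactly what furnishes the third element of the $321$ pattern in the first implication.
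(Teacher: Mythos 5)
Your proof is correct. The paper does not actually prove Lemma \ref{321decomp} --- it is stated as folklore with a reference to \cite{Kit} --- and your two contradiction arguments are exactly the standard justification: a descent among the non-left-to-right-maxima together with the larger entry witnessing non-maximality yields a $321$, and conversely the last two entries of any $321$ occurrence are non-left-to-right-maxima appearing in decreasing order. Nothing is missing; the bookkeeping you flag (which entries of a $321$ are forced to be non-maxima, and the strictly larger entry to the left of a non-maximum) is handled correctly in both directions.
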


\begin{theorem} \label{majmakB}
For any $n \geq 1$,
\begin{align*}
\displaystyle \sum_{\sigma \in \Sym_n(321)} q^{\maj(\sigma)} \xx^{\DB(\sigma)} \yy^{\DT(\sigma)} &= \sum_{\sigma \in \Sym_n(321)} q^{\mak(\sigma)} \xx^{\DB(\sigma)} \yy^{\DT(\sigma)}, \\  \sum_{\sigma \in \Sym_n(123)} q^{\maj(\sigma)} \xx^{\AB(\sigma)} \yy^{\AT(\sigma)} &= \sum_{\sigma \in \Sym_n(123)} q^{\mak(\sigma)} \xx^{\AB(\sigma)} \yy^{\AT(\sigma)}.
\end{align*}
\end{theorem}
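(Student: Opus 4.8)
The plan is to prove the first identity fiberwise: for each pair of sets $(B,T)$ I will exhibit an involution $\Phi$ on the fiber
\[
F_{T,B}:=\{\,\sigma\in\Sym_n(321):\DB(\sigma)=B,\ \DT(\sigma)=T\,\}
\]
with $\maj(\Phi\sigma)=\mak(\sigma)$; reindexing the fiber sum by $\Phi$ and then summing over $(B,T)$ (the monomial $\xx^{\DB(\sigma)}\yy^{\DT(\sigma)}$ being constant on each fiber) gives the result. The second identity follows from the analogous dual construction, with left-to-right minima, ascents, ascent bottoms/tops and decreasing runs replacing left-to-right maxima, descents, descent bottoms/tops and increasing runs.

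First I would describe $F_{T,B}$ explicitly. Given $\sigma\in\Sym_n(321)$, put $L=\LRMax(\sigma)$, $N=[n]\setminus L$, $T=\DT(\sigma)=\{t_1<\cdots<t_k\}$, $B=\DB(\sigma)=\{b_1<\cdots<b_k\}$ (so $k=\des(\sigma)$). By Lemma \ref{321decomp} the entries of $N$ occur left to right in increasing order, and a descent top is always a left-to-right maximum (an earlier larger entry would complete a $321$), so $\sigma$ decomposes uniquely into increasing runs $\sigma=R_1R_2\cdots R_{k+1}$ with $\max R_i=t_i$, $\min R_{i+1}=b_i$ and $t_i>b_i$. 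Since $R_i\cap L$ and $R_i\cap N$ are the consecutive blocks of $L$, resp. $N$, delimited by the $t_j$'s, resp. the $b_j$'s, the permutation $\sigma$ is determined by $L$ together with $(B,T)$. Carrying out this reconstruction shows that $\sigma\mapsto L$ is a bijection from $F_{T,B}$ onto the family of all $L\subseteq[n]$ with $T\subseteq L$, $L\cap B=\varnothing$ and $E\subseteq L$, where
\[
E:=[1,b_1)\ \cup\ (t_k,n]\ \cup\ \bigcup_{i=2}^{k}(t_{i-1},b_i)
\]
(which is disjoint from $T\cup B$) collects the entries forced to be left-to-right maxima; there is no further constraint. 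Hence this family is closed under complementing $L$ inside the set of remaining ``free'' entries $[n]\setminus(T\cup B\cup E)$; moreover every free entry lies in some interval $(b_j,t_j)$ whereas every entry of $E$ lies in none.

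Using the vincular descriptions in Table \ref{mafunc}, the vanishing of $(\underline{32}1)\sigma$ on $\Sym_n(321)$, and the run decomposition, one checks that
\[
\maj(\sigma)=\sum_{t\in T}t-k-S(\sigma),\qquad \mak(\sigma)=\sum_{b\in B}b+S(\sigma),
\]
where $S(\sigma):=\sum_{x\in N}d(x)$ and $d(x):=\#\{\,j:b_j<x<t_j\,\}$. Thus $\maj+\mak$ is constant on $F_{T,B}$, and since $\sum_{t\in T}t-\sum_{b\in B}b-k$ equals the fiber constant $\Delta:=\sum_{x\in[n]}d(x)=\sum_{j}(t_j-b_j-1)$, it is enough to find an involution $\Phi$ of $F_{T,B}$ with $S(\Phi\sigma)=\Delta-S(\sigma)$, since that forces $\maj(\Phi\sigma)=\mak(\sigma)$. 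I take $\Phi$ to be the map fixing $B$ and $T$ that replaces $\LRMax(\sigma)$ by its complement among the free entries (concretely, a reshuffling of the runs $R_1,\dots,R_{k+1}$); by the previous paragraph $\Phi$ is a well-defined involution of $F_{T,B}$. As the entries of $E$ contribute $0$ to $d$, a direct count gives $S(\sigma)+S(\Phi\sigma)=2\sum_{b\in B}d(b)+\sum_{x\ \text{free}}d(x)$, so $S(\Phi\sigma)=\Delta-S(\sigma)$ reduces to $\sum_{t\in T}d(t)=\sum_{b\in B}d(b)$; and both of these count the pairs $i<j$ with $b_j<t_i$, hence agree.

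The step I expect to be the real work — everything else being bookkeeping once the run decomposition is in place — is the structural claim in the second paragraph: that $\sigma\mapsto\LRMax(\sigma)$ identifies $F_{T,B}$ with ``$E$ together with an arbitrary subset of the free entries'', with no hidden inequalities and with no free entry inside a descent gap ever forced to be (or not be) a left-to-right maximum. That is precisely what guarantees that toggling the left-to-right-maximum status of the free entries keeps $\sigma$ in $\Sym_n(321)$ without disturbing $\DB$ or $\DT$, and hence that $\Phi$ is well defined and involutive.
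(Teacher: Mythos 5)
Your proposal is correct and is essentially the paper's own proof: your toggling of the left-to-right-maximum status of the free entries with $\DT$ and $\DB$ held fixed is exactly the involution $\phi$ of the paper (which swaps $\LRMax(\sigma)\setminus\DT(\sigma)$ with $[n]\setminus(\LRMax(\sigma)\cup\DB(\sigma))$), and your relation $S(\sigma)+S(\Phi\sigma)=\Delta$ is just a repackaging of the paper's key step $(2\underline{31})\sigma=(\underline{31}2)\phi(\sigma)$ together with the identities $\mak=\sum_{b\in\DB}b+(\underline{31}2)$ and $\maj=\sum_{b\in\DB}b+(2\underline{31})$ on $\Sym_n(321)$. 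Like the paper, you dismiss the $\Sym_n(123)$ case as the analogous dual construction, so the two arguments agree in both substance and level of detail.
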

\begin{proof}
Let $\sigma \in \Sym_n(321)$. By Lemma \ref{321decomp} we may decompose $\sigma$ as
$$
\sigma = u_1v_1u_2v_2 \cdots u_t v_t,
$$
where $u_1, \dots, u_t$ are non-empty factors of left-to-right maxima in $\sigma$ and $v_1, \dots, v_t$ are non-empty factors (except possibly $v_t$) such that $v_1v_2\cdots v_t$ is an increasing subword. Assume first that $v_t \neq \emptyset$.
Let $M_i = \max(u_i)$ and $m_i = \min(v_i)$ for $i = 1,\dots, t$. 
Clearly $\DB(\sigma) = \{ m_i : 1 \leq i \leq t \}$ and $\DT(\sigma) = \{ M_i : 1 \leq i \leq t \}$. 
Let $\bar{u}_i = u_i \setminus M_i$ and $\bar{v}_i = v_i \setminus m_i$ for $i = 1, \dots, t$. Write $\bar{u} = \bar{u}_1 \cdots \bar{u}_t$ and $\bar{v} = \bar{v}_1 \cdots \bar{v}_t$.

We now define an involution 
\begin{align} \label{majmakinvolution}
\phi:\Sym_n(321) \to \Sym_n(321)
\end{align} \noindent
such that $\maj(\phi(\sigma)) = \mak(\sigma)$, preserving all pairs of descent top and descent bottoms. 
For convenience, set $M_0 = -\infty$ and $M_{t+1} = \infty$.
Let $u_k'$ denote the unique increasing word of the letters in the set
\begin{align*}
\displaystyle \left \{ \alpha \in \bar{v} : M_{k-1} < \alpha < M_k \right \},
\end{align*} \noindent
with $M_k$ adjoined at the end and let $v_k'$ denote the unique increasing word of the letters in the set
\begin{align*}
\displaystyle \left \{ \beta \in \bar{u} : m_{k} < \beta < M_{k+1} \right \},
\end{align*} \noindent
with $m_k$ adjoined at the beginning for $k = 1, \dots, t$.
Define 
$$
\phi(\sigma) = \begin{cases} u_1'v_1'  \cdots u_t' v_t' & \text{ if } v_t \neq \emptyset \\ \phi(u_1v_1 \cdots u_{t-1}v_{t-1})u_t & \text{ if } v_t = \emptyset \end{cases}.
$$
Thus $\phi$ effectively swaps $\bar{u} = \LRMax(\sigma)\setminus \DT(\sigma)$ with $\bar{v} = [n]\setminus (\LRMax(\sigma) \cup \DB(\sigma))$ (when $v_t \neq \emptyset$) and $\DB(\phi(\sigma)) = \DB(\sigma)$, $\DT(\phi(\sigma)) = \DT(\sigma)$. Hence $\phi$ is an involution. 
We have
\begin{align*}
\displaystyle (2\underline{31}) \sigma  &= \sum_{\beta \in \bar{u}} \restr{(2 \underline{31})}{(\beta, \dsh,  \dsh)} \sigma \\ &= \sum_{\beta \in \bar{u}} \left ( \max \{k:  m_k < \beta  \} - \min \{k:  M_k > \beta  \} + 1 \right ) \\ &= \sum_{\beta \in \bar{u}} \left ( \max\{k: m_k < \phi(\beta)  \} - \min \{k:  M_k > \phi(\beta)  \} + 1 \right ) \\ &= \sum_{\beta \in \bar{u}} \restr{(\underline{31}2)}{(\dsh, \dsh,\phi(\beta) )} \phi(\sigma) \\ &= (\underline{31}2) \phi(\sigma),
\end{align*} \noindent
since under the involution $\phi$, each $\beta \in \LRMax(\sigma)\setminus \DT(\sigma)$ precisely passes the number of descent bottoms that are less than it to its right. Therefore $\beta$ is involved in the same number of $2\underline{31}$ occurrences in $\sigma$ as $\phi(\beta)$ is involved in $\underline{31}2$ occurrences in $\phi(\sigma)$.
Hence
\begin{align*}
\displaystyle \mak(\phi(\sigma)) &= ((1\underline{32}) + (\underline{32}1) + (\underline{21}))\phi(\sigma) + (\underline{31}2) \phi(\sigma) \\ &= \sum_{\alpha \in \DB(\phi(\sigma))} \alpha + (\underline{31}2) \phi(\sigma) \\ &= \sum_{\alpha \in \DB(\sigma)} \alpha + (2\underline{31}) \sigma \\ &= \maj(\sigma).
\end{align*} \noindent
The statement is proved analogously over $\Sym(123)$.
\end{proof}
\begin{example}
Let $\phi$ be the involution (\ref{majmakinvolution}) in Theorem \ref{majmakB} and let $\sigma = 341625978 \in \Sym_9(321)$. Then
$$
\color{red}{3}\color{black}{41}\color{black}{62}\color{blue}{5}\color{black}{97}\color{blue}{8} \color{black}{\mylongmapsto{$\phi$}} \color{black}{41} \color{red}{5} \color{black}{62}\color{blue}{3} \color{red}{8} \color{black}{97} \color{black}{,}
$$ \noindent
where the black letters indicate the fixed pairs of descent tops and descent bottoms, red letters denote non-descent top left-to-right maxima and blue letters denote non-descent bottom non-left-to-right maxima. The involution swaps the role of red and blue letters while keeping consecutive pairs of black letters together in the same relative order.
\end{example}

\begin{proposition} \label{makWilf}
We have
\begin{align*}
\displaystyle [123]_{\mak} &= \{ 123 \}, \\ [321]_{\mak} &= \{ 321 \}, \\ [132]_{\mak} &= \{ 132,\thinspace 312 \}  = [312]_{\mak}, \\ [2 13]_{\mak} &= \{ 213,\thinspace 231 \}  = [2 31]_{\mak}.
\end{align*} 
\end{proposition}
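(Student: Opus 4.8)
The plan is to reduce the $\mak$-Wilf classification entirely to the known $\maj$-Wilf classification of single classical patterns of length three, which is due to Dokos--Dwyer--Johnson--Sagan--Selsor \cite{DDJSS} and recorded in the table of Section 1: the $\maj$-Wilf classes are $\{123\}$, $\{321\}$, $\{132, 231\}$ and $\{213, 312\}$, and these four classes are pairwise distinct already for $n=3$. Everything else is bookkeeping on top of Proposition \ref{majmakA} and Theorem \ref{majmakB}.

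First I would record, for each $\pi \in \Sym_3$, which $\maj$-distribution the $\mak$-distribution over $\Sym_n(\pi)$ agrees with. For $\pi \in \{123, 321\}$, Theorem \ref{majmakB} with all $\xx$- and $\yy$-variables set to $1$ gives directly $\sum_{\sigma \in \Sym_n(\pi)} q^{\mak(\sigma)} = \sum_{\sigma \in \Sym_n(\pi)} q^{\maj(\sigma)}$. For $\pi \in \{132, 213, 231, 312\}$, Proposition \ref{majmakA} gives $\mak(\sigma) = \imaj(\sigma) = \maj(\sigma^{-1})$ for every $\sigma \in \Sym_n(\pi)$; since $\pi$ is a classical pattern the inverse map restricts to a bijection $\Sym_n(\pi) \to \Sym_n(\pi^{-1})$, so
$$
\sum_{\sigma \in \Sym_n(\pi)} q^{\mak(\sigma)} = \sum_{\sigma \in \Sym_n(\pi)} q^{\maj(\sigma^{-1})} = \sum_{\tau \in \Sym_n(\pi^{-1})} q^{\maj(\tau)}.
$$
Using $132^{-1}=132$, $213^{-1}=213$, $231^{-1}=312$, $312^{-1}=231$, this shows that the assignment sending a pattern $\pi$ to the pattern $\rho$ for which the $\mak$-distribution over $\Sym_n(\pi)$ equals the $\maj$-distribution over $\Sym_n(\rho)$ is exactly the involution $\iota$ of $\Sym_3$ transposing $231$ and $312$ and fixing $123, 321, 132, 213$.

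Hence $\pi$ and $\rho$ are $\mak$-Wilf equivalent if and only if $\iota(\pi)$ and $\iota(\rho)$ are $\maj$-Wilf equivalent, i.e. $[\pi]_{\mak} = \iota\bigl([\iota(\pi)]_{\maj}\bigr)$ (using $\iota^2 = \mathrm{id}$). Applying $\iota$ to the four $\maj$-Wilf classes $\{123\}$, $\{321\}$, $\{132, 231\}$, $\{213, 312\}$ produces $\{123\}$, $\{321\}$, $\{132, 312\}$, $\{213, 231\}$, which are precisely the classes in the statement; in particular $[132]_{\mak} = [312]_{\mak} = \{132,312\}$ and $[213]_{\mak} = [231]_{\mak} = \{213,231\}$. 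Distinctness of these four $\mak$-classes — hence the impossibility of enlarging any of them — is inherited from the distinctness of the four $\maj$-classes, again witnessed at $n=3$.

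There is no hard step once Proposition \ref{majmakA} and Theorem \ref{majmakB} are available; the only subtlety worth flagging is the inverse bookkeeping — the $\mak$-classes genuinely differ from the $\maj$-classes because passing to inverses interchanges $231$ and $312$ — and the fact that the required input from \cite{DDJSS} is the $\maj$-Wilf classification rather than the $\inv$-one, the latter being $\{123\},\{321\},\{132,213\},\{231,312\}$, which is different.
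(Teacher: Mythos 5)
Your proposal is correct, and for the two nontrivial equivalences it is the same argument as the paper's: combine Proposition \ref{majmakA} (via the inverse map, $\mak$ on $\Sym_n(\pi)$ has the $\maj$-distribution on $\Sym_n(\pi^{-1})$) with the $\maj$-Wilf equivalences $[132]_{\maj}=[231]_{\maj}$ and $[213]_{\maj}=[312]_{\maj}$ from \cite{DDJSS}, exactly as in the chain $\sum_{\Sym_n(132)}q^{\mak}=\sum_{\Sym_n(132)}q^{\maj}=\sum_{\Sym_n(231)}q^{\maj}=\sum_{\Sym_n(312)}q^{\mak}$ used in the paper. Where you diverge is in establishing that the four classes are distinct: the paper simply notes that the inequivalences are easily verified by hand or computer, whereas you invoke Theorem \ref{majmakB} (with $\xx=\yy=1$) to identify the $\mak$-distribution with the $\maj$-distribution also over $\Sym_n(123)$ and $\Sym_n(321)$, so that the whole $\mak$-classification, including distinctness, is the image of the $\maj$-classification under the involution swapping $231$ and $312$; distinctness of the $\maj$-classes is then a known fact (and indeed visible already at $n=3$). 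This is a slightly more systematic way to dispose of the completeness step, at the cost of using Theorem \ref{majmakB}, which the paper's proof does not need; both routes are sound.
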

\begin{proof}
As shown in \cite[Theorem 2.6]{DDJSS} the map $\phi : \Sym_n(132) \to \Sym_n(231)$ recursively defined by 
$$
\phi(231[\sigma_1,1,\sigma_2]) = 132[\phi(\sigma_1),1,\phi(\sigma_2)],
$$
is a descent preserving bijection implying that $[132]_{\maj} = [231]_{\maj}$. Thus by Proposition \ref{majmakA} we have
$$
\sum_{\sigma \in \Sym_n(132)}q^{\mak(\sigma)} = \sum_{\sigma \in \Sym_n(132)}q^{\maj(\sigma)} = \sum_{\sigma \in \Sym_n(231)}q^{\maj(\sigma)} = \sum_{\sigma \in \Sym_n(312)}q^{\mak(\sigma)}.
$$
Hence $[132]_{\mak} = [312]_{\mak}$. The remaining $\mak$-Wilf equivalence is proved similarly invoking Proposition $\ref{majmakA}$. The inequivalences between the four classes is easily verified by hand or with computer.
\end{proof}
\begin{remark}
The charge statistic is also a Mahonian statistic related to $\maj$ via trivial bijections by
$\maj(\sigma) = \charge(((\sigma^r)^c)^{-1})$ (see \cite{Kil}).
It is worth noting that the $\mak$-Wilf classes in Proposition \ref{makWilf} coincide with the $\charge$-Wilf classes identified in \cite{Kil}.
\end{remark} \noindent
\begin{remark}
It can be checked that $\maj$, $\inv$ and $\mak$ are the only statistics in Table \ref{mafunc} with non-singleton $\st$-Wilf classes for single classical patterns of length three.
\end{remark}

The bijection (\ref{majmakinvolution}) in Theorem \ref{majmakB} induces an interesting equidistribution on shortened polyominoes. A \textit{shortened polyomino} is a pair $(P,Q)$ of $N$ (north), $E$ (east) lattice paths $P = (P_i)_{i=1}^n$ and $Q = (Q_i)_{i=1}^n$ satisfying 
\begin{enumerate}
\item $P$ and $Q$ begin at the same vertex and end at the same vertex.
\item $P$ stays weakly above $Q$ and the two paths can share $E$-steps but not $N$-steps.
\end{enumerate} \noindent
Denote the set of shortened polyominoes with $|P| = |Q| = n$ by $\mathcal{H}_n$.
For $(P,Q) \in \mathcal{H}_n$, let 
$
\Proj^Q_{P}(i)
$ 
denote the step $j \in [n]$ of $P$ that is the \textit{projection of the $i^{\text{th}}$ step of $Q$ on $P$}. Let
\begin{align*}
\Valley(Q) &= \{ i: Q_iQ_{i+1} = EN \}
\end{align*} \noindent
denote the set of indices of the \textit{valleys} in $Q$ and let $\val(Q) = |\Valley(Q)|$. Moreover for each $i \in [n]$ define
$$
\area_{(P,Q)}(i) = \# \text{squares between the } i^{\text{th}} \text{ step of } Q \text{ and  the } j^{\text{th}} \text{ step of } P,
$$
where $j = \Proj^Q_{P}(i)$.
Consider the statistics \textit{valley-column area} and \textit{valley-row area} of $(P,Q)$ given by 
\begin{align*}
\displaystyle \vcarea(P,Q) &= \sum_{i \in \Valley(Q)} \area_{(P,Q)}(i), \\ \vrarea(P,Q) &= \sum_{i \in \Valley(Q)} \area_{(P,Q)}(i+1).
\end{align*} \noindent

\begin{center}
\begin{tikzpicture}[scale=0.65]

\fill [pink] (1,0) rectangle (2,2);
\fill [pink] (3,1) rectangle (4,4);
\fill [pink] (6,4) rectangle (7,6);

\draw [very thick] (0,0)--(0,2)--(2,2)--(2,3)--(3,3)--(3,4)--(4,4)--(4,6)--(7,6);
\draw [very thick] (0,0)--(2,0)--(2,1)--(4,1)--(4,4)--(7,4)--(7,6);
\draw (1,0)--(1,2); \draw (0,1)--(2,1); \draw (2,1)--(2,2); \draw
(2,2)--(4,2); \draw (3,1)--(3,3); \draw (3,4)--(4,4); \draw (3,3)--(4,3); \draw (4,5)--(7,5); \draw (5,6)--(5,4); \draw (5,6)--(6,6); \draw (6,4)--(6,6); \fill (0,0) circle (2pt); \fill (0,1) circle (2pt);  \fill (0,2) circle (2pt);  \fill (1,0)
circle (2pt);  \fill (1,2) circle (2pt);
 \fill (2,0) circle (2pt); \fill (2,1) circle (2pt); 
\fill (2,2) circle (2pt);  \fill (2,3)
circle (2pt); \fill (3,1) circle (2pt);
  \fill (3,3) circle (2pt);  \fill
(3,3) circle (2pt);  \fill (3,4) circle
(2pt);  \fill (4,3) circle (2pt);  \fill (4,4) circle (2pt);  \fill (4,5) circle (2pt); \fill (4,1) circle (2pt); \fill (4,2) circle (2pt); \fill (5,4) circle (2pt); \fill (6,4) circle (2pt);  \fill (6,6) circle (2pt); \fill (4,6) circle (2pt); \fill (5,6) circle (2pt); \fill (7,4) circle (2pt); \fill (7,5) circle (2pt); \fill (7,6) circle (2pt); \node at (6,2)
{$Q$}; \node at (2,5)
{$P$};  \node at (2,-1)
{\normalsize (a) $\vcarea(P,Q) = 2 +3 + 2 = 7$};
\end{tikzpicture}
\hspace{10pt}
\begin{tikzpicture}[scale=0.65]

\fill [pink] (0,0) rectangle (2,1);
\fill [pink] (0,1) rectangle (4,2);
\fill [pink] (4,4) rectangle (7,5);

\draw [very thick] (0,0)--(0,2)--(2,2)--(2,3)--(3,3)--(3,4)--(4,4)--(4,6)--(7,6);
\draw [very thick] (0,0)--(2,0)--(2,1)--(4,1)--(4,4)--(7,4)--(7,6);
\draw (1,0)--(1,2); \draw (0,1)--(2,1); \draw (2,1)--(2,2); \draw
(2,2)--(4,2); \draw (3,1)--(3,3); \draw (3,4)--(4,4); \draw (3,3)--(4,3); \draw (4,5)--(7,5); \draw (5,6)--(5,4); \draw (5,6)--(6,6); \draw (6,4)--(6,6); \fill (0,0) circle (2pt); \fill (0,1) circle (2pt);  \fill (0,2) circle (2pt);  \fill (1,0)
circle (2pt);  \fill (1,2) circle (2pt);
 \fill (2,0) circle (2pt); \fill (2,1) circle (2pt); 
\fill (2,2) circle (2pt);  \fill (2,3)
circle (2pt); \fill (3,1) circle (2pt);
  \fill (3,3) circle (2pt);  \fill
(3,3) circle (2pt);  \fill (3,4) circle
(2pt);  \fill (4,3) circle (2pt);  \fill (4,4) circle (2pt);  \fill (4,5) circle (2pt); \fill (4,1) circle (2pt); \fill (4,2) circle (2pt); \fill (5,4) circle (2pt); \fill (6,4) circle (2pt);  \fill (6,6) circle (2pt); \fill (4,6) circle (2pt); \fill (5,6) circle (2pt); \fill (7,4) circle (2pt); \fill (7,5) circle (2pt); \fill (7,6) circle (2pt); \node at (6,2)
{$Q$}; \node at (2,5)
{$P$}; \node at (2,-1)
{\normalsize (b) $\vrarea(P,Q) = 2 +4 + 3 = 9$};
\end{tikzpicture}
\end{center}
\begin{center}
\begin{tikzpicture}[scale=0.75]
\draw [very thick] (0,0)--(0,2)--(2,2)--(2,3)--(3,3)--(3,5)--(4,5);
\draw [very thick] (0,0)--(2,0)--(2,1)--(3,1)--(3,3)--(4,3)--(4,5);
\draw (1,0)--(1,2); \draw (0,1)--(2,1); \draw (2,1)--(2,2); \draw
(2,2)--(3,2); \draw (3,4)--(4,4); \fill (0,0) circle (2pt); \draw
(-.3,.5) node  {1}; \fill (0,1) circle (2pt); \draw (-.3,1.5) node
{2}; \fill (0,2) circle (2pt); \draw (.5,2.3) node  {3}; \fill (1,0)
circle (2pt); \draw (1.4,2.3) node  {4}; \fill (1,2) circle (2pt);
\draw (1.8,2.5) node  {5}; \fill (2,0) circle (2pt); \draw (2.4,3.3)
node  {6}; \fill (2,1) circle (2pt); \draw (2.8,3.5) node  {7};
\fill (2,2) circle (2pt); \draw (2.8,4.5) node  {8}; \fill (2,3)
circle (2pt); \draw (3.5,5.3) node  {9}; \fill (3,1) circle (2pt);
\draw (.5,-.3) node  {3}; \fill (3,2) circle (2pt); \draw (1.5,-.3)
node  {4}; \fill (3,3) circle (2pt); \draw (2.2,.4) node  {1}; \fill
(3,3) circle (2pt); \draw (2.6,.7) node  {6}; \fill (3,4) circle
(2pt); \draw (3.3,1.5) node  {2}; \fill (3,5) circle (2pt); \draw
(3.3,2.3) node  {5}; \fill (4,3) circle (2pt); \draw (3.6,2.7) node
{9}; \fill (4,4) circle (2pt); \draw (4.3,3.5) node  {7}; \fill
(4,5) circle (2pt); \draw (4.3,4.5) node  {8};
\node at (5,2)
{$Q$}; \node at (1,4)
{$P$};
 \node at (2,-1)
{\normalsize The bijection $\Upsilon$. Here $\Upsilon(P,Q) = 341625978 \in \Sym_9(321)$.};
\end{tikzpicture}
\end{center}

\begin{theorem}
For any $n \geq 1$,
$$
\sum_{(P,Q) \in \mathcal{H}_n} q^{\vcarea(P,Q)} t^{\val(Q)} = \sum_{(P,Q) \in \mathcal{H}_n} q^{\vrarea(P,Q)} t^{\val(Q)}.
$$
\end{theorem}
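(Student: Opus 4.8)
The plan is to transport the identity across the bijection $\Upsilon\colon\mathcal{H}_n\to\Sym_n(321)$ indicated by the figure, and then feed it into Theorem~\ref{majmakB} (equivalently, into the involution~(\ref{majmakinvolution})). First I would make $\Upsilon$ precise: for $(P,Q)\in\mathcal{H}_n$ put $\Upsilon(P,Q)=\sigma$ with $\sigma(i)=\Proj^Q_P(i)$. The structural facts I would establish are that the assignment ``$i$th step of $Q$'' $\mapsto$ ``$\Proj^Q_P(i)$th step of $P$'' pairs the $E$-steps of $Q$ with the $E$-steps of $P$ column by column and the $N$-steps of $Q$ with the $N$-steps of $P$ row by row; hence the $E$-steps of $Q$ occur exactly at the positions of the left-to-right maxima of $\sigma$, the $E$-steps of $P$ are indexed exactly by the value set $\LRMax(\sigma)$, and the remaining entries of $\sigma$ (at the $N$-positions) are the elements of $[n]\setminus\LRMax(\sigma)$ in increasing order. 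These three data reconstruct $(P,Q)$; Lemma~\ref{321decomp} is exactly what makes the reconstruction well defined, and that the reconstructed pair lies in $\mathcal{H}_n$ (weak nesting from $v_a\ge p_a$, where the $a$th left-to-right maximum has value $v_a$ and position $p_a$, plus non-sharing of $N$-steps) I would check once. In particular $\Valley(Q)=\Des(\sigma)$, since $Q_iQ_{i+1}=EN$ forces $\sigma(i)$ to be a left-to-right maximum and $\sigma(i+1)$ not, which for a $321$-avoider is the same as $i\in\Des(\sigma)$; so $\val(Q)=\des(\sigma)$.

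Next I would compute the areas at the valleys. For $i\in\Valley(Q)=\Des(\sigma)$ one checks from the prefix $E/N$-counts that $\Proj^Q_P(i)$ is the step of $P$ indexed by the left-to-right maximum $\sigma(i)$, while $\Proj^Q_P(i+1)$ is the step of $P$ indexed by the non--left-to-right maximum $\sigma(i+1)$; comparing heights, respectively abscissae, then gives
$$
\area_{(P,Q)}(i)=\sigma(i)-i,\qquad \area_{(P,Q)}(i+1)=(i+1)-\sigma(i+1).
$$
Summing over $\Des(\sigma)$ (using that distinct descents have distinct tops and distinct bottoms) yields
$$
\vcarea(P,Q)=\sum_{a\in\DT(\sigma)}a-\maj(\sigma),\qquad
\vrarea(P,Q)=\maj(\sigma)+\des(\sigma)-\sum_{a\in\DB(\sigma)}a .
$$

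Finally, let $\phi\colon\Sym_n(321)\to\Sym_n(321)$ be the involution~(\ref{majmakinvolution}), which preserves $\DB$ and $\DT$ (hence $\des$) and satisfies $\maj(\phi\sigma)=\mak(\sigma)$. Then $\vrarea(\Upsilon^{-1}(\phi\sigma))=\mak(\sigma)+\des(\sigma)-\sum_{a\in\DB(\sigma)}a$, so it suffices to prove the identity
$$
\sum_{a\in\DT(\sigma)}a+\sum_{a\in\DB(\sigma)}a=\maj(\sigma)+\mak(\sigma)+\des(\sigma),\qquad\sigma\in\Sym_n(321),
$$
after which, using that $\phi$ is a bijection of $\Sym_n(321)$ and transporting along $\Upsilon$,
$$
\sum_{(P,Q)\in\mathcal{H}_n}q^{\vcarea(P,Q)}t^{\val(Q)}
=\sum_{\sigma}q^{\vcarea(\Upsilon^{-1}\sigma)}t^{\des(\sigma)}
=\sum_{\sigma}q^{\vrarea(\Upsilon^{-1}\phi\sigma)}t^{\des(\phi\sigma)}
=\sum_{(P,Q)\in\mathcal{H}_n}q^{\vrarea(P,Q)}t^{\val(Q)} .
$$
For the identity I would use two pieces. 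First, $\mak(\sigma)=\sum_{a\in\DB(\sigma)}a+(\underline{31}2)\sigma$ for \emph{every} permutation: grouping $(1\underline{32})$, $(\underline{32}1)$, $(\underline{21})$ by descent, a descent at position $i$ with bottom $b=\sigma(i+1)$ contributes $\#\{p<i:\sigma(p)<b\}+\#\{q>i+1:\sigma(q)<b\}+1=(b-1)+1=b$, since precisely $b-1$ positions other than $i,i+1$ carry a value $<b$. Second, $\sum_{a\in\DT(\sigma)}a=\maj(\sigma)+\des(\sigma)+(\underline{31}2)\sigma$ for $321$-avoiding $\sigma$: a descent top $\sigma(i)$ is a left-to-right maximum, so $\sigma(i)-i$ is the number of $q>i$ with $\sigma(q)<\sigma(i)$, and as $\sigma$ has no $\underline{32}1$-occurrence this number is $1$ (from $q=i+1$) plus the number of $q>i+1$ with $\sigma(i+1)<\sigma(q)<\sigma(i)$, i.e.\ $1$ plus the number of $\underline{31}2$-occurrences of $\sigma$ with underlined letters $\sigma(i),\sigma(i+1)$; summing over descents gives the claim. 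Adding the two pieces produces the identity.

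The step I expect to be the main obstacle is the geometric bookkeeping behind the two $\area$ formulas: one must translate $\Proj^Q_P$ together with the column/row of each step of $P$ and $Q$ into the left-to-right maxima data of $\sigma$ carefully enough that $\area_{(P,Q)}(i)$ and $\area_{(P,Q)}(i+1)$ collapse to $\sigma(i)-i$ and $(i+1)-\sigma(i+1)$. Once this, together with the well-definedness of $\Upsilon$, is in hand, the rest is the short two-line identity above combined with Theorem~\ref{majmakB}.
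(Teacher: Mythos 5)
Your proposal is correct, and its skeleton is the paper's: transport via the Cheng--Eu--Fu bijection $\Upsilon$, identify $\Valley(Q)=\Des(\Upsilon(P,Q))$, and conjugate the involution $\phi$ of (\ref{majmakinvolution}) back to $\mathcal{H}_n$. Where you diverge is the middle bookkeeping. The paper expresses the two valley areas as $1$ plus restricted vincular counts, $\area_{(P,Q)}(i)=1+\restr{(\underline{31}2)}{(\sigma(i),\sigma(i+1),\dsh)}\sigma$ and $\area_{(P,Q)}(i+1)=1+\restr{(2\underline{31})}{(\dsh,\sigma(i),\sigma(i+1))}\sigma$, and then concludes in one line from the property $(\underline{31}2)\phi(\sigma)=(2\underline{31})\sigma$ established inside the proof of Theorem \ref{majmakB}; you instead use the $321$-avoidance to collapse the areas to the closed forms $\sigma(i)-i$ and $(i+1)-\sigma(i+1)$ (these agree with the paper's formulas, and check on the example $\Upsilon(P,Q)=341625978$), sum them to $\sum_{a\in\DT(\sigma)}a-\maj(\sigma)$ and $\maj(\sigma)+\des(\sigma)-\sum_{a\in\DB(\sigma)}a$, and reduce everything to the identity $\sum_{\DT(\sigma)}a+\sum_{\DB(\sigma)}a=\maj(\sigma)+\mak(\sigma)+\des(\sigma)$ on $\Sym_n(321)$, which your two pieces ($\mak=\sum_{\DB}a+(\underline{31}2)$ in general, and $\sum_{\DT}a=\maj+\des+(\underline{31}2)$ for $321$-avoiders) do prove correctly. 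The trade-off: your route only needs the ``external'' data of $\phi$ (it preserves $\DB$, $\DT$, $\des$ and exchanges $\maj$ with $\mak$), i.e.\ essentially the statement of Theorem \ref{majmakB}, at the cost of an auxiliary identity and area formulas that are specific to the $321$-avoiding setting; the paper's route leans on the finer pattern-swap property of $\phi$ but then needs no further identity. The step you flag as the main obstacle---turning $\Proj^Q_P$ and the row/column data into $\sigma(i)-i$ and $(i+1)-\sigma(i+1)$---is exactly the prefix $N$/$E$-count computation the paper performs, so it is real work but presents no difficulty beyond what the paper itself does.
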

\begin{proof}
We begin by recalling a bijection $\Upsilon: \mathcal{H}_n \to \Sym_n(321)$ due to Cheng-Eu-Fu \cite{CEF}. Given $(P,Q) \in \mathcal{H}_n$, set $\Label_P(i) = i$ and $\Label_Q(i) = \Label_P(\Proj_{P}^Q(i))$. Then 
$$
\Upsilon(P,Q) = \Label_Q(1) \cdots \Label_Q(n) \in \Sym_n(321)
$$ 
is a bijection.

Let $(P,Q) \in \mathcal{H}_n$ and $i \in \Valley(Q)$. The definition of $\Upsilon$ immediately gives
$$
\Valley(P,Q) = \Des(\Upsilon(P,Q)).
$$ 
In particular $\Label_Q(i+1) < \Label_Q(i)$.
Let $s = \Proj_P^Q(i+1)$ and $t = \Proj_P^Q(i)$. Then $s < t$ and 
\begin{align*}
\area_{(P,Q)}(i) &= |\{ j : P_j = N, \thinspace s \leq j \leq t  \}| \\ &= |\{ j: \Label_Q(i+1) \leq \Label_Q(j) < \Label_Q(i), \thinspace j > i \}| \\ &= 1 + \restr{(\underline{31}2)}{(\Label_Q(i),\thinspace\Label_Q(i+1),\thinspace \dsh )} \Upsilon(P,Q).
\end{align*} \noindent
Similarly,
\begin{align*}
\area_{(P,Q)}(i+1) &= |\{ j : P_j = E, \thinspace s \leq j \leq t  \}| \\ &= |\{ j: \Label_Q(i+1) < \Label_Q(j) \leq \Label_Q(i),  \thinspace j \leq i \}| \\ &= 1 + \restr{(2\underline{31})}{(\dsh,\thinspace \Label_Q(i),\thinspace \Label_Q(i+1) )} \Upsilon(P,Q).
\end{align*} \noindent
Let $\phi: \Sym_n(321) \to \Sym_n(321)$ be the bijection (\ref{majmakinvolution}) from Theorem \ref{majmakB}. Recall that $(\underline{31}2)\phi(\sigma) = (2\underline{31}) \sigma$ and $\des(\phi(\sigma)) = \des(\sigma)$ for all $\sigma \in \Sym_n(321)$. Let $\Phi:\mathcal{H}_n \to \mathcal{H}_n$ be the bijection
$$
\Phi = \Upsilon^{-1} \circ \phi \circ \Upsilon,
$$
and set $(P',Q') = \Phi(P,Q)$.
Then
\begin{align*}
\displaystyle \vcarea(\Phi(P,Q)) &= \sum_{i \in \Valley(Q')} \area_{(P',Q')}(i) \\ &= \sum_{i \in \Valley(Q')} \left ( 1 + \restr{(\underline{31}2)}{  
(\Label_{Q'}(i),\thinspace \Label_{Q'}(i+1), \thinspace \dsh)}\Upsilon(P',Q') \right ) \\ &= \sum_{i \in \Des(\phi(\Upsilon(P,Q)))} \left ( 1 + \restr{(\underline{31}2)}{
(\phi(\Label_Q(i)), \thinspace \phi(\Label_Q(i+1)), \thinspace \dsh)}\phi(\Upsilon(P,Q))  \right ) \\ &= (\des + (\underline{31}2)) \phi(\Upsilon(P,Q)) \\ &= (\des + (2\underline{31})) \Upsilon(P,Q) \\ &= \sum_{i \in \Valley(Q)} \left ( 1 + \restr{(2 \underline{31})}{(\dsh, \thinspace \Label_Q(i), \thinspace \Label_Q(i+1))}\Upsilon(P,Q)  \right ) \\ &= \sum_{i \in \Valley(Q)} \area_{(P,Q)}(i+1) \\ &= \vrarea(P,Q).
\end{align*} \noindent
Since $\Valley(P,Q) = \Des(\Upsilon(P,Q))$ and $\des(\phi(\sigma)) = \des(\sigma)$ it follows that $\val(Q') = \val(Q)$. This concludes the proof.
\end{proof} \noindent
Below we provide a brief account for a well-known lemma due to Simion and Schmidt which will be used to justify the bijection in the next theorem. 
\begin{lemma}[Simion-Schmidt \cite{SSc}] \label{simionschmidt}
A permutation $\sigma \in \Sym(132)$ is uniquely determined by the values and positions of its left-to-right minima.
\end{lemma}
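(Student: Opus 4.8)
The plan is to recover an arbitrary $\sigma \in \Sym_n(132)$ from the positions and values of its left-to-right minima by a single left-to-right sweep in which every entry is forced. Write the left-to-right minima of $\sigma$ as occurring at positions $p_1 < p_2 < \cdots < p_k$ with values $m_1 > m_2 > \cdots > m_k$ (so $p_1 = 1$ and $m_k = 1$). I would prove by induction on $i$ that $\sigma(1), \dots, \sigma(i)$ are determined by this data. When $i \in \{p_1, \dots, p_k\}$ there is nothing to do, since $\sigma(i)$ is the prescribed left-to-right minimum value; the content lies in the case where $i$ is not a left-to-right minimum position.

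The key lemma I would isolate is the following: if $\sigma \in \Sym_n(132)$ and $i$ is not a left-to-right minimum position, then $\sigma(i)$ is the smallest element of $[n] \setminus \{\sigma(1), \dots, \sigma(i-1)\}$ that exceeds $v_i := \min\{\sigma(1), \dots, \sigma(i-1)\}$. The proof is a one-line pattern argument: $\sigma(i)$ is itself an unused value exceeding $v_i$, so the indicated minimum exists and is at most $\sigma(i)$; were it strictly smaller, there would be an unused value $u$ with $v_i < u < \sigma(i)$, necessarily occurring at some position $z > i$, and then $\sigma(\ell), \sigma(i), \sigma(z)$ --- where $\ell < i$ is the position with $\sigma(\ell) = v_i$ --- would be an occurrence of $132$, contradicting $\sigma \in \Sym_n(132)$.

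Granting the lemma, the induction closes at once: by the inductive hypothesis $\sigma(1), \dots, \sigma(i-1)$ are already determined by the data, hence so are the set of values already used and the running minimum $v_i$, and hence so is $\sigma(i)$ --- directly if $i$ is a minimum position, and via the lemma otherwise. Thus $\sigma$ is uniquely determined by the positions and values of its left-to-right minima. Note that no realizability analysis is needed: we only use that any $\sigma \in \Sym_n(132)$ carrying the prescribed data must agree with the reconstructed sequence.

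I expect the only real obstacle to be identifying the correct form of the key lemma. The naive guess that the non-minima entries of a $132$-avoider appear in increasing order is false (for instance $3412 \in \Sym_4(132)$ has non-minima entries $4,2$), so the forced value must instead be described locally, relative to the current running minimum; once that is set up, the rest is routine.
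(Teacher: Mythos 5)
Your proof is correct and follows essentially the same route as the paper: a left-to-right forced reconstruction, where the non-minimum entry at each position must be the smallest unused value exceeding the current left-to-right minimum, justified by exactly the same $132$-pattern contradiction (the paper's ``$myx$'' occurrence is your $\sigma(\ell),\sigma(i),\sigma(z)$). Your version merely packages the paper's terse argument as an explicit key lemma plus induction, and correctly notes that only uniqueness, not realizability, is required for the statement.
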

\begin{proof}
It is clear that the left-to-right minima are positioned in decreasing order relative to each other. Now fill in the remaining numbers from left to right, for each empty position $i$ choosing the smallest remaining entry that is larger than the closest left-to-right minima $m$ in position before $i$. If the remaining numbers are not entered in this unique way and $y$ is placed before $x$ where $y > x$, then $myx$ is an occurrence of the pattern $132$.  
\end{proof}
\begin{theorem}
For any $n \geq 1$,
$$
\sum_{\sigma \in \Sym(132)} q^{\maj(\sigma)} \xx^{\LRMin(\sigma)} = \sum_{\sigma \in \Sym(132)} q^{\s6(\sigma)} \xx^{\LRMin(\sigma)}
$$
\end{theorem}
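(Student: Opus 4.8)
The plan is to split both sums according to the value set of the left-to-right minima and thereby reduce to a purely positional problem. Fix a set $S=\{m_1>m_2>\dots>m_k=1\}\subseteq[n]$ with $1\in S$; since $\xx^{\LRMin(\sigma)}$ depends only on $S$, it suffices to prove that $\maj$ and $\s6$ are equidistributed over $\{\sigma\in\Sym_n(132):\LRMin(\sigma)=S\}$. By Lemma~\ref{simionschmidt} such a $\sigma$ is determined by the positions $1=p_1<p_2<\dots<p_k$ of its minima, and tracking the greedy reconstruction used in the proof of Lemma~\ref{simionschmidt} one reads off that a tuple of positions is realizable exactly when the ballot-type inequalities $p_{j+1}\le n-m_j+2$ hold for $j=1,\dots,k-1$ (the block of non-minima placed just after the $j$-th minimum must fit among the still-unused values exceeding $m_j$). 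Thus the theorem reduces to an equidistribution over these realizable tuples, for each fixed $S$.

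Next I would compute the two statistics on such a configuration. In any $\sigma\in\Sym_n(132)$ a descent can only sit immediately before a left-to-right minimum: if $\sigma(d)>\sigma(d+1)$ and $d+1$ were not a minimum, an earlier smaller entry $\sigma(i)$ would yield the $132$-pattern $\sigma(i)\,\sigma(d)\,\sigma(d+1)$; conversely every minimum after the first is preceded by a descent. Hence $\Des(\sigma)=\{p_2-1,\dots,p_k-1\}$, so $\des(\sigma)=k-1$ and $\maj(\sigma)=\sum_{l=2}^k(p_l-1)$. For $\s6=(\underline{21}3)+(3\underline{21})+(\underline{13}2)+(\underline{21})$, I would use the identity
$$
(\underline{21}3)\sigma+(3\underline{21})\sigma=\sum_{d\in\Des(\sigma)}\bigl(n-\sigma(d)\bigr),
$$
valid for all $\sigma\in\Sym_n$ because the $n-\sigma(d)$ entries larger than the descent top $\sigma(d)$ lie neither at position $d$ nor at $d+1$, and so are split by the descent into exactly the occurrences counted by the two vincular statistics. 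Since $(\underline{13}2)\sigma=0$ on $\Sym_n(132)$, this gives
$$
\s6(\sigma)=(k-1)(n+1)-\sum_{t\in\DT(\sigma)}t,
$$
where $\DT(\sigma)=\{\sigma(p_2-1),\dots,\sigma(p_k-1)\}$; here $M_l:=\sigma(p_{l+1}-1)$ is the top of the increasing run $\sigma(p_l)\cdots\sigma(p_{l+1}-1)$, an explicit function of $S$ and the $p_j$ through the greedy fill.

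The core of the proof is a bijection $\phi$ on the realizable tuples (with $S$ fixed) satisfying $\s6(\phi(\sigma))=\maj(\sigma)$. I would construct $\phi$ recursively by peeling off the leftmost run. Choosing $p_2$ fixes the first descent position $p_2-1$ and the first run's top $M_1=m_1+(p_2-2)$, and reduces $\sigma$ to its restriction to positions $p_2,\dots,n$; standardised, this is a $132$-avoider $\tau$ of length $n-p_2+1$ whose left-to-right minima take the values $\{m_2,\dots,m_k\}$, and one has $\maj(\sigma)=\maj(\tau)+(k-1)(p_2-1)$ together with a companion (less tidy) formula expressing $\s6(\sigma)$ through $\s6(\tau)$, $M_1$, and the number of descent tops of $\tau$ lying above the cut value $m_1$. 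Running the inductive bijection on $\tau$ and re-attaching a first run of the appropriate length then produces $\phi$; the compatibility to verify is that the rule governing the $\maj$-increment and the rule governing the $\s6$-increment are interchanged, while the re-attached run keeps the configuration realizable. If an explicit $\phi$ turns out to be unwieldy, it suffices instead to prove that $\sum q^{\maj}$ and $\sum q^{\s6}$ over the realizable tuples satisfy a common recursion in $k$, most smoothly after introducing an auxiliary variable marking $\des$; this still yields the theorem.

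The main obstacle is the statistic $\s6$. Whereas $\maj$ reads off the positions $p_l$ directly, the descent tops $M_l=\sigma(p_{l+1}-1)$ are created by the greedy fill in a telescoping fashion — a long run may spill past the values already consumed by earlier runs — so $\sum_l M_l$ is not a linear function of the $p_l$'s, and the bulk of the work is to rewrite its recursion so that it becomes manifestly dual to $\maj=\sum_l(p_l-1)$ and compatible with the ballot constraint. Getting these two pieces of bookkeeping to transpose cleanly is exactly where the content of the theorem lies.
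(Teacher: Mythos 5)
Your reductions are sound and in fact reproduce the computations that carry the weight in the paper's own argument: for $\sigma\in\Sym_n(132)$ every descent sits immediately before a left-to-right minimum, so $\maj(\sigma)=\sum_{l\geq 2}(p_l-1)$; the identity $(\underline{21}3)\sigma+(3\underline{21})\sigma=\sum_{i\in\Des(\sigma)}(n-\sigma(i))$ together with $(\underline{13}2)\sigma=0$ gives $\fozea(\sigma)=\sum_{i\in\Des(\sigma)}(n-\sigma(i)+1)$; and your ballot-type feasibility condition $p_{j+1}\leq n-m_j+2$ is the right description of which position tuples are realizable for a fixed value set $S$ of left-to-right minima. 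The problem is that the proof stops there. The bijection $\phi$ on realizable tuples is only sketched (``peel off the leftmost run''), the companion formula expressing the $\fozea$-increment is admitted to be ``less tidy'' and never written down, and the fallback — a common recursion for the two generating functions over realizable tuples — is not derived either. You say yourself that making the two bookkeeping schemes transpose ``is exactly where the content of the theorem lies,'' and that is an accurate self-assessment: the recursive peeling runs straight into the obstruction you identify, namely that the descent tops produced by the greedy fill are not a tractable function of the positions, so as written this is a correct setup plus a statement of the remaining problem, not a proof.

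The way the paper closes exactly this gap is to avoid ever computing descent tops of the image. By Lemma \ref{simionschmidt}, and since $\LRMin(\sigma)=\DB(\sigma)\cup\{\sigma(1)\}$ on $\Sym_n(132)$, a $132$-avoiding permutation is uniquely determined by its first letter, its descent set and its descent bottoms. One therefore defines $\phi(\sigma)$ by keeping $\sigma(1)$ and $\DB(\sigma)$ and prescribing $\Des(\phi(\sigma))=\{\,n-\sigma(i)+1 : i\in\Des(\sigma)\,\}$, i.e.\ the new descent \emph{positions} are the complemented old descent \emph{tops}. Then $\maj(\phi(\sigma))=\sum_{i\in\Des(\sigma)}\bigl(n-\sigma(i)+1\bigr)=\fozea(\sigma)$ holds by construction, and $\LRMin$ is preserved because the first letter and the descent bottoms are. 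The only substantive verification left is precisely your feasibility check — that the Simion--Schmidt greedy insertion succeeds for this prescribed data, i.e.\ that between consecutive descent bottoms there remain enough unused larger values — together with the invertibility of the assignment of data. So your positional framework contains all the ingredients; the missing idea is to transfer the values $n-\alpha+1$, $\alpha\in\DT(\sigma)$, directly into the descent positions of the image, which makes the duality you were trying to engineer automatic instead of something to be extracted from two interleaved recursions.
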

\begin{proof}
Let $\sigma \in \Sym_n(132)$. It is not difficult to see that $\LRMin(\sigma) = \DB(\sigma) \cup \{ \sigma(1)\}$. Indeed if $\sigma(i) \in \DB(\sigma)$ and $\sigma(j) < \sigma(i)$ for some $j < i$, then $\sigma(j)\sigma(i-1)\sigma(i)$ is an occurrence of $132$. Hence by Lemma \ref{simionschmidt} we have that $\sigma$ is uniquely determined equivalently by its first letter, $\Des(\sigma)$ and $\DB(\sigma)$. We define a map $\phi: \Sym_n(132) \to \Sym_n(132)$ by requiring
\begin{align*}
\displaystyle \phi(\sigma)(1) &= \sigma(1), \\ \DB(\phi(\sigma)) &= \DB(\sigma),\\\Des(\phi(\sigma)) &= \{ n - \sigma(i) +1 : i \in \Des(\sigma) \}.
\end{align*} \noindent 
We claim that a permutation $\phi(\sigma) \in \Sym_n(132)$ with the above requirements exists. If the claim holds, then the image of $\sigma$ is uniquely determined by the data above and therefore $\phi$ is well-defined. It also immediately follows that $\phi$ is a bijection. 

Let $i_1 <\cdots < i_m$ be the descents of $\sigma$. Suppose 
$$
n - \sigma(i_{j_1}) + 1 < \cdots < n- \sigma(i_{j_m}) + 1.
$$
To show that $\phi$ is well-defined we show that the insertion procedure from Lemma \ref{simionschmidt} is always valid. Given a descent bottom (i.e. left-to-right minima) $\sigma(i_k+1)$ in position $n- \sigma(i_{j_k})+2$ we must show that there exists enough remaining numbers greater than $\sigma(i_k+1)$ to fill in the gap to the next descent bottom $\sigma(i_{k+1}+1)$. Within the filling procedure, next after the descent bottom $\sigma(i_k+1)$, there exists 
$$
n - \sigma(i_k+1) - (n- \sigma(i_{j_k})+1) = \sigma(i_{j_k}) - \sigma(i_k+1) -1
$$ 
numbers remaining that are greater than $\sigma(i_k+1)$. There are
$$
(n - \sigma(i_{j_{k+1}}) + 2) - (n - \sigma(i_{j_k}) + 2) - 1 = \sigma(i_{j_{k}}) - \sigma(i_{j_{k+1}}) - 1
$$ 
positions to fill in the gap between the descent bottoms $\sigma(i_k+1)$ and $\sigma(i_{k+1}+1)$. 
By minimality
$$\sigma(i_{j_k}) - \sigma(i_{j_{k+1}}) \leq \sigma(i_{j_k}) - \sigma(i_k) \leq \sigma(i_{j_k}) - \sigma(i_k+1), 
$$ 
so there are enough numbers remaining to fill in the gap. Hence $\phi$ is well-defined. Finally,
\begin{align*}
\displaystyle \maj(\phi(\sigma)) &= \sum_{i \in \Des(\phi(\sigma))} i \\ &= \sum_{i \in \Des(\sigma)} (n- \sigma(i) + 1) \\ &= \sum_{\alpha \in \DT(\sigma)} (n-\alpha) + \des(\sigma) \\ &= ((\underline{21}3) + (3 \underline{21})) \sigma + (\underline{21})\sigma \\ &= \s6(\sigma).
\end{align*} \noindent
Since also $\phi(\LRMin(\sigma)) = \LRMin(\sigma)$, the theorem follows.
\end{proof} \noindent
Below we provide an additional list of information uniquely determining permutations in $\Sym_n(231)$. 

\begin{lemma} \label{231info}
A permutation $\sigma \in \Sym_n(231)$ is uniquely determined by any of the following data:
\begin{enumerate}[label=(\roman*)]
\item \label{231info:First} The values and positions of right-to-left minima.
\item \label{231info:Second} The last letter, ascents and ascent bottoms.
\item \label{231info:Third} The pairs $P(\sigma) = \{ (p,v): p \text{ peak and } v \text{ its following valley}\}$.
\item \label{231info:Fourth} The pairs $Q(\sigma) = \{(\alpha,\beta): \alpha \text{ descent top and } \beta \text{ its following descent bottom} \}$.
\item \label{231info:Fifth} The pairs $R(\sigma) = \left \{ \left (\alpha, \thinspace  \restr{(\underline{13}2)}{(\dsh,\alpha,\dsh)} \sigma \right ) : \alpha \text{ descent top} \right \}$.
\end{enumerate}
\end{lemma}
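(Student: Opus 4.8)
\emph{Plan.} The plan is to handle the five characterisations one at a time. The tool for \ref{231info:First} is the Simion--Schmidt Lemma; for the rest it is the standard decomposition of $231$-avoiders: if $\sigma\in\Sym_n(231)$ has $n$ in position $p$ then every entry before $n$ is smaller than every entry after $n$ (a violating pair together with $n$ would form a $231$), so $\sigma=\alpha\,n\,\beta$ with $\alpha\in\Sym(231)$ on the values $\{1,\dots,p-1\}$ and $\beta\in\Sym(231)$ on $\{p,\dots,n-1\}$, which I will iterate. Part \ref{231info:First} is then immediate: the reverse map is a bijection $\Sym_n(231)\to\Sym_n(132)$ carrying the right-to-left minima of $\sigma$, with their positions, to the left-to-right minima of $\sigma^r$, so \ref{231info:First} is Lemma~\ref{simionschmidt} transported along $\sigma\mapsto\sigma^r$. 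For \ref{231info:Second}, I would first prove that in a $231$-avoider the right-to-left minima are exactly the ascent bottoms together with $\sigma(n)$, sitting in the ascent positions together with position $n$: a right-to-left minimum in position $j<n$ has $\sigma(j)<\sigma(j+1)$, whence $j\in\Asc(\sigma)$, and conversely if $j\in\Asc(\sigma)$ but $\sigma(j)$ fails to be a right-to-left minimum, any $k>j+1$ with $\sigma(k)<\sigma(j)$ makes $\sigma(j)\sigma(j+1)\sigma(k)$ a $231$ pattern. Since right-to-left minima increase in value along $\sigma$, the position set and the value set $\AB(\sigma)\cup\{\sigma(n)\}$ determine how they are paired, which recovers the data of \ref{231info:First}.

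For \ref{231info:Fourth}, the following descent bottom of the descent top $\sigma(i)$ is just $\sigma(i+1)$, so $Q(\sigma)=\{(\sigma(i),\sigma(i+1)):i\in\Des(\sigma)\}$; chaining these pairs recovers the set of maximal descending runs of $\sigma$, as decreasing words over a set partition of $[n]$. The key point is that if a run $D$ is immediately followed by a run $D'$ in a $231$-avoider then every entry of $D'$ exceeds $\last(D)$ — otherwise $\last(D)$, the first entry of $D'$, and a smaller entry of $D'$ form a $231$ pattern — so the maximal descending runs of $\sigma$, read left to right, have strictly increasing last entries. Hence the runs recovered from $Q(\sigma)$ admit a unique ordering consistent with $231$-avoidance, namely by increasing last entry, and concatenating them returns $\sigma$.

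For \ref{231info:Fifth} I would induct on $n$ via $\sigma=\alpha\,n\,\beta$. Writing $\alpha\in\Sym_k$, one checks that the count $\restr{(\underline{13}2)}{(\dsh,\cdot,\dsh)}$ attached to a descent top of $\alpha$ is unchanged on passing to $\sigma$ (its witnessing entries remain in positions $\le k$), similarly for a descent top of $\beta$, and that the descent top $n$ — present exactly when $\beta\ne\emptyset$ — carries count $|\beta|$ when $\alpha\ne\emptyset$ and $0$ when $\alpha=\emptyset$. Thus $R(\sigma)$ is the disjoint union of $R(\alpha)$, the singleton $\{(n,|\beta|)\}$, and a copy of $R(\beta)$ with its values shifted up, and the pair $(n,c)\in R(\sigma)$ yields $k=n-1-c$ (with $c=0$ signalling $\alpha=\emptyset$, and the absence of $n$ as a first coordinate signalling $\beta=\emptyset$); splitting $R(\sigma)$ accordingly and applying the inductive hypothesis to $\alpha$ and $\beta$ reconstructs $\sigma$. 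Part \ref{231info:Third} I would prove by the same inductive scheme, but it is the fiddliest: one must use the notion of peak and valley under which a leading descent and a trailing descent register (equivalently, border $\sigma$ by $0$ on the left and $+\infty$ on the right, recording peaks and valleys as position--value pairs), and then carefully track how the peaks and valleys at the junctions within $\alpha\,n\,\beta$ reorganise — notably, the descending run through the last position of $\alpha$ acquires $\last(\alpha)$ as a new following valley in $\sigma$, and $n$ becomes a peak at position $k+1$ whenever $\beta\ne\emptyset$, which is what recovers $k$.

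The step I expect to be the main obstacle is the bookkeeping in \ref{231info:Fifth} and \ref{231info:Third}: for \ref{231info:Fifth}, verifying precisely that $\restr{(\underline{13}2)}{(\dsh,\alpha,\dsh)}\sigma$ decomposes as claimed across $\sigma=\alpha\,n\,\beta$ — in particular that no entry of $\beta$ ever contributes to a count attached to a descent top of $\alpha$, and conversely — and nailing the boundary cases $\alpha=\emptyset$ and $\beta=\emptyset$ so that the split index $k$ is unambiguously recoverable from $R(\sigma)$ alone; and for \ref{231info:Third}, pinning down the peak/valley convention exactly so that the reconstruction is actually valid (the naive reading fails — e.g.\ it does not separate $312$ from $321$). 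Once these case analyses are settled, the inductions themselves are routine.
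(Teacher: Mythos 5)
Your parts (i) and (ii) match the paper's argument, and your inductive treatment of (v) via the decomposition $\sigma=\alpha\,n\,\beta$ is a plausible alternative route (the paper instead derives (v) from (iii), by showing that a descent top $p$ is a peak exactly when its count is positive and that the count equals $p-v$ where $v$ is the following valley). The genuine problem is (iv). Your reconstruction rests on the claim that chaining the pairs of $Q(\sigma)$ recovers the maximal descending runs of $\sigma$ ``as decreasing words over a set partition of $[n]$'', so that concatenating them in order of increasing last entry returns $\sigma$. But a letter that is neither a descent top nor a descent bottom appears in no pair of $Q(\sigma)$: for $\sigma=21435\in\Sym_5(231)$ the letter $5$ is invisible, and for the identity permutation $Q(\sigma)=\emptyset$. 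Chaining therefore recovers only the descending runs of length at least two, and your concatenation produces merely the subword of $\sigma$ on $\DT(\sigma)\cup\DB(\sigma)$, not $\sigma$. Your observation that consecutive runs have increasing last entries is correct but does not close this gap; the entire content of (iv) is that the placement of the invisible letters is forced by $231$-avoidance together with the requirement that they create no new descents, and your argument never addresses this.

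This is exactly the work the paper does in part (iii): it shows the peak--valley pairs occur in increasing order of the valleys, and then that both the letters lying inside each descending run and the increasing words between a valley and the next peak can be inserted in only one way without creating a $231$; part (iv) is then reduced to (iii) by noting that the top and bottom of each chained run are a peak and a valley. Your (iv) neither reduces to (iii) nor supplies a substitute for this forcing argument, and your (iii) is itself left as an unfinished plan (you flag the boundary peak/valley convention and the junction bookkeeping as unresolved). So the two places where the paper's proof carries its real weight --- determining the letters outside $\DT(\sigma)\cup\DB(\sigma)$, respectively outside the peak/valley pairs --- are precisely where your proposal is erroneous ((iv)) or missing ((iii)); parts (i), (ii) and (v) are fine as sketched.
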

\begin{proof} \noindent \newline
\begin{enumerate}[label=(\roman*)]
\item Suppose the values and positions of right-to-left minima are fixed in $\sigma$. Then $\sigma^r \in \Sym_n(132)$ and the values and positions of the left-to-right minima in $\sigma^r$ are fixed. By Lemma \ref{simionschmidt} this information uniquely determines $\sigma^r$. Hence $\sigma$ is uniquely determined.
\item Follows directly from \ref{231info:First} since the positions and values of right-to-left minima are given by the positions and values of the ascents and ascent bottoms together with the last letter.
\item Consider the peak-valley decomposition
$$
\sigma = a_1p_1d_1v_1 \cdots a_{m-1}p_{m-1}d_{m-1}v_{m-1}a_mp_kd_m
$$
where $p_i$ and $v_i$ are peaks resp. valleys and $a_i$ and $d_i$ are (possibly empty) increasing resp. decreasing words for $i = 1, \dots, m$. 

We claim that the pairs in $P$ are relatively positioned in increasing order of the valleys. Indeed let $(p,v), (p',v') \in P(\sigma)$. Without loss assume $v < v'$. Suppose (for a contradiction) that $(p',v')$ is ordered before $(p,v)$ in $\sigma$. Note that $v' < p$, otherwise $v'\alpha p$ is an occurrence of $231$, where $\alpha$ is the ascent top following $v'$. This in turn implies that $v'pv$ is an occurrence of $231$ giving a contradiction. Therefore $(p,v)$ is ordered before $(p',v')$ proving the claim.

Next we claim that the decreasing words $d_j$ are uniquely determined. Going from right to left, let $d_j$ be the unique decreasing word of all remaining letters (in value) between $p_j$ and $v_j$ for $j = m,\dots, 1$. If we do not insert the letters this way and $v_j < \sigma_i < p_j$, where $\sigma_i$ is positioned before $p_j$ (and hence $v_j$) then $\sigma_ip_jv_j$ is an occurrence of $231$ which is forbidden.

Finally we show that the increasing words $a_j$ are uniquely determined. Suppose $a_j$ contains a letter $\alpha$ such that $\alpha > v_j$. Since $\alpha < p_j$ it follows that $\alpha p_j v_j$ is an occurrence of $231$. Therefore all letters of $a_i$ are smaller than $v_j$. Hence $a_j$ is given by the unique increasing word of all letters $\alpha$ such that $v_{j-1} < \alpha < v_j$ for $j = 1, \dots, m$. Hence $\sigma$ is uniquely determined.
\item Partition the letters in $\DB(\sigma) \cup \DT(\sigma)$ into maximal consecutive decreasing subwords $d_1, \dots, d_m$ based on the pairs in $Q(\sigma)$. The top element of each decreasing subword $d_i$ must be a peak and the bottom element valley. This information uniquely determines $\sigma$ as per part \ref{231info:Third}.
\item Note that $\alpha \in \DT(\sigma)$ is the largest letter in an occurrence of $\underline{13}2$ in $\sigma$ if and only if $\alpha$ is a peak. Therefore the peaks are the descent tops $\alpha$ with $\restr{(\underline{13}2)}{(\dsh, \alpha, \dsh)} \sigma > 0$. Given a peak $p$ and the closest valley $v$ to its right, any letter $\sigma_i$ such that $v <\sigma_i < p$ must be in position after $v$, otherwise $\sigma_i p v$ is an occurrence of $231$. Hence $\restr{(\underline{13}2)}{(\dsh,p,\dsh)} \sigma$ precisely represents the difference between $p$ and $v$. In other words $v = p - \restr{(\underline{13}2)}{(\dsh, p, \dsh)} \sigma$. Hence $\sigma$ is uniquely determined by part \ref{231info:Third}.
\end{enumerate}
\end{proof}

\begin{theorem} \label{makfoze}
For $n \geq 1$,
$$
\sum_{\sigma \in \Sym_n(231)} q^{\mak(\sigma)} t^{\des(\sigma)} = \sum_{\sigma \in \Sym_n(231)} q^{\s6(\sigma)} t^{\des(\sigma)}.
$$
\end{theorem}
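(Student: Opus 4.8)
The plan is to produce an explicit $\des$-preserving bijection $\phi\colon\Sym_n(231)\to\Sym_n(231)$ with $\s6(\phi(\sigma))=\mak(\sigma)$, built from the reconstruction of a $231$-avoider from the data of Lemma~\ref{231info}\ref{231info:Fifth}.

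First I would reduce both statistics. For an arbitrary permutation, grouping the vincular-pattern summands in Table~\ref{mafunc} by descent bottom, resp.\ descent top, gives $(1\underline{32})\sigma+(\underline{32}1)\sigma+(\underline{21})\sigma=\sum_{\beta\in\DB(\sigma)}\beta$ and $(\underline{21}3)\sigma+(3\underline{21})\sigma+(\underline{21})\sigma=\sum_{\alpha\in\DT(\sigma)}(n+1-\alpha)$, so
$$\mak(\sigma)=\sum_{\beta\in\DB(\sigma)}\beta+(\underline{31}2)\sigma,\qquad \s6(\sigma)=\sum_{\alpha\in\DT(\sigma)}(n+1-\alpha)+(\underline{13}2)\sigma.$$
Restricting to $\sigma\in\Sym_n(231)$ and reading off the maximal decreasing runs as in Lemma~\ref{231info}: for a descent $(p,p+1)$ every value strictly between $\sigma(p+1)$ and $\sigma(p)$ must lie to the right of position $p+1$, so $\restr{(\underline{31}2)}{(\sigma(p),\dsh,\dsh)}\sigma=\sigma(p)-\sigma(p+1)-1$ and hence $\mak(\sigma)=\sum_{\alpha\in\DT(\sigma)}\alpha-\des(\sigma)$; and, exactly as in the proof of Lemma~\ref{231info}\ref{231info:Fifth}, the largest letter of any $\underline{13}2$-occurrence is a descent top, so with $R(\sigma)=\{(\alpha,\restr{(\underline{13}2)}{(\dsh,\alpha,\dsh)}\sigma):\alpha\in\DT(\sigma)\}$ one obtains
$$\mak(\sigma)=\Big(\sum_{(\alpha,r)\in R(\sigma)}\alpha\Big)-|R(\sigma)|,\qquad \s6(\sigma)=\sum_{(\alpha,r)\in R(\sigma)}(n+1-\alpha+r).$$

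Next I would build $\phi$ from its action on $R$-data. By Lemma~\ref{231info}\ref{231info:Fifth}, $\sigma\mapsto R(\sigma)$ is a bijection onto the set of realizable pair-sets and $|R(\sigma)|=\des(\sigma)$. Define $\psi$ on that set by keeping each slack $r$ fixed and sending each descent top $\alpha$ to $n+2-\alpha+r$; on the run picture this complements each maximal decreasing run $D_j$ (replacing it by the decreasing word on $\{\,n+2-x:x\in D_j\,\}$) and then reorders the runs so that the valleys increase. Put $\phi:=R^{-1}\circ\psi\circ R$. The identity $n+1-(n+2-\alpha+r)+r=\alpha-1$ then forces, upon summing over the pairs, $\s6(\phi(\sigma))=\sum_{(\alpha,r)\in R(\sigma)}(\alpha-1)=\mak(\sigma)$, while $|\psi(R(\sigma))|=|R(\sigma)|$ gives $\des(\phi(\sigma))=\des(\sigma)$.

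Finally I would check that $\psi$ really lands inside the set of realizable pair-sets, that $\phi$ is a bijection, and that the one degenerate configuration is handled correctly: when $\sigma$ begins with a descent, the top of its first run is a peak ``at position~$1$'' recorded with slack $r=0$, and since that run has valley $1$ it cannot be complemented by subtracting from $n+2$; this run must be treated by a modified rule, which is the same boundary phenomenon as the fixing of the first letter in the $\Sym(132)$ theorem above. I expect the realizability claim to be the main obstacle: one has to show that the slack-preserving, value-complementing operation $\psi$ again returns the $R$-datum of a genuine member of $\Sym_n(231)$ — equivalently, that the greedy reconstruction underlying Lemma~\ref{231info} is compatible with complementing the decreasing runs — all the while keeping correct track of the term coming from a run whose valley equals~$1$.
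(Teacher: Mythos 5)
Your reductions are correct and coincide with the paper's: over $\Sym_n(231)$ one indeed has $\mak(\sigma)=\sum_{(\alpha,r)\in R(\sigma)}(\alpha-1)$ and $\s6(\sigma)=\sum_{(\alpha,r)\in R(\sigma)}(n+1-\alpha+r)$, with $|R(\sigma)|=\des(\sigma)$. Your bijection, however, is not the paper's: the paper sends $\sigma$ to the $231$-avoider whose descent top/bottom pairs are $\{(n+2-\alpha+r_\alpha,\;n+1-\alpha):\alpha\in\DT(\sigma)\}$, so that $\mak(\phi(\sigma))=\s6(\sigma)$, and it secures well-definedness and injectivity by proving inductively (via the inflation $\sigma=132[\sigma_1,1,\sigma_2]$) that $\alpha\mapsto n+2-\alpha+r_\alpha$ is injective and then invoking Lemma~\ref{231info} (iv)--(v). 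Your map instead fixes the slacks and complements the tops; it is an involution on data and satisfies the reverse identity $\s6(\phi(\sigma))=\mak(\sigma)$ (for $\sigma=21354$ the paper's map gives $31254$, yours gives $51324$), so it is a genuinely different construction which, if completed, would also prove the theorem.

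The gap is precisely the step you postpone: showing that $\psi(R(\sigma))$ is again the $R$-datum of some element of $\Sym_n(231)$. That is where essentially all the work lies, and it is not a formality, because realizability of such data is a genuine constraint: for instance no $\sigma\in\Sym_4(231)$ has descent top/bottom pairs $(4,2)$ and $(3,1)$, even though the tops are distinct, the bottoms are distinct and each top exceeds its bottom (the only candidates $4231$ and $3142$ both contain $231$). Concretely, the maximal decreasing runs of a $231$-avoider must be pairwise nested or stacked, in the sense that no letter of an earlier run lies strictly inside the value range of a later run; you would have to show that complementing each run about $n+2$, reordering by valleys, and applying your unspecified ``modified rule'' to a first run with valley $1$ (whose literal complement would involve the out-of-range value $n+1$) always returns such a configuration, and moreover that the reconstructed permutation has exactly the prescribed slacks --- e.g.\ every complemented run top must end up an honest peak, and the slack-$0$ tops coming from the degenerate run must end up either in position $1$ or as interior letters of other runs. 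None of this is carried out; granted it, bijectivity is immediate since $\psi$ is an involution on data, but as written the proposal is an outline whose crux --- the part the paper actually proves, via the injectivity of $f_\sigma$ and the uniqueness statements of Lemma~\ref{231info} --- is missing.
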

\begin{proof}
Let $\sigma \in \Sym_n(231)$. Note that for $\alpha \in \DT(\sigma)$ we have $\restr{(\underline{13}2)}{(\dsh, \alpha, \dsh)} \sigma \leq \alpha-2$ since there are at most $\alpha-2$ numbers between $\alpha$ and its immediately preceding ascent bottom (if present). 
Thus the function
\begin{align*}
f_{\sigma}: \DT(\sigma) &\to [n] \\
\alpha &\mapsto (n - \alpha + 2) + \restr{(\underline{13}2)}{(\dsh, \alpha, \dsh)} \sigma
\end{align*} \noindent
is well-defined. 

We claim that $f_{\sigma}$ is injective by induction on $n$. Consider the inflation form $\sigma = 132[\sigma_1, 1,\sigma_2]$ where $\sigma_1 \in \Sym_k(231)$ and $\sigma_2 \in \Sym_{n-k-1}(231)$. 
Let $\DT_{\leq k}(\sigma) = \{ \alpha \in \DT(\sigma) : \alpha \leq k \}$ and $\DT_{> k}(\sigma) = \{ \alpha \in \DT(\sigma) : \alpha > k \}$.
By induction $f_{\sigma_1}:\DT(\sigma_1) \to [k]$ is injective and $f_{\sigma}(\alpha) = n-k + f_{\sigma_1}(\alpha)$ for every $\alpha \in \DT_{\leq}(\sigma)$. 
Hence $\restr{f_{\sigma}}{\DT_{\leq k}(\sigma)}$ is injective. By induction $f_{\sigma_2}: \DT(\sigma_2) \to [n-k-1]$ is injective and $f_{\sigma}(\alpha) = 1 + f_{\sigma_2}(\alpha-k)$ for every $\alpha \in \DT_{> k}(\sigma)$.
Hence $\restr{f_{\sigma}}{\DT_{> k}(\sigma)}$ is injective. 
Finally note that $f_{\sigma}(n) = 2 + |\sigma_2|$ if $\sigma_1 \neq \emptyset$ and $f_{\sigma}(n) = 2$ if $\sigma_1 = \emptyset$.
Therefore for all $\alpha \in \DT_{\leq k}(\sigma)$ and $\beta \in \DT_{>k}(\sigma)$ we have
$$
f_{\sigma}(\alpha) \geq (n-k+2) > f_{\sigma}(n) > n-k \geq f_{\sigma}(\beta),
$$
if $\sigma_1 \neq \emptyset$ and
$$
f_{\sigma}(\alpha) \geq (n-k+2) > f_{\sigma}(\beta) > 2 = f_{\sigma}(n),
$$
if $\sigma_1 = \emptyset$. Hence $f_{\sigma}$ is injective on all of $\DT(\sigma)$.

Define a map $\phi: \Sym_{n}(231) \to \Sym_{n}(231)$ by setting the pairs of descent tops and descent bottoms in $\phi(\sigma)$ to $Q(\phi(\sigma)) = \{ (f_{\sigma}(\alpha), n-\alpha+1) : \alpha \in \DT(\sigma) \}$. By Lemma \ref{231info} \ref{231info:Fourth} this data uniquely determines $\phi(\sigma)$. Note that the pairs are well-defined since $f_{\sigma}$ is injective and $f_{\sigma}(\alpha) > n- \alpha +1$ for all $\alpha \in \DT(\sigma)$.

We claim that $\phi$ is a bijection. 
By Lemma \ref{231info} \ref{231info:Fourth} we may uniquely associate $\sigma$ with a set of pairs $R(\sigma) = \left \{ \left (\alpha, \thinspace  \restr{(\underline{13}2)}{(\dsh, \alpha, \dsh)} \sigma \right ) : \alpha \in \DT(\sigma) \right \}$. It suffices to show that $\phi$ is injective. Let $\pi_1, \pi_2 \in \Sym_n(231)$ such that $\pi_1 \neq \pi_2$. If 
$\DT(\pi_1) \neq \DT(\pi_2)$, then $\DB(\phi(\pi_1)) \neq \DB(\phi(\pi_2))$, so $\phi(\pi_1)\neq \phi(\pi_2)$. Assume therefore $\DT(\pi_1) = \DT(\pi_2)$. Since $\pi_1 \neq \pi_2$ we have by uniqueness that $R(\pi_1) \neq R(\pi_2)$. Therefore there exists $\alpha \in \DT(\pi_1) = \DT(\pi_2)$ such that $f_{\pi_1}(\alpha) \neq f_{\pi_2}(\alpha)$. Thus $Q(\phi(\pi_1)) \neq Q(\phi(\pi_2))$ which again implies that $\phi(\pi_1) \neq \phi(\pi_2)$. Hence $\phi$ is injective and therefore a bijection.

It remains to show that $\mak(\phi(\sigma)) = \s6(\sigma)$. Note that
$$
((1 \underline{32}) + (\underline{32}1) + (\underline{21}))\sigma = \sum_{\beta \in \DB(\sigma)} \beta.
$$
Since there are no occurrences of $2\underline{31}$ in $\sigma$ by assumption, the letters between each pair of descent top  and descent bottom occur to the right of the pair. Therefore the number of occurrences of $\underline{31}2$ in $\sigma$ is given precisely by
$$
\sum_{(\alpha, \beta) \in Q(\sigma)} (\alpha - \beta - 1).
$$
Hence
$$
\mak(\sigma) = \sum_{\beta \in \DT(\sigma)} (\alpha-1).
$$
On the other hand note that 
$$
((\underline{21}3) + (3\underline{21})+(\underline{21}))\sigma = \sum_{\alpha \in \DT(\sigma)} (n- \alpha +1).
$$
Thus
\begin{align*}
\s6(\sigma) &= \sum_{\alpha \in \DT(\sigma)} (n- \alpha + 1) + (\underline{13}2)\sigma \\ &= \sum_{\alpha \in \DT(\sigma)} \left (n- \alpha + 1 + \restr{(\underline{13}2)}{(\dsh, \alpha, \dsh)} \sigma \right ) \\ &=  \sum_{\alpha \in \DT(\sigma)} (f_{\sigma}(\alpha) - 1).
\end{align*} \noindent
Hence
$$
\mak(\phi(\sigma)) = \sum_{\alpha' \in \DT(\phi(\sigma))} (\alpha' -1) = \sum_{\alpha \in \DT(\sigma)} (f(\alpha) - 1) = \s6(\sigma).
$$
Finally since $\des(\phi(\sigma)) = \des(\sigma)$, the theorem follows.
\end{proof} \noindent
\begin{remark}
Via Proposition \ref{majmakA} we may deduce further equidistributions between $\maj$ and $\fozea$, see Table \ref{equidistsum} in \S \ref{sec::sumconj} for a summary.
\end{remark}

\section{Equidistributions via Dyck paths}
A \textit{Dyck path} of length $2n$ is a lattice path in $\mathbb{Z}^2$ between $(0,0)$ and $(2n,0)$ consisting of up-steps $(1,1)$ and down-steps $(1,-1)$ which never go below the $x$-axis. For convenience we denote the up-steps by $U$ and the down-steps by $D$ enabling us to encode a Dyck path as a \textit{Dyck word} (we will refer to the two notions interchangeably). Let $\Dyck_n$ denote the set of all Dyck paths of length $2n$ and set $\Dyck = \bigcup_{n \geq 0} \Dyck_n$. For $P \in \Dyck_n$, let $|P| = 2n$ denote the length of $P$. There are many statistics associated with Dyck paths in the literature. Here we will consider several Dyck path statistics that are intimately related with the $\inv$ statistic on pattern avoiding permutations. 

Let $P = s_1 \cdots s_{2n} \in \Dyck_n$. A \textit{double rise} in $P$ is a subword $UU$ and a \textit{double fall} in $P$ a subword $DD$. Let $\dr(P)$ and $\dd(P)$ respectively denote the number of double rises and double falls in $P$. A \textit{peak} in $P$ is an up-step followed by a down-step, in other words, a subword of the form $UD$. Let $\Peak(P) = \{ p : s_p s_{p+1} = UD \}$ denote the set of indices of the peaks in $P$ and $\npea(P) = |\Peak(P)|$. For $p \in \Peak(P)$ define the \textit{position} of $p$, $\pos_P(p)$, resp. the \textit{height} of $p$, $\height_P(p)$, to be the $x$ resp. $y$-coordinate of its highest point.
A \textit{valley} in $P$ is a down step followed by an up step, in other words, a subword of the form $DU$. Let $\Valley(P) = \{ v : s_vs_{v+1} = DU \}$ denote the set of indices of the valleys in $P$ and $\nval(P) = |\Valley(P)|$. For $v \in \Valley(P)$ define the \textit{position} of $v$, $\pos_P(v)$, resp. the \textit{height} of $v$, $\height_P(v)$, to be the $x$ resp. $y$-coordinate of its lowest point. For each $v \in \Valley(P)$, there is a corresponding \textit{tunnel} which is the subword $s_i \cdots s_v$ of $P$ where $i$ is the step after the first intersection of $P$ with the line $y = \height_P(v)$ to the left of step $v$ (see Figure \ref{fig:masstunnel}). The length, $v-i$, of a tunnel is always an even number. Let $\Tunnel(P) = \{ (i,j) : s_i \cdots s_j \text{ tunnel in } P \}$ denote the set of pairs of beginning and end indices of the tunnels in $P$. Cheng et.al. \cite{CEKS} define the statistics \textit{sumpeaks} and \textit{sumtunnels} given respectively by
\begin{align*}
\displaystyle \spea(P) &= \sum_{p \in \Peak(P)} (\height_P(p)-1), \\ \stun(P) &= \sum_{(i,j) \in \Tunnel(P)} (j-i)/2.
\end{align*} \noindent  
Let $\Ustep(P) = \{ i : s_i = U \}$ denote the indices of the set of $U$-steps in $P$ and $\Dstep(P) = \{ i: s_i = D \}$ the set of indices of the $D$-steps in $P$. Given $i \in [2n]$ define the \textit{height} of the step $i$ in $P$, $\height_P(i)$, to be the $y$-coordinate of its lowest point. Define the statistics \textit{sumups} and \textit{sumdowns} by
\begin{align*}
\sht(P) &= \sum_{i \in \Ustep(P)} \lceil \height_P(i)/2 \rceil \\
\sdowns(P) &= \sum_{i \in \Dstep(P)} \lfloor \height_P(i)/2 \rfloor
\end{align*}
Define the \textit{area} of $P$, denoted $\area(P)$, to be the number of complete $\sqrt{2} \times \sqrt{2}$ tiles that fit between $P$ and the $x$-axis (cf \cite{KMPW}).

\begin{figure}[ht]
\begin{tikzpicture}[scale=.4]
\tikzstyle{every node}=[font=\tiny] \draw[step=1,color=gray] (0,0)
grid (10,4);
\fill [pink,xshift=1cm,yshift=1cm,rotate=-45 ] (0,0) rectangle (1.442,1.442);
\fill [pink,xshift=2cm,yshift=2cm,rotate=-45 ] (0,0) rectangle (1.442,1.442);
\fill [pink,xshift=3cm,yshift=1cm,rotate=-45 ] (0,0) rectangle (1.442,1.442);
\fill [pink,xshift=4cm,yshift=2cm,rotate=-45 ] (0,0) rectangle (1.442,1.442);
\fill [pink,xshift=5cm,yshift=3cm,rotate=-45 ] (0,0) rectangle (1.442,1.442);
\fill [pink,xshift=5cm,yshift=1cm,rotate=-45 ] (0,0) rectangle (1.442,1.442);
\fill [pink,xshift=6cm,yshift=2cm,rotate=-45 ] (0,0) rectangle (1.442,1.442);
\fill [pink,xshift=7cm,yshift=1cm,rotate=-45 ] (0,0) rectangle (1.442,1.442);
 
\draw [thick, black] 
(0,0) -- ++(3, 3) -- ++(1,-1) -- ++(2,2) -- ++(4,-4);
\draw [thin, black] (1,1) -- ++(1,-1) -- ++(1,1) -- ++(-1,1);
\draw [thin, black] (3,1) -- ++(1,1);
\draw [thin, black] (3,1) -- ++(1,-1) -- ++(1,1) -- ++(1,1) -- ++(1,1);
\draw [thin, black] (4,2) -- ++(1,-1);
\draw [thin, black] (5,3) -- ++(1,-1);
\draw [thin, black] (6,0) -- ++(1,1) -- ++(1,1);
\draw [thin, black] (5,1) -- ++(1,-1);
\draw [thin, black] (6,2) -- ++(1,-1);
\draw [thin, black] (7,1) -- ++(1,-1) -- ++(1,1);

\end{tikzpicture}
\caption{$\area(P) = 8$.}
\label{fig:area}
\end{figure}

Burstein and Elizalde \cite{BE} define a statistic which they call the \lq mass\rq \hspace{1pt} of $P$. We will define two versions of it, one pertaining to the $U$-steps and one to the $D$-steps. For each $i \in \Ustep(P)$ define the \textit{mass} of $i$, $\mass_P(i)$, as follows. If $s_{i+1} = D$, then $\mass_P(i) = 0$. If $s_{i+1} = U$, then $P$ has a subword of the form $s_iUP_1DP_2D$ where $P_1,P_2$ are Dyck paths and we define $\mass_P(i) = |P_2|/2$. In other words, the mass is half the number of steps between the matching $D$-steps of two consecutive $U$-steps. The part of the Dyck path $P$ contributing to the mass of each of the first three $U$-steps is highlighted with matching colours in Figure \ref{fig:masstunnel}. Define 
$$
\Umass(P) = \sum_{i \in \Ustep(P)} \mass_P(i).
$$
The statistic $\Umass$ coincides with the mass statistic defined by Burstein and Elizalde \cite{BE}. Analogously if $i \in \Dstep(P)$, define $\mass_P(i) = 0$ if $s_{i-1} = U$. If $s_{i-1} = D$, then $P$ has a subword of the form $UP_1UP_2Ds_i$ where $P_1,P_2$ are Dyck paths and we define $\Dmass(s) = |P_1|/2$. In other words, the mass is half the number of steps between the matching $U$-steps of two consecutive $D$-steps. Define
$$
\Dmass(P) = \sum_{i \in \Dstep(P)} \mass_P(i).
$$
\begin{figure}[ht]
\begin{tikzpicture}[scale=.4]
\tikzstyle{every node}=[font=\tiny] \draw[step=1,color=gray] (0,0)
grid (24,4); \draw [thick,red] (0,0)--(1,1); 
\draw [thick,green] (1,1)--(2,2);
\draw [thick,blue] (2,2)--(3,3); 
\draw [thick,black] (3,3)--(4,4);
\draw [thick, black] 
(4,4.05) -- ++(1, -1); 
\draw [thick, blue] (5,3) --
++(1, 1.05) -- ++(1,-1.05); 
\draw [thick, black]
(7,3)-- ++(1, -1); 
\draw [thick, green]
(8,2)-- ++(1, 1.05) -- ++(1, -1); 
\draw [thick, black]
(10,2)-- ++(1,-1); 
\draw [thick, red]
(11,1)-- ++(2,2) -- ++(1,-1) -- ++(1,1) --  ++(2, -2); 
\draw [thick, black]
(17,1)-- ++(1,
-1) -- ++(2, 2) -- ++(1, -1) -- ++(1,1) -- ++(2,-2); \draw [dashed]
(2,2.05)--(8,2.05) (3,3.05)--(5,3.05) (1,1.05)--(11,1.05) (0,0.05)--(18,0.05) (12,2.05)--(14,2.05) (19,1.05)--(21,1.05);
\end{tikzpicture}
\caption{The tunnel lengths of a Dyck path (indicated with dashes) and the mass associated with the first three up-steps is highlighted with matching colours.}
\label{fig:masstunnel}
\end{figure}
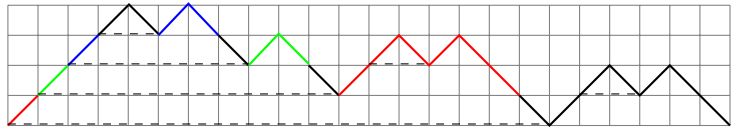

Next we give a description of the various Dyck path bijections that will be referenced.
The \textit{standard bijection} $\Delta : \Sym_n(231) \to \Dyck_n$ can be defined recursively by
$$
\Delta(\sigma) = U \Delta(\sigma_1) D \Delta(\sigma_2),
$$
where $\sigma = 213[1,\sigma_1, \sigma_2]$. We will also (with abuse of notation) define the \textit{standard bijection} $\Delta: \Sym_n(312) \to \Dyck_n$ recursively by 
$$
\Delta(\sigma) = \Delta(\sigma_1) U \Delta(\sigma_2)D,
$$
where $\sigma = 132[\sigma_1,\sigma_2,1]$.
There is also a non-recursive description of $\Delta$ due to Krattenthaler, see \cite{Kra}.  
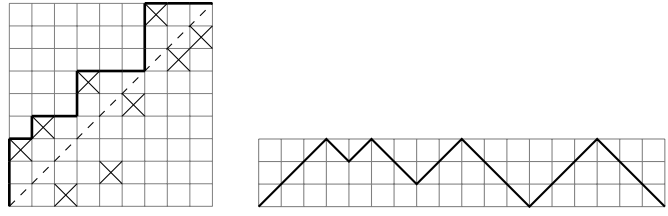
\begin{figure} 
\begin{tikzpicture}[scale=.3]
  \catalannumber{0,0}{9}{1,1,1,0,1,0,0,1,1,0,0,0,1,1,1,0,0,0}{(1,3),(2,4),(3,1),(4,6),(5,2),(6,5),(7,9),(8,7),(9,8)};
\end{tikzpicture} \hspace{10pt}
\begin{tikzpicture}[scale=.3]
\tikzstyle{every node}=[font=\tiny] \draw[step=1,color=gray] (0,0)
grid (18,3); \draw [thick] (0,0)--(3,3) -- ++(1,-1) -- ++(1,1) -- ++(2,-2) -- ++(2,2) -- ++(3,-3) -- ++(3,3) -- ++(3,-3);
\end{tikzpicture}
\caption{The Dyck path $\Gamma(\sigma)$ corresponding to $\sigma =341625978$.}
\label{kratbij}
\end{figure} \noindent

We now define another well-known map $\Gamma: \Sym_n(321) \to \Dyck_n$ due to Krattenthaler \cite{Kra} which also appears in a slightly different form in the work of Elizalde \cite{Eli}. Let $\sigma \in \Sym_n(321)$ and consider an $n \times n$ array with crosses in positions $(i, \pi_i)$ for $1 \leq i \leq n$, where the first coordinate is the column number, increasing from left to right, and the second coordinate is the row number, increasing from bottom to top. Consider the path with north and east steps from the lower-left corner to the upper-right corner of the array, whose right turns occur at the crosses $(i, \sigma_i)$ with $\sigma_i \geq i$. Define $\Gamma(\sigma)$ to be the Dyck path obtained from this path by reading a $U$-step for every north step and a $D$-step for every east step of the path. The bijection is illustrated in Figure \ref{kratbij}.
\begin{theorem}[Krattenthaler \cite{Kra}, Elizalde \cite{Eli}] \label{krat} \noindent \newline
For each $n \geq 1$ the map $\Gamma: \Sym_n(321) \to \Dyck_n$ is a bijection.
\end{theorem}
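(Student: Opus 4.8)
The plan is to prove $\Gamma$ is a bijection by extracting an explicit inverse from the two‑increasing‑subsequence structure of $321$‑avoiding permutations recorded in Lemma \ref{321decomp}. The starting observation is that for $\sigma\in\Sym_n(321)$ the weak excedances — the positions $i$ with $\sigma(i)\ge i$ — coincide exactly with the positions of the left‑to‑right maxima of $\sigma$. One inclusion needs no pattern avoidance: if $\sigma(i)$ is a left‑to‑right maximum then $\sigma(1),\dots,\sigma(i)$ are $i$ distinct positive integers, so $\sigma(i)=\max_{j\le i}\sigma(j)\ge i$. For the converse, suppose $\sigma(i)\ge i$ while $\sigma(i)$ is not a left‑to‑right maximum, and pick $k<i$ with $\sigma(k)>\sigma(i)\ge i$; then the two values $\sigma(k)>\sigma(i)\ge i$ already sit among positions $1,\dots,i$, so at most $i-2$ of those positions carry a value $<i$, whence some value $w<i$ occupies a position $\ell>i$, and $\sigma(k)>\sigma(i)>w$ at $k<i<\ell$ is a forbidden $321$. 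Listing the weak excedances as $e_1<\dots<e_m$, it follows that $\sigma(e_1)<\dots<\sigma(e_m)$ strictly increases (being a subsequence of the increasing left‑to‑right maxima), that $e_1=1$, and that $e_m=\sigma^{-1}(n)$ with $\sigma(e_m)=n$; dually, the deficiency positions are exactly the non‑left‑to‑right‑maxima, so by Lemma \ref{321decomp} the values occupying them increase from left to right. Thus the crosses $(i,\sigma(i))$ with $\sigma(i)\ge i$ form a chain that strictly increases in both coordinates, which is exactly what makes the lattice path defining $\Gamma(\sigma)$ exist and be unique.

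The second step is to check that $\Gamma(\sigma)\in\Dyck_n$. Unwinding the definition, the lattice path performs its north steps precisely in the columns $e_1-1,\dots,e_m-1$, reaching height $\sigma(e_i)$ at the end of column $e_i-1$; hence
$$
\Gamma(\sigma)=U^{\sigma(e_1)}\,D^{\,e_2-e_1}\,U^{\,\sigma(e_2)-\sigma(e_1)}\,D^{\,e_3-e_2}\cdots D^{\,e_m-e_{m-1}}\,U^{\,\sigma(e_m)-\sigma(e_{m-1})}\,D^{\,n-e_m+1}.
$$
This word has $n$ up‑steps and $n$ down‑steps by construction, so it remains to see that every prefix contains at least as many $U$'s as $D$'s; since the surplus only grows on $U$‑runs it is enough to check this at the ends of the $D$‑runs, i.e. that $\sigma(e_k)\ge e_{k+1}-1$ for $k<m$ (the final run needs $\sigma(e_m)=n$, already known). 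But positions $1,\dots,e_{k+1}-1$ carry $e_{k+1}-1$ distinct values, each at most $\max_{j\le e_{k+1}-1}\sigma(j)$, and this maximum is attained at a left‑to‑right maximum, of which only $e_1,\dots,e_k$ lie among these positions, so it equals $\sigma(e_k)$; hence $\sigma(e_k)\ge e_{k+1}-1$.

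The third step recovers $\sigma$ from $P=\Gamma(\sigma)$, which gives injectivity: from $P$ one reads off the columns $c_1<\dots<c_m$ in which $P$ takes north steps, together with the heights $h_1<\dots<h_m$ attained at the ends of those columns, and then $\sigma$ has weak excedances $e_i=c_i+1$ with $\sigma(e_i)=h_i$ while the remaining positions receive the remaining values in increasing order (forced by the structure in the first step). Applied to an arbitrary $P\in\Dyck_n$, this reconstruction is well defined — using that $P$ stays weakly above the diagonal one gets $h_i\ge c_i+1$, so the $h_i$ genuinely are weak‑excedance values — and its output is an interleaving of two increasing subsequences, hence avoids $321$; one then checks that $P\mapsto\sigma$ is two‑sided inverse to $\Gamma$. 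Alternatively, once $\Gamma$ is known to map $\Sym_n(321)$ into $\Dyck_n$ and to be injective, the equality $|\Sym_n(321)|=C_n=|\Dyck_n|$ finishes the proof at once.

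The only genuinely delicate point is the second step: the verification that $\Gamma(\sigma)$ never dips below the diagonal is where $321$‑avoidance actually enters, through the value‑counting estimate above, and one must be careful with the lengths of the $D$‑runs, in particular with the final $D^{\,n-e_m+1}$, whose length relies on $\sigma(e_m)=n$. The structural dichotomy of the first step and the reconstruction bookkeeping of the third are routine by comparison.
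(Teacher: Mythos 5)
Your proposal is correct, and it is worth noting that the paper itself offers no proof of this statement: Theorem \ref{krat} is quoted from Krattenthaler and Elizalde, with only the construction of $\Gamma$ described. Your argument is therefore a self-contained substitute rather than a variant of anything in the text. It follows the standard route: the identification of weak excedances with left-to-right maxima in $\Sym_n(321)$ is proved cleanly (the counting argument producing the forbidden $321$ is right), your explicit word $U^{\sigma(e_1)}D^{e_2-e_1}U^{\sigma(e_2)-\sigma(e_1)}\cdots D^{n-e_m+1}$ is the correct unwinding of the paper's description (it reproduces the example $\sigma=341625978\mapsto U^3DU D^2U^2D^3U^3D^3$ of Figure \ref{kratbij}), and the inequality $\sigma(e_k)\ge e_{k+1}-1$, obtained by bounding the $e_{k+1}-1$ values in the initial segment by the last available left-to-right maximum, is exactly where $321$-avoidance enters and is the right way to get the Dyck condition. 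Injectivity via reading off the north-run columns and heights, together with Lemma \ref{321decomp} forcing the remaining values into increasing order, is also fine. The only point you leave compressed is the two-sided-inverse check in the third step: to conclude $\Gamma(\sigma)=P$ for the reconstructed $\sigma$ one must verify that the positions \emph{not} of the form $c_i+1$ are genuine deficiencies (otherwise $\Gamma$ would create extra right turns); this follows from the Dyck inequality $h_k\ge c_{k+1}$, since a value placed at a non-selected position $j$ with $e_k<j<e_{k+1}$ is among the $j-k$ smallest values missing from $\{h_1,\dots,h_m\}$ and hence is less than $j$. Since you also offer the cardinality shortcut $|\Sym_n(321)|=C_n=|\Dyck_n|$ (which the paper's introduction licenses via \cite{Kit}), this gap is harmless, and either completion yields a full proof.
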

\begin{theorem}[Cheng-Elizalde-Kasraoui-Sagan \cite{CEKS}] \label{invspea} \noindent \newline
We have $\inv(\sigma) = \spea(\Gamma(\sigma))$ and $\lrmax(\sigma) = \npea(\Gamma(\sigma))$ for all $\sigma \in \Sym_n(321)$. 
\end{theorem}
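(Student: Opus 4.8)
The plan is to reduce both equalities to a single arithmetic description of the inversion statistic on $\Sym_n(321)$, after extracting the combinatorial meaning of the peaks of $\Gamma(\sigma)$. First I would record an elementary structural fact: for $\sigma \in \Sym_n(321)$ the entry $\sigma_i$ is a left-to-right maximum if and only if $\sigma_i \ge i$. The forward implication is immediate, since the $i-1$ entries preceding a left-to-right maximum in position $i$ are all smaller, so $\sigma_i \ge i$. For the converse, if $\sigma_i \ge i$ but $\sigma_i$ is not a left-to-right maximum, choose $k < i$ with $\sigma_k > \sigma_i$; at most $i-1$ of the values $1,\dots,\sigma_i$ can lie in positions $1,\dots,i$, so some value $v < \sigma_i$ occupies a position $> i$, and then $\sigma_k > \sigma_i > v$ is an occurrence of $321$, a contradiction. (This also follows from Lemma \ref{321decomp}.) In particular the crosses $(i,\sigma_i)$ with $\sigma_i \ge i$ --- which by definition are the right turns of the lattice path defining $\Gamma(\sigma)$ --- are exactly the crosses in left-to-right-maximum positions.

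Next I would read off the peaks of $\Gamma(\sigma)$ and their heights directly from the array. Since a north step reads as $U$ and an east step as $D$, a right turn of the path is precisely a factor $UD$, i.e.\ a peak of $\Gamma(\sigma)$, and conversely; hence the peaks of $\Gamma(\sigma)$ are in bijection with the crosses $(i,\sigma_i)$ satisfying $\sigma_i \ge i$, and the previous paragraph gives $\npea(\Gamma(\sigma)) = \lrmax(\sigma)$. For the height, unwinding Krattenthaler's construction shows that the right turn at a cross $(i,\sigma_i)$ sits at the lattice point $(i-1,\sigma_i)$: the path ascends the left edge $x = i-1$ of the $i$-th column up to row $\sigma_i$ and then turns east. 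At that point the path has taken $\sigma_i$ up-steps and $i-1$ down-steps, so the corresponding peak has height $\sigma_i - (i-1) = \sigma_i - i + 1$. Summing the contributions of all peaks,
$$
\spea(\Gamma(\sigma)) \;=\; \sum_{i\,:\,\sigma_i \ge i}\big((\sigma_i - i + 1) - 1\big) \;=\; \sum_{i\,:\,\sigma_i \ge i}(\sigma_i - i).
$$

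It then remains to prove that $\inv(\sigma) = \sum_{i\,:\,\sigma_i \ge i}(\sigma_i - i)$ for every $\sigma \in \Sym_n(321)$. The key point is that in any inversion $(i,j)$ of a $321$-avoiding permutation the larger entry $\sigma_i$ must be a left-to-right maximum --- otherwise some $\sigma_k > \sigma_i$ with $k < i$ would make $\sigma_k\sigma_i\sigma_j$ an occurrence of $321$ --- so every inversion has its left endpoint at a left-to-right-maximum position. Conversely, when $\sigma_i$ is a left-to-right maximum, the entries lying to its right that are smaller than $\sigma_i$ are precisely the values among $1,\dots,\sigma_i-1$ not used in the $i-1$ positions preceding $i$, of which there are exactly $\sigma_i - i$. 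Summing over left-to-right maxima yields $\inv(\sigma) = \sum_{i\,:\,\sigma_i\ge i}(\sigma_i - i)$, and comparing with the displayed formula gives $\inv(\sigma) = \spea(\Gamma(\sigma))$.

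The one genuinely delicate step is the middle one: identifying the exact lattice point $(i-1,\sigma_i)$ at which the right turn associated with a weak-exceedance cross occurs, since the height tally --- and hence the match with $\inv$ --- rests on this. Everything else is elementary counting, and a single worked example, e.g.\ $\sigma = 341625978$ of Figure \ref{kratbij} (four peaks, each of height $3 = \sigma_i - i + 1$, giving $\spea = 8 = \inv(\sigma)$ and $\npea = 4 = \lrmax(\sigma)$), already exhibits all the relevant phenomena.
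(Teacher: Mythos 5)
Your argument is correct, but note that the paper itself offers no proof to compare against: Theorem \ref{invspea} is imported verbatim from Cheng--Elizalde--Kasraoui--Sagan \cite{CEKS}, so what you have produced is a self-contained verification of a cited result rather than an alternative to an in-paper argument. The verification is sound. The characterization of left-to-right maxima of a $321$-avoiding permutation as the weak exceedances $\sigma_i \ge i$ is proved correctly in both directions (only the converse uses $321$-avoidance), and it immediately gives $\npea(\Gamma(\sigma)) = \lrmax(\sigma)$, since by construction the peaks of $\Gamma(\sigma)$ are exactly the right turns of the lattice path, one per weak-exceedance cross. The delicate point you flag, that the right turn attached to the cross $(i,\sigma_i)$ occurs at the lattice point $(i-1,\sigma_i)$ after $\sigma_i$ north steps and $i-1$ east steps, is the correct reading of the construction as stated in the paper (a monotone path from $(0,0)$ to $(n,n)$ is determined by its NE corners, which here are $(i-1,\sigma_i)$ over weak exceedances, with both coordinates strictly increasing since the left-to-right maxima increase), and Figure \ref{kratbij} confirms the convention; this yields peak height $\sigma_i - i + 1$ and hence $\spea(\Gamma(\sigma)) = \sum_{\sigma_i \ge i}(\sigma_i - i)$. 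Finally, the identity $\inv(\sigma) = \sum_{\sigma_i \ge i}(\sigma_i - i)$ is established correctly: the larger entry of any inversion must be a left-to-right maximum (else a $321$ occurrence arises), and a left-to-right maximum $\sigma_i$ is inverted with exactly $(\sigma_i - 1) - (i-1) = \sigma_i - i$ later entries because all $i-1$ earlier entries are smaller than $\sigma_i$. This weak-exceedance bookkeeping is essentially the same mechanism used in \cite{CEKS}, so your proof buys a short, readable derivation of the two statistic identities without having to chase the reference, at the cost of re-deriving facts the paper is content to quote.
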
 \noindent
Next we define a Dyck path bijection $\Psi: \Dyck_n \to \Dyck_n$ due to Cheng et.al. \cite{CEKS} that is weight preserving between the statistics $\spea$ and $\stun$. 

First we define a bijection $\delta: \bigsqcup_{k=0}^{n-1} \Dyck_k \times \Dyck_{n-k-1} \to \Dyck_n$ as follows. Given two Dyck paths 
$$
Q = U^{a_1}D^{b_1}U^{a_2}D^{b_2} \cdots U^{a_s}D^{b_s} \in \Dyck_k \text{ and } R = U^{c_1}D^{d_1}U^{c_2}D^{d_2} \cdots U^{c_t}D^{d_t} \in \Dyck_{n-k-1}
$$
where all exponents are positive, define $\delta(Q,R)$ by
$$
\delta(Q,R) =  U^{a_1+1}D^{b_1+1}U^{a_2}D^{b_2} \cdots U^{a_s}D^{b_s},
$$
if $R = \emptyset$ and define
$$
\delta(Q,R) = U^{a_1+1}DU^{a_2}D^{b_1}U^{a_3}D^{b_2} \cdots U^{a_s}D^{b_{s-1}}U^{c_1}D^{b_s+d_1}U^{c_2}D^{d_2} \cdots U^{c_t}D^{d_t},
$$
if $R \neq \emptyset$. If $Q = \emptyset$ the same definition works with the convention that $a_1 = b_1 = 0$.

Let $P \in \Dyck_n$ and $(Q,R) = \delta^{-1}(P)$. Define $\Psi(\emptyset) = \emptyset$ and for $n \geq 1$
$$
\Psi(P) = \begin{cases} UD\Psi(Q) & \text{ if } R = \emptyset \\ U \Psi(R) D & \text{ if } Q = \emptyset, \\ U\Psi(Q)D\Psi(R) & \text{ otherwise.} \end{cases}
$$
\begin{theorem}[Cheng-Elizalde-Kasraoui-Sagan \cite{CEKS}] \label{speastun}
The map $\Psi: \Dyck_n \to \Dyck_n$ is a bijection such that $\spea(P) = \stun(\Psi(P))$ and $\npea(P) = n- \nval(\Psi(P))$ for all $P \in \Dyck_n$. In particular
$$
\sum_{P \in \Dyck_n} q^{\spea(P)} t^{\npea(P)} = \sum_{P \in \Dyck_n} q^{\stun(P)} t^{n - \nval(P)}
$$
for all $P \in \Dyck_n$.
\end{theorem}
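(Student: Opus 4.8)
The plan is to prove the two statistic identities $\spea(P)=\stun(\Psi(P))$ and $\npea(P)=n-\nval(\Psi(P))$ simultaneously by induction on $n$, using the recursive definitions of $\delta$ and $\Psi$ together with two auxiliary families of recursions. The fact that $\Psi$ is a bijection was already recorded when $\Psi$ was introduced; in any case the same case analysis furnishes its inverse explicitly — given $W\in\Dyck_n$ take its first-return decomposition $W=UW_1DW_2$, invert $\Psi$ on the strictly shorter factors $W_1,W_2$ by induction (using empty strings where appropriate), and apply $\delta$ — so I would mention this in passing. The base case $n=0$ is immediate since $\Psi(\emptyset)=\emptyset$ and all four statistics vanish.

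The first auxiliary ingredient records how $\spea$ and $\npea$ behave under $\delta$. For $Q\in\Dyck_k$ and $R\in\Dyck_{n-k-1}$ both nonempty, writing $Q=U^{a_1}D^{b_1}\cdots U^{a_s}D^{b_s}$ and inspecting the block form of $\delta(Q,R)$, one sees that the leading inserted $U$ raises the first peak of $Q$ by $1$, the rightward shift of each $D$-block raises the $i$-th peak of $Q$ (for $i\ge 2$) by the preceding block size $b_{i-1}$, and the peaks contributed by $R$ are each raised by the last block size $b_s$ of $Q$; summing the resulting peak-height contributions and using the Dyck-path balance $\sum_i a_i=\sum_i b_i=k$ for $Q$, the block sizes cancel and one obtains
\[
\spea(\delta(Q,R))=\spea(Q)+\spea(R)+k+1,\qquad \npea(\delta(Q,R))=\npea(Q)+\npea(R),
\]
together with the degenerate variants $\spea(\delta(Q,\emptyset))=\spea(Q)+1$ and $\npea(\delta(Q,\emptyset))=\npea(Q)$ for $Q\neq\emptyset$; $\delta(\emptyset,R)=UDR$, so $\spea(\delta(\emptyset,R))=\spea(R)$ and $\npea(\delta(\emptyset,R))=\npea(R)+1$; and $\delta(\emptyset,\emptyset)=UD$. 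The second ingredient is the elementary concatenation recursion on the target side: for a first-return decomposition $W=UW_1DW_2$,
\[
\nval(W)=\nval(W_1)+\nval(W_2)+[\,W_2\neq\emptyset\,],\qquad \stun(W)=\stun(W_1)+\stun(W_2)+[\,W_2\neq\emptyset\,]\bigl(1+\tfrac12|W_1|\bigr),
\]
which follows by tracking valleys and their tunnels: every valley of $W_1$ or of $W_2$ keeps its tunnel half-length inside $W$, while a nonempty $W_2$ produces exactly one new valley at the origin level whose tunnel reaches back to the start of $W$ and hence has half-length $1+|W_1|/2$.

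With these in hand the inductive step splits into the three clauses defining $\Psi$, indexed by $(Q,R)=\delta^{-1}(P)$. In each clause I would apply the target-side recursion to $\Psi(P)$ — whose first-return factors are $\Psi(Q)$ and/or $\Psi(R)$, or empty — substitute the inductive hypotheses $\stun(\Psi(Q))=\spea(Q)$, $\stun(\Psi(R))=\spea(R)$, $\nval(\Psi(Q))=k-\npea(Q)$, $\nval(\Psi(R))=(n-k-1)-\npea(R)$, and compare with the $\delta$-recursion for $\spea(P)$ and $\npea(P)$. In the generic clause $Q,R\neq\emptyset$, where $\Psi(P)=U\Psi(Q)D\Psi(R)$ and $|\Psi(Q)|/2=k$, this gives $\stun(\Psi(P))=\spea(Q)+\spea(R)+(1+k)=\spea(P)$ and $\nval(\Psi(P))=(k-\npea(Q))+((n-k-1)-\npea(R))+1=n-\npea(P)$, exactly as required; the clauses $R=\emptyset$ (where $\Psi(P)=UD\Psi(Q)$) and $Q=\emptyset\neq R$ (where $\Psi(P)=U\Psi(R)D$) are verified the same way and are shorter. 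Finally, summing $q^{\spea(P)}t^{\npea(P)}=q^{\stun(\Psi(P))}t^{n-\nval(\Psi(P))}$ over $P\in\Dyck_n$ and using that $\Psi$ is a bijection yields the displayed identity.

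The step I expect to be the main obstacle is the first auxiliary recursion, specifically verifying that the per-peak height increments introduced by the $D$-block shift in the definition of $\delta$ add up to exactly $k+1$ regardless of the internal block structure of $Q$ and $R$; this is the one place where the balance $\sum_i a_i=\sum_i b_i$ for the Dyck path $Q$ is essential, and once this recursion (and its three degenerate variants) is pinned down the remaining case-matching against the concatenation recursions is routine bookkeeping.
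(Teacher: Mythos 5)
This statement is quoted by the paper from Cheng--Elizalde--Kasraoui--Sagan \cite{CEKS}; the paper itself gives no proof, so there is nothing in-paper to compare against, and your self-contained inductive argument goes beyond what the paper records. Your structure is sound and the computation checks out: the source-side recursions $\spea(\delta(Q,R))=\spea(Q)+\spea(R)+k+1$, $\npea(\delta(Q,R))=\npea(Q)+\npea(R)$ (with the degenerate variants you list), together with the first-return recursions $\nval(UW_1DW_2)=\nval(W_1)+\nval(W_2)+[W_2\neq\emptyset]$ and $\stun(UW_1DW_2)=\stun(W_1)+\stun(W_2)+[W_2\neq\emptyset]\bigl(1+\tfrac12|W_1|\bigr)$, do match exactly across the three clauses defining $\Psi$, and the identification of $\Psi^{-1}$ via first-return decomposition is valid because the three clauses are distinguished by which of $W_1,W_2$ is empty (given that $\delta$ is a bijection, which the paper asserts).

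One local slip, precisely at the step you flagged as the main obstacle: in $\delta(Q,R)$ with $Q,R\neq\emptyset$ it is not true that the peaks contributed by $R$ are \emph{each} raised by $b_s$; only the first $R$-peak is raised by $b_s$ (its height becomes $b_s+c_1$, since $a_1+\cdots+a_s-(b_1+\cdots+b_{s-1})=b_s$), while the later $R$-peaks keep their heights because the following down-block is enlarged to $D^{b_s+d_1}$. As you state it, the increments would sum to $1+(b_1+\cdots+b_{s-1})+t\,b_s$, which is not $k+1$ in general. The correct accounting is: first peak of $Q$ up by $1$, the $i$-th peak of $Q$ up by $b_{i-1}$ for $i\geq 2$, first peak of $R$ up by $b_s$, all other $R$-peaks unchanged, giving total increment $1+b_1+\cdots+b_s=k+1$. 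With this correction your stated recursion is exactly right and the rest of the induction, including the $\npea$/$\nval$ bookkeeping and the final summation over the bijection, goes through as written.
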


We will now interpret $\mad$ over both $\Sym_n(231)$ and $\Sym_n(312)$ in terms of Dyck path statistics under $\Delta$. The following theorem is a straightforward modification of Theorem 3.11 in \cite{BE}.
\begin{theorem} \label{madmass}
For all $\sigma \in \Sym_n(231)$, $\pi \in \Sym_n(312)$ and $P \in \Dyck_n$ we have
\begin{enumerate}[label=(\roman*)]
\item \label{madmass:first}
$\mad(\sigma) =\Umass(\Delta(\sigma)) + \dr(\Delta(\sigma))$,
\item \label{madmass:second}
$\mad(\pi) = 2\Dmass(\Delta(\pi)) + \dd(\Delta(\pi))$,
\item \label{madmass:third}
a bijection $\Theta : \Dyck_n \to \Dyck_n$ such that $\sht(P) = \Umass(\Theta(P)) + \dr(\Theta(P))$.
\end{enumerate}
\end{theorem}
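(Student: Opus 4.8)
The plan is to prove (i) and (ii) by induction on $n$ using the recursive descriptions of the two standard bijections $\Delta$, and to deduce (iii) by building $\Theta$ recursively, in the same spirit as the map $\Psi$ of Theorem~\ref{speastun}. The first thing I would record is that on each of the two relevant classes the statistic $\mad$ collapses dramatically: since $2\underline{31}$ and $\underline{31}2$ are specializations of the classical patterns $231$ and $312$, we have $(2\underline{31})\sigma = 0$ for $\sigma\in\Sym_n(231)$ and $(\underline{31}2)\pi = 0$ for $\pi\in\Sym_n(312)$, and since $(\underline{21})\tau = \des(\tau)$ in general, the expression of $\mad$ in Table~\ref{mafunc} reduces to $\mad(\sigma) = (\underline{31}2)\sigma + \des(\sigma)$ for $\sigma\in\Sym_n(231)$ and to $\mad(\pi) = 2(2\underline{31})\pi + \des(\pi)$ for $\pi\in\Sym_n(312)$.

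For (i), decompose $\sigma = 213[1,\sigma_1,\sigma_2]$ with $\sigma_1,\sigma_2\in\Sym(231)$, so that $\Delta(\sigma) = U\Delta(\sigma_1)D\Delta(\sigma_2)$. Every descent of $\sigma$ — and therefore, being adjacent, every $\underline{31}2$-occurrence of $\sigma$ — is internal to $\sigma_1$ or to $\sigma_2$, with the sole exception of the boundary descent between the leading letter of $\sigma$ and $\sigma_1$: it is present exactly when $\sigma_1\neq\emptyset$, and the legal ``$2$''s for it are precisely the entries of $\sigma_1$ exceeding $\sigma_1(1)$. Hence $\mad(\sigma) = \mad(\sigma_1) + \mad(\sigma_2) + [\sigma_1\neq\emptyset]\,(1 + c(\sigma_1))$, where $c(\sigma_1)$ counts the entries of $\sigma_1$ larger than its first entry. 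On the Dyck side, under $(P_1,P_2)\mapsto UP_1DP_2$ the double rises are those of $P_1$ and of $P_2$ plus one new $UU$ exactly when $P_1\neq\emptyset$, while the only $U$-step mass that changes is that of the new leading $U$-step, which by the first-return factorization $P_1 = UP_aDP_b$ equals $|P_b|/2$. This gives $(\Umass+\dr)(UP_1DP_2) = (\Umass+\dr)(P_1) + (\Umass+\dr)(P_2) + [P_1\neq\emptyset]\,(1 + |P_b|/2)$. The two recursions coincide because, writing $\sigma_1 = 213[1,\sigma_{11},\sigma_{12}]$, one has $P_b = \Delta(\sigma_{12})$ and $c(\sigma_1) = |\sigma_{12}| = |P_b|/2$; together with the trivial case $n\leq 1$, this proves (i). Part (ii) is the mirror image: write $\pi = 132[\pi_1,\pi_2,1]$, so $\Delta(\pi) = \Delta(\pi_1)U\Delta(\pi_2)D$; now the only boundary descent sits between $\pi_2$ and the final letter, its $2\underline{31}$-occurrences count the entries of $\pi_2$ below $\pi_2(|\pi_2|)$, and one obtains $\mad(\pi) = \mad(\pi_1) + \mad(\pi_2) + [\pi_2\neq\emptyset]\,(2\pi_2(|\pi_2|) - 1)$. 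On the other side, the recursion for $2\Dmass+\dd$ under $(P_1,P_2)\mapsto P_1UP_2D$ gains a new double fall exactly when $P_2\neq\emptyset$ together with a new final-$D$ mass equal to $|P_c|/2$, where $P_2 = P_cUP_dD$ is the last-return factorization; since $|P_c|/2 = \pi_2(|\pi_2|) - 1$, the correction $2(\pi_2(|\pi_2|)-1) + 1$ matches that of $\mad$, and (ii) follows.

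For (iii) I would again use the first-return factorization $P = UQDR$. Unwinding the definitions shows that $\sht$ satisfies $\sht(UQDR) = \sht(Q) + \sht(R) + e(Q)$, where $e(Q)$ is the number of $U$-steps of $Q$ whose lowest point is at even height, whereas the recursion for $\Umass+\dr$ established in (i) reads $(\Umass+\dr)(UQDR) = (\Umass+\dr)(Q) + (\Umass+\dr)(R) + [Q\neq\emptyset] + t(Q)$, where $t(Q)$ is the number of $U$-steps of $Q$ lying strictly after its first return to height $0$. Since $e(Q)$ and $[Q\neq\emptyset] + t(Q)$ do not agree in general (already $Q = UDUUDD$ gives $2$ versus $3$), the naive choice $\Theta(UQDR) = U\Theta(Q)D\Theta(R)$ cannot work: $\Theta$ must reassemble the recursively transformed pieces through a \emph{non-standard} decomposition, exactly as $\Psi$ does through the map $\delta$. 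Concretely, I would look for a length-preserving bijection $\Dyck_k\times\Dyck_{n-k-1}\to\Dyck_n$ under which the statistic ``$U$-steps of the first factor at even height'' is traded for ``one new leading double rise together with the $U$-steps past the first return of the first factor'', and then define $\Theta$ recursively over it. Producing that redistribution of arches — i.e.\ pinning down the combinatorial move that converts the $e$-recursion into the $t$-recursion — is the crux of the proof; once it is in hand, verifying $\sht(P) = \Umass(\Theta(P)) + \dr(\Theta(P))$ is a routine induction along the lines of (i) and (ii).
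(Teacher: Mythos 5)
Your parts (i) and (ii) are correct and take essentially the paper's route: the paper also reduces $\mad$ to $(\underline{31}2)+\des$ on $\Sym_n(231)$ and to $2\cdot(2\underline{31})+\des$ on $\Sym_n(312)$, and runs the same induction through the nested inflations (written there as $\sigma=42135[1,1,\sigma_3,\sigma_4,\sigma_2]$ and $\pi=13542[\pi_1,\pi_3,\pi_4,1,1]$), which is exactly your first-return/last-return bookkeeping of the extra mass $|P_b|/2$, resp.\ $|P_c|/2$, and the single new double rise/double fall.

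Part (iii), however, is not proved. The statement asserts the \emph{existence} of a bijection $\Theta$, and you stop precisely where the content lies: you derive the recursions $\sht(UQDR)=\sht(Q)+\sht(R)+e(Q)$ and $(\Umass+\dr)(UQDR)=(\Umass+\dr)(Q)+(\Umass+\dr)(R)+[Q\neq\emptyset]+t(Q)$ (both correct), note they differ, and then say that finding the right reassembly ``is the crux of the proof.'' That is an admission that the construction is missing, not a proof. (As a side point, your argument that the naive $\Theta(UQDR)=U\Theta(Q)D\Theta(R)$ ``cannot work'' is also not airtight: under the natural induction the obstruction you need is $e(Q)\neq[\Theta(Q)\neq\emptyset]+t(\Theta(Q))$, whereas your example $Q=UDUUDD$ only compares $e(Q)$ with $[Q\neq\emptyset]+t(Q)$.) The paper closes this gap with an explicit map: extend $\Theta$ multiplicatively over the primitive components of $P$, set $\Theta(\emptyset)=\emptyset$, $\Theta(UD)=UD$, and for a primitive $P=UUQ_1DUQ_2D\cdots UQ_sDD$ define $\Theta(P)=U^{s+1}D\Theta(Q_1)D\Theta(Q_2)D\cdots\Theta(Q_s)D$. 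Then both statistics satisfy the same recursion: since lifting a $U$-step by two levels adds exactly $1$ to $\lceil h/2\rceil$, one gets $\sht(P)=\sum_i\sht(Q_i)+\tfrac12\sum_i|Q_i|+s$, while on the image the run $U^{s+1}$ interleaved with the $D$'s contributes $s$ double rises and masses $|\Theta(Q_i)|/2$, so $\Umass(\Theta(P))+\dr(\Theta(P))$ obeys the identical recursion; bijectivity also falls out of the recursive definition. Without this (or some equivalent explicit construction), your write-up establishes (i) and (ii) but leaves (iii) open.
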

\begin{proof} \noindent \newline
\begin{enumerate}[label=(\roman*)]
\item Let $\sigma \in \Sym_n(231)$ and decompose $\sigma = 213[1, \sigma_1, \sigma_2]$.
If we assume $\sigma_1 \neq \emptyset$, then we may further decompose $\sigma_1$ and write $\sigma = 42135[1,1,\sigma_3,\sigma_4,\sigma_2]$. In particular $\restr{(\underline{31}2)}{(\sigma(1), \sigma(2), \dsh)} \sigma = |\sigma_4|$. Since 
$$
\Delta(\sigma) = UU \Delta(\sigma_3)D \Delta(\sigma_4)D \Delta(\sigma_2),
$$ 
we have by induction that 
\begin{align*}
\Umass(\Delta(\sigma)) &= \Umass(\Delta(\sigma_3)) + \Umass(\Delta(\sigma_4)) + \Umass(\Delta(\sigma_2)) + |\Delta(\sigma_4)|/2 \\ &= (\underline{31}2)\sigma_3 + (\underline{31}2)\sigma_4 + (\underline{31}2)\sigma_2 + |\sigma_4|  \\ &= (\underline{31}2)\sigma.
\end{align*} \noindent
and 
\begin{align*}
\dr(\Delta(\sigma)) &= \dr(\Delta(\sigma_1)) + \dr(\Delta(\sigma_2)) + 1 \\ &= \des(\sigma_1) + \des( \sigma_2) + 1 \\ &= \des(\sigma).
\end{align*} \noindent
Hence $\Umass(\Delta(\sigma)) + \dr(\Delta(\sigma)) = \mad(\sigma)$.
\item Let $\pi \in \Sym_n(312)$ and decompose $\pi = 132[\pi_1, \pi_2, 1]$. Assuming $\pi_2 \neq \emptyset$ we may write $\pi = 13542[\pi_1,\pi_3,\pi_4,1,1]$. In particular $\restr{(2\underline{31})}{(\dsh, \pi(n-1), \pi(n))} \pi = |\pi_3|$. Since
$$
\Delta(\pi) = \Delta(\pi_1) U \Delta(\pi_3)U \Delta(\pi_4) DD,
$$
it follows by an induction similar to part \ref{madmass:first} that $\Dmass(\Delta(\pi)) = (2\underline{31})\pi$ and $\dd(\Delta(\pi)) = \des(\pi)$. Hence $2 \Dmass(\Delta(\pi)) + \dd(\Delta(\pi)) = \mad(\pi)$.
\item Construct a recursive bijection $\Theta: \Dyck_n \to \Dyck_n$ as follows. Let $P \in \Dyck_n$. If $P = P_1 \cdots P_r$ where $P_i$ is a Dyck path returning to the x-axis for the first time at its endpoint, then define $\Theta(P) = \Theta(P_1) \cdots \Theta(P_r)$. Assume therefore $r=1$ and write 
$$
P = UU Q_1 DU Q_2D \cdots U Q_s DD,
$$ 
provided $P \neq UD$, where $Q_1, \dots, Q_s$ are Dyck paths. Define
$$
\Theta(P) = \begin{cases} \emptyset & \text{ if } P = \emptyset, \\ UD & \text{ if } P = UD, \\ U^{s+1}D\Theta(Q_1)D\Theta(Q_2)D \cdots \Theta(Q_s)D & \text{ otherwise. }  \end{cases}
$$
The map $\Theta$ is clearly a bijection.
Note that
\begin{align*}
\displaystyle \sht(P) &= \sum_{i=1}^s \sht(Q_i) + \frac{1}{2} \sum_{i=1}^s |Q_i| + s, \\ \Umass(\Theta(P)) + \dr(\Theta(P)) &= \sum_{i=1}^s (\Umass(\Theta(Q_i))+ \dr(\Theta(Q_i)) + \frac{1}{2} \sum_{i=1}^s |\Theta(Q_i)| + s.
\end{align*} \noindent
Hence by induction it follows that $\Umass(\Theta(P)) + \dr(\Theta(P)) = \sht(P)$.
\end{enumerate}
\end{proof}
\begin{theorem} \label{stunmass}
There exists a bijection $\Phi:\Dyck_n \to \Dyck_n$ such that $\stun(P) = \Umass(\Phi(P)) + \dr(\Phi(P))$. In particular, for any $n \geq 1$,
$$
\sum_{P \in \Dyck_n} q^{\stun(P)} = \sum_{P \in \Dyck_n} q^{\Umass(P) + \dr(P)}.
$$
\end{theorem}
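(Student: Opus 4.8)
The plan is to obtain $\Phi$ by composing bijections already available in the paper together with one new one. By Theorem~\ref{speastun} the inverse $\Psi^{-1}$ carries $\stun$ to $\spea$; by Theorems~\ref{krat} and~\ref{invspea} the inverse $\Gamma^{-1}$ carries $\spea$ to $\inv$ on $\Sym_n(321)$; and by Theorem~\ref{madmass}\ref{madmass:first} the standard bijection $\Delta:\Sym_n(231)\to\Dyck_n$ carries $\mad$ to $\Umass+\dr$. Hence it suffices to produce a bijection $h:\Sym_n(321)\to\Sym_n(231)$ with $\inv(\sigma)=\mad(h(\sigma))$ for all $\sigma$; granting this, set $\Phi:=\Delta\circ h\circ\Gamma^{-1}\circ\Psi^{-1}$, so that for $P\in\Dyck_n$ and $\sigma:=\Gamma^{-1}(\Psi^{-1}(P))\in\Sym_n(321)$ one has
$$
\Umass(\Phi(P))+\dr(\Phi(P))=\mad(h(\sigma))=\inv(\sigma)=\spea(\Psi^{-1}(P))=\stun(P),
$$
and summing over $P$ gives the generating-function identity. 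Equivalently one may package the new map as a bijection $\eta:\Dyck_n\to\Dyck_n$ with $\spea(P)=\sht(\eta(P))$ and take $\Phi=\Theta\circ\eta\circ\Psi^{-1}$, where $\Theta$ is the bijection of Theorem~\ref{madmass}\ref{madmass:third}.

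To construct $h$ I would argue recursively. On the $231$-side, the block decomposition $\tau=213[1,\tau_1,\tau_2]$ underlying the standard bijection $\Delta$ (with $\tau_1\in\Sym_j(231)$ and $\tau_2\in\Sym_{|\tau|-1-j}(231)$) gives, as in the proof of Theorem~\ref{madmass}\ref{madmass:first}, a recursion for $\mad=(\underline{31}2)+(\underline{21})$ on $\Sym(231)$ which besides $\mad(\tau_1),\mad(\tau_2)$ also depends on the size of the last block in the decomposition of $\tau_1$; equivalently, one must carry along an auxiliary statistic refining $\mad$. On the $321$-side, Lemma~\ref{321decomp} lets one recover $\sigma\in\Sym_n(321)$ from the positions of its left-to-right maxima together with the increasing word $[n]\setminus\LRMax(\sigma)$, and this produces a compatible decomposition of $\sigma$ under which $\inv$ satisfies a recursion involving $\lrmax$. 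The plan is then to define $h$ by matching the two decompositions size-for-size and to prove by induction on $n$ that the refined pairs $(\inv,\lrmax)$ on $\Sym_n(321)$ and $(\mad,\mathrm{aux})$ on $\Sym_n(231)$ correspond; here $\lrmax$ plays for $\inv$ the same auxiliary role that the extra block-size statistic plays for $\mad$ (and, on the Dyck side, the role that $\npea$ and $\dr$ play).

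The main obstacle is precisely this matching of recursions. Neither $\inv$ over $\Sym_n(321)$ nor $\mad$ over $\Sym_n(231)$ obeys a clean one-parameter recursion---their common distribution is neither the Carlitz--Riordan $C_n(q)$ nor $\tilde{C}_n(q)$---so one has to identify the correct auxiliary statistic on the $231$-side and then check term by term that the two bivariate recursions agree; I would first compute the cases $n\le 3$ by hand to fix the correspondence, and then carry out the general induction, taking care of the empty-block boundary cases in each decomposition. Finally, for the generating-function identity alone one can bypass $h$ altogether: by Theorems~\ref{speastun},~\ref{invspea} and~\ref{krat} one has $\sum_{P\in\Dyck_n}q^{\stun(P)}=\sum_{\sigma\in\Sym_n(321)}q^{\inv(\sigma)}$, and by Theorem~\ref{madmass}\ref{madmass:first} one has $\sum_{P\in\Dyck_n}q^{\Umass(P)+\dr(P)}=\sum_{\sigma\in\Sym_n(231)}q^{\mad(\sigma)}$, so it is enough to prove that $\inv$ over $\Sym_n(321)$ and $\mad$ over $\Sym_n(231)$ are equidistributed, which follows as soon as both are seen to satisfy the same refined block recursion.
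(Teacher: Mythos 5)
There is a genuine gap, and it is one of logical dependency as well as of substance. Your construction of $\Phi$ rests entirely on a bijection $h:\Sym_n(321)\to\Sym_n(231)$ with $\inv(\sigma)=\mad(h(\sigma))$ (or, for the generating-function identity alone, on the equidistribution of $\inv$ over $\Sym_n(321)$ with $\mad$ over $\Sym_n(231)$). But in this paper that statement is exactly Corollary \ref{inv321mad231}, and it is \emph{deduced from} Theorem \ref{stunmass} via the composition $\Delta^{-1}\circ\Phi\circ\Psi\circ\Gamma$; so unless you supply an independent proof of it, your argument runs the implication backwards. You do not supply one: the recursive construction of $h$ is explicitly deferred ("the main obstacle"), the auxiliary statistic on the $231$-side is never identified, and the claim that $\inv$ on $\Sym_n(321)$ and $\mad$ on $\Sym_n(231)$ satisfy "the same refined block recursion" is asserted rather than verified. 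This is not a routine verification to leave open: $321$-avoiding permutations do not decompose by inflation the way $231$-avoiding ones do, and the distribution of $\inv$ over $\Sym_n(321)$ is precisely the subtle object studied in \cite{CEKS} by continued-fraction and Dyck-path methods, not by a clean bivariate block recursion. (An honest non-bijective route inside the paper would be to combine Theorem \ref{cfrak} with \cite[Corollary 8.6]{CEKS}, as noted in the remark after Theorem \ref{cfrak}, but you neither invoke nor reprove that; and it would only give the summed identity, not an explicit $\Phi$.)

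By contrast, the paper's proof is self-contained and never leaves the world of Dyck paths: writing $P=UP_1D\cdots UP_{m-1}DUP_mD$, it defines $\Phi$ recursively by $\Phi(P)=UD\,\Phi(P_1)$ when $m=1$ and $\Phi(P)=UUU^{m-2}D^{m-2}D\,\Phi(P_1)\cdots\Phi(P_{m-1})D\,\Phi(P_m)$ when $m>1$, then checks $\stun(P)=\Umass(\Phi(P))+\dr(\Phi(P))$ by induction using how $\stun$ and $\Umass$ behave under this decomposition. If you want to salvage your approach, you must either carry out the matching of refined recursions in full (identifying the auxiliary statistic and treating the boundary cases), or construct $\Phi$ directly on $\Dyck_n$ as the paper does; as written, the key new content of the theorem is the step you have postponed.
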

\begin{proof}
Let $P \in \Dyck_n$ and consider the decomposition
$$
P = UP_1D \cdots UP_{m-1}DU P_m D,
$$
where $P_1, \dots, P_{m-1},P_m$ are (possibly empty) Dyck paths. Define $\Phi: \Dyck_n \to \Dyck_n$ recursively by
$$
\Phi(P)= \begin{cases} \emptyset, & \text{ if } P = \emptyset \\ UD\Phi(P_1), & \text{ if } m = 1 \\ UU U^{m-2}D^{m-2} D \Phi(P_1) \cdots \Phi(P_{m-1}) D \Phi(P_m), & \text{ if } m > 1   \end{cases}
$$
It is not difficult to verify by induction that $\Phi$ is a bijection from the recursion.
It remains to show that $\stun(P) = \Umass(\Phi(P)) + \dr(\Phi(P))$. We argue by induction on $n$. The statement holds for $P = \emptyset$. If $m=1$, then by induction
\begin{align*}
\displaystyle \stun(P) &= \stun(P_1) \\ &= \Umass(\Phi(P_1)) + \dr(\Phi(P_1)) \\ &= \Umass(UD\Phi(P_1)) + \dr(UD\Phi(P_1)) \\ &= \Umass(\Phi(P)) + \dr(\Phi(P)).
\end{align*} \noindent
Suppose $m > 1$. Note that
$$
\Umass(UUP_0DP_1 \cdots P_{m-1}DP_{m}) = \sum_{i=0}^m \Umass(P_i) + \sum_{i=1}^{m-1} |P_i|/2
$$
and that $\Umass(U^kD^k) = 0$ for all $k \geq 0$.
Hence by induction
\begin{align*}
\displaystyle \stun(P) &= \stun(P_m) +  \sum_{i=1}^{m-1} (\stun(P_i) + (|P_i|+2)/2) \\ &= \Umass(\Phi(P_m)) + \dr(\Phi(P_m)) + \sum_{i=1}^{m-1} \left [ (\Umass(\Phi(P_i)) + \dr(\Phi(P_i)) + (|P_i|+2)/2) \right ] \\ &= \left ( \Umass(U^{m-2}D^{m-2}) + \sum_{i=1}^{m} \Umass(\Phi(P_i)) + \sum_{i=1}^{m-1}|\Phi(P_i)|/2  \right ) \\ & \hspace*{0.5cm} + \left ( (m-1) + \sum_{i=1}^m \dr(\Phi(P_i)) \right ) \\ &= \Umass(\Phi(P)) + \dr(\Phi(P)),
\end{align*} \noindent
as required.
\end{proof} \noindent

\begin{corollary} \label{inv321mad231}
For any $n \geq 1$,
$$
\sum_{\sigma \in \Sym_n(231)} q^{\mad(\sigma)} = \sum_{\sigma \in \Sym_n(321)} q^{\inv(\sigma)}.
$$
\end{corollary}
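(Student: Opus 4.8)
The plan is to chain together, reading from the right-hand side, the four bijections and equidistributions assembled in this section. By Theorem~\ref{krat} the Krattenthaler--Elizalde map $\Gamma\colon\Sym_n(321)\to\Dyck_n$ is a bijection, so the sum over $\sigma\in\Sym_n(321)$ may be re-indexed by $P=\Gamma(\sigma)\in\Dyck_n$; by Theorem~\ref{invspea} the exponent $\inv(\sigma)$ then becomes $\spea(P)$, giving
$$
\sum_{\sigma\in\Sym_n(321)}q^{\inv(\sigma)}=\sum_{P\in\Dyck_n}q^{\spea(P)}.
$$

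Next I would invoke the bijection $\Psi\colon\Dyck_n\to\Dyck_n$ of Theorem~\ref{speastun}, which satisfies $\spea(P)=\stun(\Psi(P))$; re-indexing the sum by $\Psi$ yields $\sum_{P\in\Dyck_n}q^{\spea(P)}=\sum_{P\in\Dyck_n}q^{\stun(P)}$. Theorem~\ref{stunmass} then gives directly
$$
\sum_{P\in\Dyck_n}q^{\stun(P)}=\sum_{P\in\Dyck_n}q^{\Umass(P)+\dr(P)}.
$$
Finally, since the standard bijection $\Delta\colon\Sym_n(231)\to\Dyck_n$ is a bijection, I would pull this last sum back to permutations, $\sum_{P\in\Dyck_n}q^{\Umass(P)+\dr(P)}=\sum_{\sigma\in\Sym_n(231)}q^{\Umass(\Delta(\sigma))+\dr(\Delta(\sigma))}$, and apply Theorem~\ref{madmass}\ref{madmass:first}, which identifies the exponent $\Umass(\Delta(\sigma))+\dr(\Delta(\sigma))$ with $\mad(\sigma)$. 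Concatenating the four equalities gives the corollary.

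There is no genuine obstacle here: all the combinatorial content has already been established in the preceding theorems, and the statement is precisely the assertion that the composite bijection $\Delta^{-1}\circ\Phi\circ\Psi\circ\Gamma\colon\Sym_n(321)\to\Sym_n(231)$ carries $\inv$ to $\mad$ (where $\Phi$ is the map of Theorem~\ref{stunmass}). The only points warranting care are bookkeeping: checking that each cited map is used in the direction stated, and that the auxiliary pair of statistics $(\Umass,\dr)$ appearing in Theorem~\ref{stunmass} is exactly the pair appearing in Theorem~\ref{madmass}\ref{madmass:first}, so that the two identities compose.
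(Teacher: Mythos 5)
Your proposal is correct and matches the paper's own argument: the paper composes exactly the same chain $\Delta^{-1}\circ\Phi\circ\Psi\circ\Gamma$ of weight-preserving bijections from Theorems \ref{krat}, \ref{invspea}, \ref{speastun}, \ref{stunmass} and Theorem \ref{madmass}\ref{madmass:first}, stated there as a commuting diagram rather than as a chain of re-indexed sums. Nothing further is needed.
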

\begin{proof}
By Theorem \ref{krat}, Theorem \ref{invspea}, Theorem \ref{speastun}, Theorem \ref{madmass} \ref{madmass:first} and Theorem \ref{stunmass} we have the following diagram of weight preserving bijections
\begin{center}
\begin{tikzcd}
(\Sym_n(321),\inv) \arrow[r, "\Gamma"] \arrow[d, "\phi"]
& (\Dyck_n,\spea) \arrow[r, "\Psi"] & (\Dyck_n, \stun) \arrow[d, "\Phi"] \\
(\Sym_n(231), \mad) \arrow[rr, "\Delta"]
&  & (\Dyck_n, \Umass + \dr)
\end{tikzcd}
\end{center} \noindent
Thus 
$$
\phi =  \Delta^{-1} \circ \Phi \circ \Psi \circ \Gamma
$$
is our sought bijection with $\inv(\sigma) = \mad(\phi(\sigma))$ for all $\sigma \in \Sym_n(321)$.
\end{proof} \noindent
The following corollary answers a question of Burstein and Elizalde in \cite{BE}.
\begin{corollary} \label{qbureli}
There exists a bijection $\Lambda : \Dyck_n \to \Dyck_n$ such that $\spea(P) = \sht(\Lambda(P))$. In particular for any $n \geq 1$,
$$
\sum_{P \in \Dyck_n} q^{\spea(P)} = \sum_{P \in \Dyck_n} q^{\sht(P)}.
$$
\end{corollary}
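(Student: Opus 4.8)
The plan is to assemble $\Lambda$ as a composition of bijections already constructed in this section, following the same diagram-chasing strategy used in the proof of Corollary \ref{inv321mad231}. The point is that both $\spea$ and $\sht$ have been matched, via bijections of $\Dyck_n$, with the single intermediate statistic $\Umass + \dr$, so no new combinatorics is needed.

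First I would collect the three relevant maps. By Theorem \ref{speastun}, $\Psi \colon \Dyck_n \to \Dyck_n$ is a bijection with $\spea(P) = \stun(\Psi(P))$. By Theorem \ref{stunmass}, $\Phi \colon \Dyck_n \to \Dyck_n$ is a bijection with $\stun(R) = \Umass(\Phi(R)) + \dr(\Phi(R))$ for all $R \in \Dyck_n$. Finally, by Theorem \ref{madmass}\ref{madmass:third}, $\Theta \colon \Dyck_n \to \Dyck_n$ is a bijection with $\sht(S) = \Umass(\Theta(S)) + \dr(\Theta(S))$; equivalently, writing $T = \Theta(S)$, this says $\Umass(T) + \dr(T) = \sht(\Theta^{-1}(T))$ for every $T \in \Dyck_n$.

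Then I would simply chain these identities together: for $P \in \Dyck_n$,
$$
\spea(P) = \stun(\Psi(P)) = \Umass(\Phi(\Psi(P))) + \dr(\Phi(\Psi(P))) = \sht\bigl(\Theta^{-1}(\Phi(\Psi(P)))\bigr).
$$
Hence $\Lambda := \Theta^{-1} \circ \Phi \circ \Psi$ is a bijection $\Dyck_n \to \Dyck_n$ satisfying $\spea(P) = \sht(\Lambda(P))$ for all $P \in \Dyck_n$, and summing $q^{\spea(P)}$ over $P \in \Dyck_n$ yields $\sum_{P \in \Dyck_n} q^{\sht(P)}$, which is the asserted equidistribution. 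There is no real obstacle beyond bookkeeping; the one thing to be careful about is the direction of $\Theta$, since Theorem \ref{madmass}\ref{madmass:third} presents $\sht$ as the source statistic and $\Umass + \dr$ as the target, so it is $\Theta^{-1}$ (not $\Theta$) that must appear in the composition defining $\Lambda$.
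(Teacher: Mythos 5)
Your proposal is correct and is essentially identical to the paper's own proof: the paper also composes the three weight-preserving bijections and defines $\Lambda = \Theta^{-1} \circ \Phi \circ \Psi$, exactly as you do. Your remark about needing $\Theta^{-1}$ rather than $\Theta$ is the one directional subtlety, and you handle it correctly.
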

\begin{proof}
By Theorem \ref{speastun}, Theorem \ref{madmass}  \ref{madmass:third} and Theorem \ref{stunmass} we have the following diagram of weight preserving bijections
\begin{center}
\begin{tikzcd}
(\Dyck_n,\spea) \arrow[r, "\Psi"] \arrow[d, "\Lambda"] & (\Dyck_n, \stun) \arrow[d, "\Phi"]\\   (\Dyck_n, \sht) \arrow[r, "\Theta"] & (\Dyck_n, \Umass + \dr)
\end{tikzcd}
\end{center} \noindent
Hence 
$$
\Lambda = \Theta^{-1} \circ \Phi \circ \Psi
$$ 
is the required bijection.
\end{proof}
\begin{example}
The below diagram shows an example of the intermediate images under the bijections $\phi$ and $\Lambda$ from Corollary \ref{inv321mad231} and Corollary \ref{qbureli}.
\begin{center}
\begin{tikzcd}
451623897 \arrow[mapsto, d, "\phi"]  \arrow[mapsto, r, "\Gamma"]
& \begin{tikzpicture}[scale=.25]
\tikzstyle{every node}=[font=\tiny] \draw[step=1,color=gray] (0,0)
grid (18,4); \draw [thick] (0,0)--(4,4) -- ++(1,-1) -- ++(1,1) -- ++(2,-2) -- ++(1,1) -- ++(3,-3) -- ++(2,2) -- ++(1,-1) -- ++(1,1) -- ++(2,-2);
\end{tikzpicture} \arrow[mapsto, d, "\Lambda"]  \arrow[mapsto, r, "\Psi"] & \begin{tikzpicture}[scale=.25]
\tikzstyle{every node}=[font=\tiny] \draw[step=1,color=gray] (0,0)
grid (18,3); \draw [thick] (0,0)--(2,2) -- ++(1,-1) -- ++(2,2) -- ++(2,-2) -- ++(1,1) -- ++(2,-2) -- ++(3,3) -- ++(2,-2) -- ++(1,1) -- ++(2,-2);
\end{tikzpicture} \arrow[mapsto, d, "\Phi"] 
\\ 
615324978
& 
\begin{tikzpicture}[scale=.25]
\tikzstyle{every node}=[font=\tiny] \draw[step=1,color=gray] (0,0)
grid (18,5); \draw [thick] (0,0)--(4,4) -- ++(1,-1) -- ++(2,2) -- ++(5,-5) -- ++(3,3) -- ++(3,-3);
\end{tikzpicture}

& \arrow[mapsto, l, "\Theta^{-1}"] \arrow[bend left, mapsto, ll, "\Delta^{-1}"]  \begin{tikzpicture}[scale=.25]
\tikzstyle{every node}=[font=\tiny] \draw[step=1,color=gray] (0,0)
grid (18,4); \draw [thick] (0,0)--(2,2) -- ++(1,-1) -- ++(3,3) -- ++(2,-2) -- ++(1,1) -- ++(3,-3) -- ++(2,2) -- ++(1,-1) -- ++(1,1) -- ++(2,-2);
\end{tikzpicture}

\end{tikzcd}
\end{center} \noindent
\end{example}

For each Dyck path $P \in \Dyck_n$, Kim et.al. \cite{KMPW} construct two bijections $\text{DTS}(P, \cdot)$ and $\text{DTR}(P, \cdot)$ from the set of linear extensions of the chord poset of $P$ to the set of cover-inclusive Dyck tilings with lower path $P$ (see \cite{KMPW} for terminology). In the special case where $P = (UD)^n$ and the set of linear extensions is restricted to $\Sym_n(312)$, it follows from \cite[Theorem 2.3]{KMPW} that $\text{DTS}(P, \cdot)$ and $\text{DTR}(P, \cdot)$ induce bijections $\theta_{\text{DTS}}: \Sym_n(312) \to \Dyck_n$ and $\theta_{\text{DTR}}: \Sym_n(312) \to \Dyck_n$. We remark that the restriction is over $\Sym_n(231)$ in \cite{KMPW} due to difference in notation.
By \cite[Theorem 2.4]{KMPW} and \cite[Theorem 6.1]{KMPW} it moreover follows that
\begin{align}
\inv(\sigma) &= \area(\theta_{\text{DTS}}(\sigma)), \label{invarea} \\
\mad(\sigma) &= \area(\theta_{\text{DTR}}(\sigma)) \label{madarea}
\end{align} \noindent
for all $\sigma \in \Sym_n(312)$. Therefore we get a bijection $\theta: \Sym_n(312) \to \Sym_n(312)$ given by
$$
\theta = \theta_{\text{DTS}}^{-1} \circ \theta_{\text{DTR}},
$$
satisfying $\mad(\theta(\sigma)) = \inv(\sigma)$. Hence we obtain the following theorem.
\begin{theorem}[Kim-M\'es\'aros-Panova-Wilson \cite{KMPW}] \label{inv312mad312} \noindent \newline
For any $n \geq 1$,
$$
\sum_{\sigma \in \Sym_n(312)} q^{\mad(\sigma)} = \sum_{\sigma \in \Sym_n(312)} q^{\inv(\sigma)}.
$$
\end{theorem}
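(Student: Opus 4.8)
The plan is to use the pair of area-preserving bijections coming from the Dyck-tiling machinery of Kim, M\'es\'aros, Panova and Wilson, which has already been recalled in the paragraph preceding the statement; essentially no new combinatorics is needed. First I would invoke \cite[Theorem 2.3]{KMPW} to observe that, in the special case $P = (UD)^n$, the maps $\text{DTS}(P,\cdot)$ and $\text{DTR}(P,\cdot)$ restrict to bijections $\theta_{\text{DTS}}\colon \Sym_n(312) \to \Dyck_n$ and $\theta_{\text{DTR}}\colon \Sym_n(312) \to \Dyck_n$, being careful about the notational discrepancy (the cited source phrases the restriction over $\Sym_n(231)$). Then I would appeal to \cite[Theorem 2.4]{KMPW} and \cite[Theorem 6.1]{KMPW}, recorded here as (\ref{invarea}) and (\ref{madarea}), which say precisely that $\theta_{\text{DTS}}$ carries $\inv$ to $\area$ and $\theta_{\text{DTR}}$ carries $\mad$ to $\area$ on $\Sym_n(312)$.

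Next I would assemble the equidistribution. Since $\theta_{\text{DTS}}$ is a bijection with $\inv(\sigma) = \area(\theta_{\text{DTS}}(\sigma))$, re-indexing the sum gives
$$
\sum_{\sigma \in \Sym_n(312)} q^{\inv(\sigma)} = \sum_{P \in \Dyck_n} q^{\area(P)},
$$
and the same reasoning applied to $\theta_{\text{DTR}}$ and $\mad$ yields
$$
\sum_{\sigma \in \Sym_n(312)} q^{\mad(\sigma)} = \sum_{P \in \Dyck_n} q^{\area(P)} ;
$$
comparing the two right-hand sides gives the theorem. Equivalently, the composite $\theta = \theta_{\text{DTS}}^{-1} \circ \theta_{\text{DTR}}$ is an explicit bijection on $\Sym_n(312)$ satisfying $\mad(\theta(\sigma)) = \inv(\sigma)$, which is the slightly stronger bijective form of the statement and the form I would actually record.

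Since both inputs are already proved in \cite{KMPW}, there is no genuine obstacle: the only point demanding care is the bookkeeping around the $231$ versus $312$ convention and confirming that linear extensions of the chord poset of $(UD)^n$ are indexed by $\Sym_n(312)$ in our conventions. I would note in passing that, unlike the bijections of the previous sections, $\theta$ is not produced in a transparent closed form here; making $\theta$ combinatorially explicit, or refining the identity by an additional statistic such as $\des$, is a natural follow-up but is not needed for the theorem as stated.
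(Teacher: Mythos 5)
Your proposal is correct and follows the paper's own route exactly: citing \cite[Theorem 2.3]{KMPW} for the bijections $\theta_{\text{DTS}},\theta_{\text{DTR}}\colon \Sym_n(312)\to\Dyck_n$ at $P=(UD)^n$ (with the $231$/$312$ notational caveat), using \cite[Theorem 2.4]{KMPW} and \cite[Theorem 6.1]{KMPW} to send $\inv$ and $\mad$ to $\area$, and composing to get $\theta=\theta_{\text{DTS}}^{-1}\circ\theta_{\text{DTR}}$ with $\mad(\theta(\sigma))=\inv(\sigma)$. This is precisely the argument the paper records before the theorem, so there is nothing to add.
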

\begin{corollary}
For any $n \geq 1$,
$$
\sum_{P \in \Dyck_n} q^{\area(P)} = \sum_{P \in \Dyck_n} q^{2 \Dmass(P) + \dd(P)}.
$$
\end{corollary}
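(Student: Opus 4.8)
The plan is to read off the corollary by transporting the permutation identity of Theorem~\ref{inv312mad312} to Dyck paths through the bijections already constructed in this section. First I would start from
$$
\sum_{\sigma \in \Sym_n(312)} q^{\inv(\sigma)} = \sum_{\sigma \in \Sym_n(312)} q^{\mad(\sigma)},
$$
which is exactly Theorem~\ref{inv312mad312}.

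Next I would push the left-hand side onto $\Dyck_n$. Since $\theta_{\text{DTS}}: \Sym_n(312) \to \Dyck_n$ is a bijection and $\inv(\sigma) = \area(\theta_{\text{DTS}}(\sigma))$ by (\ref{invarea}), re-indexing the sum over $P = \theta_{\text{DTS}}(\sigma)$ gives
$$
\sum_{\sigma \in \Sym_n(312)} q^{\inv(\sigma)} = \sum_{P \in \Dyck_n} q^{\area(P)}.
$$
For the right-hand side I would use the standard bijection $\Delta: \Sym_n(312) \to \Dyck_n$ together with Theorem~\ref{madmass} \ref{madmass:second}, which says $\mad(\pi) = 2\Dmass(\Delta(\pi)) + \dd(\Delta(\pi))$ for all $\pi \in \Sym_n(312)$; re-indexing over $P = \Delta(\pi)$ yields
$$
\sum_{\sigma \in \Sym_n(312)} q^{\mad(\sigma)} = \sum_{P \in \Dyck_n} q^{2\Dmass(P) + \dd(P)}.
$$
Stringing the three displays together proves the corollary.

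The point of this argument is that it is purely a matter of assembling earlier results, so there is no real obstacle; the only things worth double-checking are that $\theta_{\text{DTS}}$ and $\Delta$ really are bijections with domain $\Sym_n(312)$ and codomain $\Dyck_n$, and that the identities (\ref{invarea}) and Theorem~\ref{madmass} \ref{madmass:second} hold on the whole of $\Sym_n(312)$, both of which were established above. If a bijective formulation is preferred, one can give the explicit map $\Dyck_n \to \Dyck_n$ sending $P$ to $\Delta(\theta(\theta_{\text{DTS}}^{-1}(P)))$, where $\theta = \theta_{\text{DTS}}^{-1} \circ \theta_{\text{DTR}}$ is the bijection underlying Theorem~\ref{inv312mad312}, and trace a single path $P$ through the chain of equalities $\area(P) = \inv(\theta_{\text{DTS}}^{-1}(P)) = \mad(\theta(\theta_{\text{DTS}}^{-1}(P))) = 2\Dmass(\Delta(\theta(\theta_{\text{DTS}}^{-1}(P)))) + \dd(\Delta(\theta(\theta_{\text{DTS}}^{-1}(P))))$ to see that $\area$ is carried to $2\Dmass + \dd$.
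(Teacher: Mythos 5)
Your argument is correct: both sides of the identity are transported to $\Dyck_n$ by genuine bijections ($\theta_{\text{DTS}}$ with (\ref{invarea}), and $\Delta$ with Theorem~\ref{madmass}~\ref{madmass:second}), and the bridge between them is Theorem~\ref{inv312mad312}, so the three displays do assemble into the corollary. The paper's own proof is a shorter version of the same bookkeeping: it skips the detour through $\inv$ entirely and combines Theorem~\ref{madmass}~\ref{madmass:second} directly with (\ref{madarea}), i.e.\ $\area(\theta_{\text{DTR}}(\sigma)) = \mad(\sigma) = 2\Dmass(\Delta(\sigma)) + \dd(\Delta(\sigma))$ for $\sigma \in \Sym_n(312)$, and sums over the two bijections $\theta_{\text{DTR}}$ and $\Delta$. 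Since Theorem~\ref{inv312mad312} is itself deduced in the paper from (\ref{invarea}) and (\ref{madarea}), your route uses the same underlying identities plus one extra link; nothing is lost, but the direct use of (\ref{madarea}) makes the equidistribution of $\area$ with $\mad$ (and hence with $2\Dmass + \dd$) visible in one step, and your closing explicit bijection $P \mapsto \Delta(\theta(\theta_{\text{DTS}}^{-1}(P)))$ can likewise be simplified to $P \mapsto \Delta(\theta_{\text{DTR}}^{-1}(P))$.
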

\begin{proof}
Combine Theorem \ref{madmass} \ref{madmass:second} with (\ref{madarea}).
\end{proof} \noindent
Below we find an interpretation of Theorem \ref{bureli} in terms of Dyck path statistics. Part of the answer is given by a bijection $\Omega:\Sym_n(231) \to \Dyck_n$ due to Stump \cite{Stu} which we now define. Let $\sigma \in \Sym_n(231)$. Suppose $\Des(\sigma) = \{ i_1, \dots, i_k \}$ and $\iDes = \{ i_j' \in \Des(\sigma^{-1}) \}$ such that $i_1 < \cdots < i_k$ and $i_1' < \cdots < i_k'$ (recall that $\des(\sigma) = \des(\sigma^{-1})$ via e.g. the argument in Proposition \ref{majmakA}). For notational purposes set $i_{k+1} = n = i_{k+1}'$. Define a Dyck path $\Omega(\sigma)$ by starting with $i_1'$ $U$-steps, followed by $i_1$ $D$-steps, followed by $i_2'-i_1'$ $U$-steps, followed by $i_2-i_1$ $D$-steps, followed by $i_3'-i_2'$ $U$-steps, and so on, ending with $i_{k+1}-i_k$ $D$-steps. Define the statistic
$$
\beta(P) = \sum_{v \in \Valley(P)} |\{j \leq \pos_P(v) : s_j = D \}|, 
$$
for each Dyck path $P = s_1 \cdots s_{2n} \in \Dyck_n$.

\begin{theorem}[Stump \cite{Stu}] \label{stump}
The map $\Omega: \Sym_n(231) \to \Dyck_n$ is a well-defined bijection such that $\maj(\sigma) = \beta(\Omega(\sigma))$ for all $\sigma \in \Sym_n(231)$. 
\end{theorem}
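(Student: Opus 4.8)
The plan is to check, in order, that $\Omega$ produces a genuine Dyck path, that it transports $\maj$ to $\beta$, and that it is a bijection; the workhorse throughout is the standard decomposition $\sigma = 213[1,\sigma_1,\sigma_2]$ of a $231$-avoiding permutation, where $\sigma_1 \in \Sym_a(231)$, $\sigma_2 \in \Sym_b(231)$ and $n = a+b+1$. When $a = 0$ this is the direct sum $\sigma = 1\oplus\sigma_2$; when $a \ge 1$ one has $\sigma(1) = a+1$, $\sigma^{-1} = 213[\sigma_1^{-1},1,\sigma_2^{-1}]$, and
\begin{gather*}
\Des(\sigma) = \{1\} \sqcup (1+\Des(\sigma_1)) \sqcup ((a+1)+\Des(\sigma_2)),\\
\Des(\sigma^{-1}) = \Des(\sigma_1^{-1}) \sqcup \{a\} \sqcup ((a+1)+\Des(\sigma_2^{-1})),
\end{gather*}
while for $a=0$ simply $\Des(\sigma) = 1+\Des(\sigma_2)$ and $\Des(\sigma^{-1}) = 1+\Des(\sigma_2^{-1})$.

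Write $\Des(\sigma) = \{i_1<\cdots<i_k\}$ and $\Des(\sigma^{-1}) = \{i_1'<\cdots<i_k'\}$, so that $\Omega(\sigma) = U^{i_1'}D^{i_1}U^{i_2'-i_1'}D^{i_2-i_1}\cdots U^{n-i_k'}D^{n-i_k}$. This word is always a lattice path from $(0,0)$ to $(2n,0)$, and it stays weakly above the axis precisely when $i_j \le i_j'$ for all $j$, equivalently when $|\Des(\sigma)\cap[1,t]| \ge |\Des(\sigma^{-1})\cap[1,t]|$ for every $t$. I would prove this inequality by induction on $n$ from the displayed formulas, splitting into $t<a$, $t=a$, $t>a$: for $t>a$ it collapses to the inequality for $\sigma_2$, for $t<a$ one uses that the summand $\{1\}$ in $\Des(\sigma)$ absorbs the unit shift relative to $\Des(\sigma_1)$ together with the inequality for $\sigma_1$, and $t=a$ is a direct count. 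Hence $\Omega(\sigma)\in\Dyck_n$. The statistic identity is then immediate: every displayed $U$- and $D$-block is nonempty, so the valleys of $\Omega(\sigma)$ are exactly the $k$ places where a maximal $D$-block meets the next maximal $U$-block; the $j$-th such valley has $x$-coordinate $i_j'+i_j$, and of the first $i_j'+i_j$ steps exactly $i_j$ are $D$-steps, whence $\beta(\Omega(\sigma)) = \sum_{j=1}^k i_j = \maj(\sigma)$.

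For the bijection it suffices to prove that $\sigma\mapsto(\Des(\sigma),\iDes(\sigma))$ is injective on $\Sym_n(231)$. Indeed, by the previous paragraph its image lies in the set $\mathcal P_n$ of pairs $(D,D')$ of subsets of $[n-1]$ with $|D|=|D'|$ such that the $j$-th smallest element of $D$ is at most that of $D'$ for every $j$; reading the block lengths of a Dyck path as successive differences of its partial $U$- and $D$-counts sets up a bijection $\mathcal P_n\to\Dyck_n$, so $|\mathcal P_n| = C_n = |\Sym_n(231)|$ and any injection into $\mathcal P_n$ is forced to be a bijection, and $\Omega$ is the composite of $\sigma\mapsto(\Des(\sigma),\iDes(\sigma))$ with $\mathcal P_n\to\Dyck_n$. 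Injectivity I would establish by induction on $n$: one reads off from $(\Des(\sigma),\iDes(\sigma))$ that $a = 0$ exactly when $1\notin\Des(\sigma)$, and in that case recovers $(\Des(\sigma_2),\iDes(\sigma_2))$ by subtracting $1$ from both sets; when $1\in\Des(\sigma)$, one recovers $a$ as the least $t\ge 1$ with $|\Des(\sigma)\cap[1,t]| = |\Des(\sigma^{-1})\cap[1,t]|$ and $t+1\notin\Des(\sigma)$ --- the genuine $a$ has this property by the displayed formulas, and no smaller $t$ can, since equality of the partial sums at some $t<a$ forces (via the formulas and the already-proved inequality for $\sigma_1$) that $t\in\Des(\sigma_1)$ and hence $t+1\in\Des(\sigma)$. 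Once $a$ is known, the formulas return $(\Des(\sigma_1),\iDes(\sigma_1))$ and $(\Des(\sigma_2),\iDes(\sigma_2))$, and the inductive hypothesis applies to $\sigma_1$ and $\sigma_2$.

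The step I expect to be the crux is the recovery of the first-block size $a$ --- equivalently of the value $\sigma(1) = a+1$ --- from the two descent sets; once that is in hand, the remaining pieces (the dominance inequality and the $\maj = \beta$ identity) are routine consequences of the two descent-set formulas.
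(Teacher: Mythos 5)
This is a statement the paper quotes from Stump \cite{Stu} and does not prove, so there is no internal proof to compare against; judged on its own, your argument is correct and self-contained. The two displayed descent-set recursions for $\sigma = 213[1,\sigma_1,\sigma_2]$ are right, and from them your three-way split $t<a$, $t=a$, $t>a$ does give the dominance $i_j\le i_j'$, hence well-definedness; since every maximal $U$- and $D$-block of $\Omega(\sigma)$ is nonempty, the $j$-th valley sits at abscissa $i_j'+i_j$ with exactly $i_j$ down-steps before it, so $\beta(\Omega(\sigma))=\sum_j i_j=\maj(\sigma)$ as you say. For bijectivity, your reduction to injectivity of $\sigma\mapsto(\Des(\sigma),\iDes(\sigma))$ is legitimate because the block-length encoding identifies your dominance pairs with $\Dyck_n$ and $|\Sym_n(231)|=C_n$ is quoted in the paper's introduction; and the recovery of $a$ as the least $t\ge1$ with equal partial descent counts and $t+1\notin\Des(\sigma)$ is sound (equality at $t<a$ forces $t\in\Des(\sigma_1)$ by the dominance inequality for $\sigma_1$, hence $t+1\in\Des(\sigma)$), after which the two sub-pairs are read off and induction finishes. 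Two small points you should make explicit rather than tacit: that $\des(\sigma)=\des(\sigma^{-1})$ for $\sigma\in\Sym_n(231)$ (so the two sets really have a common size $k$; this drops out of your own recursions by induction, or from the argument in Proposition \ref{majmakA} as the paper notes), and that the base cases $n\le1$ of your inductions are checked. With those stated, this is a complete proof of the cited theorem, arguably more elementary than invoking Stump's paper, at the mild cost of a counting step in place of an explicit inverse map.
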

\begin{proposition} \label{majdeninterp}
For all $\sigma \in \Sym_n(231)$ and $\pi \in \Sym_n(321)$ we have
\begin{align*}
\maj(\sigma) &= \sum_{v \in \Valley(\Omega(\sigma))} \frac{\pos_{\Omega(\sigma)}(v) - \height_{\Omega(\sigma)}(v)}{2}, \\ \den(\pi) &= \npea(\Gamma(\pi)) + \sum_{p \in \Peak(\Gamma(\pi))} \frac{\pos_{\Gamma(\pi)}(p) - \height_{\Gamma(\pi)}(p)}{2}.
\end{align*}
\end{proposition}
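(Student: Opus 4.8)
The plan is to treat the two formulas separately. The first is essentially an immediate corollary of Theorem~\ref{stump}, while the second calls for a hands-on analysis of Krattenthaler's map $\Gamma$.

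For the $\maj$ formula I would start from $\maj(\sigma)=\beta(\Omega(\sigma))$ (Theorem~\ref{stump}) and simply re-express the summand of $\beta$. The only ingredient needed is the following lattice-path bookkeeping: if $P=s_1\cdots s_{2n}\in\Dyck_n$ and we write $h_j$ for the height of $P$ after its first $j$ steps, then among $s_1,\dots,s_j$ the excess of $U$-steps over $D$-steps is $h_j$ while their total is $j$, so $|\{i\le j:s_i=D\}|=(j-h_j)/2$. Taking $P=\Omega(\sigma)$ and $j=\pos_{\Omega(\sigma)}(v)$ for each $v\in\Valley(\Omega(\sigma))$, where by definition $h_j=\height_{\Omega(\sigma)}(v)$, converts $\beta(\Omega(\sigma))=\sum_{v}|\{i\le\pos_{\Omega(\sigma)}(v):s_i=D\}|$ into $\sum_{v}\tfrac12\bigl(\pos_{\Omega(\sigma)}(v)-\height_{\Omega(\sigma)}(v)\bigr)$, which is the claimed identity.

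For the $\den$ formula I would argue directly from the explicit description of $\Gamma$ on $\Sym_n(321)$. Since the right turns of the north-east path sit at the crosses $(i,\pi(i))$ with $\pi(i)\ge i$, which for a $321$-avoiding permutation are exactly the positions of the left-to-right maxima, the peaks of $\Gamma(\pi)$ are in natural bijection with the excedances of $\pi$. For the peak $p$ produced by a cross $(i,a)$ with $a=\pi(i)$, a count of north-steps (read as $U$) against east-steps (read as $D$) of the path up to that corner should give $\pos_{\Gamma(\pi)}(p)=i+a-1$ and $\height_{\Gamma(\pi)}(p)=a-i+1$, so that $\tfrac12\bigl(\pos_{\Gamma(\pi)}(p)-\height_{\Gamma(\pi)}(p)\bigr)=i-1$. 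Summing over all peaks then gives $\npea(\Gamma(\pi))+\sum_{p\in\Peak(\Gamma(\pi))}\tfrac12\bigl(\pos_{\Gamma(\pi)}(p)-\height_{\Gamma(\pi)}(p)\bigr)=\sum_i i$, the sum ranging over the excedance positions of $\pi$. To finish I would identify this with $\den(\pi)$ using the structure of $321$-avoiding permutations: by Lemma~\ref{321decomp} the non-left-to-right-maxima form an increasing subsequence, and the left-to-right maxima are increasing by definition, which forces both value subwords $\Exc(\pi)$ and $\NExc(\pi)$ to be increasing; hence $\inv(\Exc(\pi))=\inv(\NExc(\pi))=0$ and $\den(\pi)$ collapses to the sum of its excedance positions, matching the Dyck-path side. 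An alternative is an induction that strips off the first block in the decomposition $\pi=213[1,\pi_1,\pi_2]$, using $\Gamma(\pi)=U\Gamma(\pi_1)D\Gamma(\pi_2)$ and the corresponding recursion for $\den$ on $\Sym_n(321)$.

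I expect the genuine work to be in the third paragraph: one must fix the precise convention for which crosses become right turns, translate the non-recursive description of $\Gamma$ into exact values of $\pos_{\Gamma(\pi)}(p)$ and $\height_{\Gamma(\pi)}(p)$ for every peak, and account carefully for the boundary behaviour of the path and for fixed points of $\pi$, so that the telescoping $\npea(\Gamma(\pi))+\sum_p(i_p-1)=\sum_p i_p$ is exactly correct. By contrast the $\maj$ formula is a one-line consequence of Theorem~\ref{stump}, and once the $321$-structure is invoked the reduction of $\den$ to its excedance-position sum is routine.
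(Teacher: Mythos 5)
Your treatment of the first display is fine and is exactly the paper's route: Theorem \ref{stump} together with the bookkeeping identity $|\{i\le j: s_i=D\}|=(j-h_j)/2$ applied at each valley of $\Omega(\sigma)$. The problem sits in the second half, at precisely the step you flagged as needing care but then asserted would work out. The right turns of the lattice path defining $\Gamma$ occur at the crosses $(i,\pi(i))$ with $\pi(i)\ge i$, so the peaks of $\Gamma(\pi)$ are in bijection with the \emph{weak} excedances of $\pi$ (equivalently its left-to-right maxima, which is what makes $\npea(\Gamma(\pi))=\lrmax(\pi)$ in Theorem \ref{invspea}), not with the strict excedances. Your computation $\pos_{\Gamma(\pi)}(p)=i+\pi(i)-1$ and $\height_{\Gamma(\pi)}(p)=\pi(i)-i+1$ is correct, and it shows that the peak coming from the cross in column $i$ contributes exactly $i$ to $\npea(\Gamma(\pi))+\sum_p\tfrac{1}{2}(\pos_{\Gamma(\pi)}(p)-\height_{\Gamma(\pi)}(p))$ whether or not $i$ is a fixed point. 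Since $\den(\pi)$ reduces, as you say, to the sum of the \emph{strict} excedance positions, the two sides differ by the sum of the fixed-point positions of $\pi$, and no boundary bookkeeping repairs this: for $\pi=132\in\Sym_3(321)$ one has $\Gamma(\pi)=UDUUDD$, so the right-hand side equals $2+(0+1)=3$ while $\den(\pi)=2$; for the identity permutation the right-hand side is $\binom{n+1}{2}$ while $\den=0$.

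You should also know this is not a defect of your write-up alone: the paper's own proof makes the same silent identification of all peaks with columns whose cross lies strictly above the diagonal, so the second identity of the proposition is false as stated whenever $\pi$ has a fixed point. Since $\height_{\Gamma(\pi)}(p)=\pi(i)-i+1$, the fixed points of $\pi$ are exactly the peaks of $\Gamma(\pi)$ of height one, and the corrected statement is $\den(\pi)=\sum_{p}\bigl(1+\tfrac{1}{2}(\pos_{\Gamma(\pi)}(p)-\height_{\Gamma(\pi)}(p))\bigr)$ with $p$ ranging only over peaks of height at least two. With that restriction your argument (which is essentially the paper's: reduce $\den$ to excedance positions via the increasing $\Exc$ and $\NExc$ words, then count east steps before each right turn) goes through, and the resulting Dyck path statistic is then genuinely equidistributed with the valley statistic of the first display, as the remark following the proposition requires.
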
 
\begin{proof}
As in \cite[Theorem 2.5]{BE}, observe that 
$$
\den(\pi) = \sum_{\substack{i \in [n] \\ \pi(i) > i}} i,
$$ 
for all $\pi \in \Sym_n(321)$. In the definition of Krattenthaler's bijection $\Gamma$, each $i \in [n]$ such that $\pi(i) > i$ corresponds to a column $i$ in the array containing a box above the main diagonal. In other words it corresponds to the number of east steps in the lattice path that occur to the left of the box. In the Dyck path $\Gamma(\pi) = s_1 \cdots s_{2n}$ this is reflected in the statistic
$$
|\{ j \leq \pos_{\Gamma(\pi)}(p) : s_j = D \}| + 1,
$$  
associated with each $p \in \Peak(\Gamma(\pi))$. We have the following two obvious relations
\begin{align*}
|\{ j \leq \pos_{\Gamma(\pi)}(p) : s_j = U \}| - |\{ j \leq \pos_{\Gamma(\pi)}(p) : s_j = D \}| &= \height_{\Gamma(\pi)}(p), \\
|\{ j \leq \pos_{\Gamma(\pi)}(p) : s_j = U \}| + |\{ j \leq \pos_{\Gamma(\pi)}(p) : s_j = D \}| &= \pos_{\Gamma(\pi)}(p),
\end{align*} \noindent
for each $p \in \Peak(\Gamma(\pi))$. Hence
\begin{align*}
\displaystyle \den(\pi) &= \sum_{p \in \Peak(\Gamma(\pi))} (|\{ j \leq \pos_{\Gamma(\pi)}(p) : s_j = D \}| + 1) \\ &= \npea(\Gamma(\pi)) + \sum_{p \in \Peak(\Gamma(\pi))} \frac{\pos_{\Gamma(\pi)}(p) - \height_{\Gamma(\pi)}(p)}{2}.
\end{align*} \noindent
The first statement in the proposition follows from Theorem \ref{stump} and a similar observation to above.
\end{proof}
\begin{remark}
By Theorem \ref{bureli} the Dyck path statistics in Proposition \ref{majdeninterp} are equidistributed over $\Dyck_n$.
\end{remark}

\section{Equidistributions via generating functions}
In this section we use generating functions to derive equidistributions (albeit non-bijectively) between Mahonian statistics over $\Sym_n(\pi)$. We also provide a recursion for a more general statistic involving arbitrary linear combinations of vincular pattern statistics of length three. This recursion generalizes for instance the recursions in \cite{DDJSS}. 

\begin{theorem} \label{cfrak}
We have
\begin{align} \label{cfrak1}
\sum_{\sigma \in \Sym(231)} q^{\mad(\sigma)} z^{|\sigma|} = \sum_{\sigma \in \Sym(132)} q^{\sista(\sigma)} z^{|\sigma|} = \cfrac{1}{1-\cfrac{z}{1-\cfrac{qz}{1-\cfrac{qz}{1-\cfrac{q^2z}{1-\cfrac{q^2z}{\ddots}}}}}} 
\end{align}
\begin{align} \label{cfrak2}
\sum_{\sigma \in \Sym(312)} q^{\mad(\sigma)} z^{|\sigma|} = \sum_{\sigma \in \Sym(213)} q^{\sista(\sigma)} z^{|\sigma|} = \cfrac{1}{1-\cfrac{z}{1-\cfrac{qz}{1-\cfrac{q^2z}{1-\cfrac{q^3z}{1-\cfrac{q^4z}{\ddots}}}}}}.
\end{align}
\end{theorem}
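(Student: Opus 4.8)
The strategy is to obtain, for each of the four series, a functional equation from a block decomposition of the ambient pattern class together with a single catalytic variable, and then to recognise that equation as the one characterising the relevant continued fraction. A preliminary simplification is essential: the vincular statistics $(2\underline{31})$, $(\underline{31}2)$, $(\underline{13}2)$, $(2\underline{13})$ have underlying classical patterns $231$, $312$, $132$, $213$, hence vanish identically on $\Sym(231)$, $\Sym(312)$, $\Sym(132)$, $\Sym(213)$ respectively. Reading off Table~\ref{mafunc} this gives $\mad=(\underline{31}2)+\des$ on $\Sym(231)$, $\mad=2(2\underline{31})+\des$ on $\Sym(312)$, $\sista=(2\underline{13})+\des$ on $\Sym(132)$ and $\sista=2(\underline{13}2)+\des$ on $\Sym(213)$, so in each of the four cases exactly one vincular pattern of length three survives.

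\textbf{Block decomposition.} Each class carries a standard decomposition by one distinguished letter: $\sigma=213[1,\sigma_1,\sigma_2]$ on $\Sym(231)$ (cut at the first letter), $\sigma=132[\sigma_1,\sigma_2,1]$ on $\Sym(312)$ (cut at the last letter), $\sigma=231[\sigma_1,1,\sigma_2]$ on $\Sym(132)$ (cut at $n$) and $\sigma=231[1,\sigma_1,\sigma_2]$ on $\Sym(213)$ (cut at the first letter), with $\sigma_1,\sigma_2$ in the same class. I would first prove that the surviving statistic is additive over $\sigma_1$ and $\sigma_2$ up to a junction term coming from the single descent (or ascent) created at the cut, and that this junction term is linear in an extreme letter of one block; e.g. on $\Sym(231)$ one gets
$$\mad(\sigma)=\mad(\sigma_1)+\mad(\sigma_2)+[\sigma_1\neq\emptyset]\bigl(|\sigma_1|-\sigma_1(1)+1\bigr),$$
and there are three analogous identities. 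Each is checked by sorting the occurrences of the one surviving vincular pattern according to the position of the distinguished descent/ascent, using that all values of one block lie above (or below) all values of the other.

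\textbf{Catalytic variable and functional equation.} I would then mark the relevant extreme letter of $\sigma$ with one catalytic variable $u$ — taking $|\sigma|-\sigma(1)$ on $\Sym(231)$, $\sigma(|\sigma|)-1$ on $\Sym(312)$, $|\sigma|-\sigma(|\sigma|)$ on $\Sym(132)$ and $\sigma(1)-1$ on $\Sym(213)$ — write down the functional equation for the resulting bivariate series, and specialise $u$ to $1$ and to a suitable monomial in $q$ ($q$ for $\Sym(231)/\mad$ and $\Sym(132)/\sista$, $q^{2}$ for $\Sym(312)/\mad$ and $\Sym(213)/\sista$). This gives a closed system from which the auxiliary series can be eliminated, leaving in every case a functional equation of the shape
$$F(q,z)=\frac{1-qz\,F(q,\alpha z)}{1-z-qz\,F(q,\alpha z)},\qquad \alpha\in\{q,\,q^{2}\},$$
equivalently $F(q,z)=1+zF(q,z)+qz\,F(q,\alpha z)\bigl(F(q,z)-1\bigr)$, with $\alpha=q$ in the first pair of cases and $\alpha=q^{2}$ in the second. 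On the other side, writing $C$ for the continued fraction in \eqref{cfrak1} (resp.\ \eqref{cfrak2}), its self-similar structure gives $C(q,z)=\frac{1-qz\,C(q,\alpha z)}{1-z-qz\,C(q,\alpha z)}$ with $\alpha=q$ (resp.\ $\alpha=q^{2}$), the very same equation; since this equation determines its formal power series solution uniquely by induction on the degree in $z$, $F=C$. The three-term equalities of the theorem follow at once, the left pair sharing the $\alpha=q$ equation and the right pair the $\alpha=q^{2}$ equation.

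\textbf{Main obstacle.} The delicate point is the $\sista$ computations on $\Sym(132)$ and $\Sym(213)$: there the catalytic variable does not decouple after a single substitution, since the auxiliary series reappears at the shifted argument $zu$, so the elimination must first pass through a subsidiary functional equation for the auxiliary series and only then back-substitute into the equation for $F$. Pinning down the junction terms and the conventions at the empty permutation precisely enough that the final equation matches the continued fraction on the nose is where the care is concentrated. (For the two $\mad$ identities one may alternatively route through Dyck paths via Theorem~\ref{madmass} and the classical continued fraction expansion of weighted Dyck paths, but the block decomposition has the advantage of handling all four statements in one stroke.)
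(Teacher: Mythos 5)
Your proposal is correct and is essentially the paper's own argument: decompose each class at the distinguished letter, carry one catalytic variable, specialise it at $1$ and at $q$ (resp.\ $q^2$), eliminate the auxiliary series, and identify the resulting $q$-difference equation $F(q,z)=1+zF(q,z)+qz\,F(q,\alpha z)\bigl(F(q,z)-1\bigr)$ with the self-similar equation of the continued fraction, whose formal power series solution is unique. The only real deviation is that the paper first applies the reverse map, transporting $\sista$ from $\Sym(132)$ and $\Sym(213)$ to $\rsista$ on $\Sym(231)$ and $\Sym(312)$, so that in all four of its computations the catalytic statistic is simply a block size and the elimination is immediate; in your direct treatment of $\Sym(132)$ the catalytic statistic $|\sigma|-\sigma(|\sigma|)$ does not reduce to a block size, and the back-substitution you flag is genuinely needed, but it does go through: the $u=q$ specialisation gives a closed equation for the auxiliary series $F(q,q,z)$, and substituting it into the $u=1$ equation yields exactly the $\alpha=q$ equation above. (Minor point: with your choices, $\Sym(213)$ is of the easy type since $\sigma(1)-1$ equals a block size there, so only the $\Sym(132)$ case is delicate.)
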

\begin{proof}
Note that over $\Sym(231)$ we have $\mad = (\underline{31}2) + (\underline{21})$. The reverse of $\sista$ (i.e. the statistic obtained by reversing all vincular patterns) is given by $\rsista = (\underline{31}2) + (\underline{12})$. Hence (\ref{cfrak1}) is equivalent to proving
$$
\sum_{\sigma \in \Sym(231)} q^{\mad(\sigma)} z^{|\sigma|} = \sum_{\sigma \in \Sym(231)} q^{\rsista(\sigma)} z^{|\sigma|}.
$$
Let $\sigma \in \Sym(231)$ and decompose $\sigma = 213[1,\sigma_1,\sigma_2]$. Then we obtain the recursion
\begin{align*}
\rsista(\sigma) &= [12) \sigma_1 + \delta_{\sigma_2 \neq \emptyset} + \rsista(\sigma_1) + \rsista(\sigma_2), \\ [12)\sigma &= |\sigma_2|,
\end{align*} \noindent
where $\delta$ denotes the Kronecker delta.
Let $F(q,t,z) = \sum_{\sigma \in \Sym(231)} q^{\rsista(\sigma)} t^{[12)\sigma} z^{|\sigma|}$. Then
\begin{align*}
F(q,t,z) &= 1 + z\left ( \sum_{\sigma_1 \in \Sym(231)} q^{\rsista(\sigma_1)} q^{[12)\sigma_1} z^{|\sigma_1|} \right ) \\ & \hspace{10pt}+ qz \left ( \sum_{\sigma_1 \in \Sym(231)} q^{\rsista(\sigma_1)} q^{[12)\sigma_1} z^{|\sigma_1|} \right ) \left ( \sum_{\sigma_2 \in \Sym(231)} q^{\rsista(\sigma_2)} (zt)^{|\sigma_2|} - 1 \right ) \\ &= 1 + zF(q,q,z) + qzF(q,q,z)(F(q,1,zt)-1). 
\end{align*} \noindent
Substituting $t=1$ and $t=q$ we obtain the equation system
\begin{align*}
\begin{cases} F(q,1,z) = 1+ zF(q,q,z) + qzF(q,q,z)(F(q,1,z)-1) \\ F(q,q,z) = 1+  zF(q,q,z)+ qzF(q,q,z)(F(q,1,qz)-1) \end{cases}
\end{align*} \noindent
Eliminating $F(q,q,z)$ and solving for $F(q,1,z)$ we obtain
$$
F(q,1,z) = \cfrac{1}{1-\cfrac{z}{1-qzF(q,1,qz)}},
$$
which gives the continued fraction in the theorem. Similarly putting $G(q,z,t) = \sum_{\sigma \in \Sym(231)} q^{\mad(\sigma)} t^{[12)} z^{|\sigma|}$ we obtain the recursive relation
$$
G(q,t,z) = 1 + zG(q,1,zt) + qzG(q,1,zt)(G(q,q,z)-1).
$$
Substituting $t=1$ and $t=q$ as before and solving for $G(q,1,z)$ we obtain the same continued fraction expansion as above, proving the desired equidistribution.

The second statement in the theorem is proved similarly.
Over $\Sym(312)$ we have $\mad = (2\underline{31}) + (2\underline{31}) + (\underline{21})$. Let $\sigma \in \Sym(312)$ and decompose $\sigma = 132[\sigma_1,\sigma_2,1]$. Then we obtain the recursion
\begin{align*}
\mad(\sigma) &= 2\cdot (12]\sigma_2 + \delta_{\sigma_2 \neq \emptyset} + \mad(\sigma_1) + \mad(\sigma_2), \\ (12]\sigma &= |\sigma_1|.
\end{align*} \noindent
Letting $F(q,t,z) = \sum_{\sigma \in \Sym(312)} q^{\mad(\sigma)} t^{(12]\sigma} z^{|\sigma|}$ we thus obtain
\begin{align*}
F(q,t,z) = 1 + zF(q,1,zt) + qzF(q,1,zt)(F(q,q^2,z)-1).
\end{align*} \noindent
Putting $t=1$ and $t=q^2$, eliminating $F(q,q^2,z)$ from the resulting equation system and solving for $F(q,1,z)$ we obtain the continued fraction expansion in the theorem. 

A similar argument for $\rsista$ over $\Sym(312)$ gives a matching continued fraction expansion. We leave the details to the reader.
\end{proof}

\begin{remark}
In \cite[Corollary 8.6]{CEKS} it was proved that the continued fraction expansion of the generating function of $\inv$ over $\Sym(321)$ matches that of (\ref{cfrak1}). This gives an alternative proof of Corollary \ref{inv321mad231}.
\end{remark} \noindent
\begin{remark}
For $\mad$, the continued fractions (\ref{cfrak1}) and (\ref{cfrak2}) may also be deduced from the following more refined continued fraction in \cite[Theorem 22]{CM} by specializing $(x,y,p,q) = (1,q,0,q)=1$ resp. $(x,y,p,q) = (1,p,p^2,0)$ and using the fact that $\sigma \in \Sym(2\underline{31})$ if and only if $\sigma \in \Sym(231)$ (see \cite[Lemma 2]{Cla}),
\begin{align*}
\sum_{\sigma \in \Sym} x^{\delta_{\sigma \neq \emptyset}+(\underline{12})\sigma} y^{(\underline{21})\sigma}  p^{(2\underline{31})\sigma} q^{(\underline{31}2) \sigma} z^{|\sigma|} = \cfrac{1}{1-\cfrac{x[1]_{p,q}z}{1-\cfrac{y[1]_{p,q}z}{1-\cfrac{x[2]_{p,q}z}{1-\cfrac{y[2]_{p,q}z}{1-\cfrac{x[3]_{p,q}z}{\ddots}}}}}} 
\end{align*} \noindent
where $[n]_{p,q} = q^{n-1} + pq^{n-2} + \cdots + p^{n-2}q + p^{n-1}$ and $\delta$ denotes the Kronecker delta.
\end{remark}

Using almost identical arguments to Theorem \ref{cfrak} we may moreover prove the following equidistributions.
\begin{theorem} \label{madsistfozeeq}
For any $n \geq 1$
\begin{align*}
\sum_{\sigma \in \Sym_n(231)} q^{\mad(\sigma)} &= \sum_{\sigma \in \Sym_n(132)} q^{\sistb(\sigma)} = \sum_{\sigma \in \Sym_n(231)} q^{\sistc(\sigma)}, \\ \sum_{\sigma \in \Sym_n(312)} q^{\mad(\sigma)} &= \sum_{\sigma \in \Sym_n(132)} q^{\fozeb(\sigma)} = \sum_{\sigma \in \Sym_n(231)} q^{\sistb(\sigma)} =  \sum_{\sigma \in \Sym_n(132)} q^{\sistc(\sigma)}.
\end{align*}
\end{theorem}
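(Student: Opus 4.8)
The idea is to run, for each statistic, the continued-fraction computation from the proof of Theorem~\ref{cfrak}. First I would restrict each statistic to the relevant pattern class, discarding every vincular summand whose underlying classical pattern is the one being avoided there. This yields, over $\Sym(231)$,
\[
\mad = (\underline{31}2)+(\underline{21}),\qquad \sistb = 2(\underline{13}2)+(\underline{21}),\qquad \sistc = (\underline{13}2)+(\underline{21}),
\]
over $\Sym(312)$, $\mad = 2(2\underline{31})+(\underline{21})$, and over $\Sym(132)$, $\sistb = (2\underline{31})+(\underline{21})$ while $\sistc = \fozeb = 2(2\underline{31})+(\underline{21})$. The last coincidence already gives $\sum_{\sigma\in\Sym_n(132)}q^{\sistc(\sigma)}=\sum_{\sigma\in\Sym_n(132)}q^{\fozeb(\sigma)}$. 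Since Theorem~\ref{cfrak} identifies $\sum_{\sigma\in\Sym(231)}q^{\mad(\sigma)}z^{|\sigma|}$ with the continued fraction~(\ref{cfrak1}) and $\sum_{\sigma\in\Sym(312)}q^{\mad(\sigma)}z^{|\sigma|}$ with~(\ref{cfrak2}), it remains to prove that $\sum_{\sigma\in\Sym(132)}q^{\sistb(\sigma)}z^{|\sigma|}$ and $\sum_{\sigma\in\Sym(231)}q^{\sistc(\sigma)}z^{|\sigma|}$ equal~(\ref{cfrak1}), and that $\sum_{\sigma\in\Sym(231)}q^{\sistb(\sigma)}z^{|\sigma|}$ and $\sum_{\sigma\in\Sym(132)}q^{\sistc(\sigma)}z^{|\sigma|}$ equal~(\ref{cfrak2}).

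For each of these four series I would use the block decomposition recording the position of the maximum: $\sigma = 132[\sigma_1,1,\sigma_2]$ for $\sigma\in\Sym(231)$ and $\sigma = 231[\sigma_1,1,\sigma_2]$ for $\sigma\in\Sym(132)$, with $\sigma_1,\sigma_2$ again in the relevant class (the block before the maximum carrying one interval of $\{1,\dots,n-1\}$ and the block after the complementary interval, which of the two getting the small values depending on the avoided pattern). In both decompositions $\des(\sigma) = \des(\sigma_1)+\des(\sigma_2)+\delta_{\sigma_2\neq\emptyset}$, and a short count of how the occurrences of the single length-three vincular pattern split among $\sigma_1$, $\sigma_2$ and the maximum yields a recursion of the shape $\stat(\sigma) = \stat(\sigma_1)+\stat(\sigma_2)+c\,|\sigma_i|\,\delta_{\sigma_1\neq\emptyset}\delta_{\sigma_2\neq\emptyset}+\delta_{\sigma_2\neq\emptyset}$, where $i\in\{1,2\}$ and $c$ is the coefficient of the vincular pattern in the statistic, so that $c=1$ for $\sistb$ over $\Sym(132)$ and $\sistc$ over $\Sym(231)$, and $c=2$ for $\sistb$ over $\Sym(231)$ and $\sistc$ over $\Sym(132)$. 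Passing to $F(q,z)=\sum_\sigma q^{\stat(\sigma)}z^{|\sigma|}$ and separating the contribution according to whether the inner block is empty collapses this to
\[
F(q,z) = 1 + zF(q,z) + qz\,F(q,q^{c}z)\,(F(q,z)-1),\qquad\text{equivalently}\qquad F(q,z) = \cfrac{1}{1-\cfrac{z}{1-qzF(q,q^{c}z)}}.
\]
Iterating this identity is precisely the expansion carried out at the end of the proof of Theorem~\ref{cfrak}, and produces the continued fraction~(\ref{cfrak1}) when $c=1$ and~(\ref{cfrak2}) when $c=2$. Combined with the two $\mad$-expansions recalled above, this settles both displayed lines.

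The one place demanding care is the count behind the recursion: for an occurrence of the vincular pattern that straddles the maximum $n$ one must follow how the pattern's adjacency requirement meets the boundary between a block and $n$, which is what fixes the index $i$ and the constant $c$, and, for $\sistb$ and $\sistc$ over $\Sym(231)$, genuinely forces the double indicator $\delta_{\sigma_1\neq\emptyset}\delta_{\sigma_2\neq\emptyset}$; the latter is harmless in the generating function, being absorbed by splitting the sum over whether $\sigma_1$ is empty. Everything afterwards — rearranging to the functional equation, checking that~(\ref{cfrak1}) or~(\ref{cfrak2}) satisfies it, and appealing to uniqueness of the formal power series solution — is routine and mirrors Theorem~\ref{cfrak} step by step.
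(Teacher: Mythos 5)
Your proposal is correct, and it is in substance the approach the paper intends: the paper disposes of this theorem with the single remark that it follows by ``almost identical arguments to Theorem~\ref{cfrak}'', i.e.\ by restricting each statistic to the relevant avoidance class, setting up a block-decomposition recursion, and matching the resulting functional equation to the continued fractions (\ref{cfrak1}) and (\ref{cfrak2}); your restricted forms of $\mad$, $\sistb$, $\sistc$, $\fozeb$ and the target continued fraction for each of the six sums are all the right ones. The one genuine (and pleasant) deviation is the choice of decomposition: Theorem~\ref{cfrak} decomposes by the first letter ($\sigma=213[1,\sigma_1,\sigma_2]$ over $\Sym(231)$) or last letter (over $\Sym(312)$), which forces the cross term to involve a statistic of a block (namely $[12)\sigma_1$ or $(12]\sigma_2$) and hence requires the catalytic variable $t$ and the elimination between $t=1$ and $t=q$ or $t=q^2$; your decomposition at the maximum, $\sigma=132[\sigma_1,1,\sigma_2]$ over $\Sym(231)$ and $\sigma=231[\sigma_1,1,\sigma_2]$ over $\Sym(132)$, makes the cross term depend only on a block length ($|\sigma_2|\delta_{\sigma_1\neq\emptyset}$ resp.\ $|\sigma_1|\delta_{\sigma_2\neq\emptyset}$), so the single-variable equation $F(q,z)=1+zF(q,z)+qzF(q,q^{c}z)\bigl(F(q,z)-1\bigr)$ falls out directly and iterates to (\ref{cfrak1}) for $c=1$ and (\ref{cfrak2}) for $c=2$. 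I checked the four occurrence counts behind your recursions and the values of $c$ in each case; they are correct, so your route buys a shorter computation at no cost, while the paper's route has the advantage of keeping the refinement by $[12)$/$(12]$ available.
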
 \noindent
By combining Theorem \ref{cfrak} and Theorem \ref{madsistfozeeq} with Theorem \ref{inv312mad312} and Corollary \ref{inv321mad231} we may deduce further equidistributions between $\inv$ and the statistics $\fozeb, \sista, \sistb$ and $\sistc$, see Table \ref{equidistsum} in \S \ref{sec::sumconj} for a summary.

For each $k \geq 1$, let $\iota_{k-1} = (12\cdots k)$ denote the statistic that counts the number of increasing subsequences of length $k$ in a permutation. Define $\iota_{-1}$ by $\iota_{-1}(\sigma) = 1$ for all $\sigma \in \Sym$ (i.e. we declare all permutations to have exactly one subsequence of length $0$). 
We will now find a statistic expressed as a linear combination of $\iota_k$'s which is equidistributed with the continued fraction (\ref{cfrak1}). We will derive this statistic using the Catalan continued fraction framework of Br\"and\'en-Claesson-\Stein \cite{BCS}.
Let 
$$
\mathcal{A} = \{ A: \NN \times \NN \to \ZZ : A_{nk} = 0 \text{ for all but finitely many } k \text{ for each } n \}
$$
be the ring of infinite matrices with a finite number of non-zero entries in each row. Note in particular that the matrices in $\mathcal{A}$ are indexed starting from $0$. With each $A \in \mathcal{A}$ associate a family of statistics $\{ \langle \boldsymbol{\iota}, A_k \rangle \}_{k \geq 0}$  where $\boldsymbol{\iota} = (\iota_0,\iota_1, \dots)$, $A_k$ is the $k^{\text{th}}$ column of $A$, and 
$$
\langle \boldsymbol{\iota}, A_k \rangle  = \sum_{i=0}^{\infty} A_{ik} \iota_i.
$$
Let $\qq = (q_0,q_1, \dots)$, where $q_0, q_1, \dots$ are indeterminates. For each $A \in \mathcal{A}$ define
\begin{align*}
F_A(\qq) &= \sum_{\sigma \in \Sym(132)} \prod_{k \geq 0} q_k^{\langle \boldsymbol{\iota}, A_k \rangle}(\sigma), \\
C_A(\qq) &= \cfrac{1}{1-\cfrac{\prod q_k^{A_{0k}}}{1-\cfrac{\prod q_k^{A_{1k}}}{1-\cfrac{\prod q_k^{A_{2k}}}{1-\cfrac{\prod q_k^{A_{3k}}}{1-\cfrac{\prod q_k^{A_{4k}}}{\ddots}}}}}} 
\end{align*}
\begin{theorem}[Br\"and\'en-Claesson-\Stein \cite{BCS}] \label{bcsthm}
Let $A \in \mathcal{A}$ and $B = \left ( \binom{i}{j} \right )_{i,j \geq 0}$. Then
$$
F_A(\qq) = C_{BA}(\qq),
$$
and conversely 
$$
C_A(\qq) = F_{B^{-1}A}(\qq).
$$
In particular, all continued fractions $C_A(\qq)$ are generating functions of statistics on $\Sym(132)$ expressed as (possibly infinite) linear combinations of $\iota_k$'s.
\end{theorem}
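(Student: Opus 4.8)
The plan is to exploit the standard block decomposition of $132$-avoiding permutations, track increasing subsequences through it, and thereby turn the sum $F_A$ into a self-referential functional equation whose solution is manifestly a continued fraction; the Pascal matrix $B$ will then emerge automatically from iterating that recursion.

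First I would recall that every nonempty $\sigma \in \Sym(132)$ factors uniquely as $\sigma = 231[\sigma_1, 1, \sigma_2]$ with $\sigma_1, \sigma_2 \in \Sym(132)$, the entry ``$1$'' being the maximum, placed between a block $\sigma_1$ of larger values and a block $\sigma_2$ of smaller values. The key combinatorial observation is that an increasing subsequence of $\sigma$ is exactly one of: an increasing subsequence of $\sigma_1$; such a subsequence (possibly empty) with the maximal entry appended; or an increasing subsequence of $\sigma_2$ — nothing else can occur, since the $\sigma_1$-block lies to the left of and strictly above the $\sigma_2$-block while the maximum dominates everything to its right. Writing $\iota_i$ for the statistic counting increasing subsequences of length $i+1$ and adopting $\iota_{-1}\equiv 1$, this gives, for all $i \ge 0$,
\[
\iota_i(\sigma) = \iota_i(\sigma_1) + \iota_{i-1}(\sigma_1) + \iota_i(\sigma_2).
\]

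Next I would repackage $F_A$ multiplicatively: since $\prod_k q_k^{\langle\boldsymbol{\iota},A_k\rangle(\sigma)} = \prod_{i\ge 0}\bigl(\prod_k q_k^{A_{ik}}\bigr)^{\iota_i(\sigma)}$, set $Q_i = \prod_k q_k^{A_{ik}}$ (the monomial of the $i$th row of $A$) and $G(\mathbf Q) = \sum_{\sigma\in\Sym(132)}\prod_{i\ge 0}Q_i^{\iota_i(\sigma)}$, so that $F_A(\qq) = G(\mathbf Q)$. Substituting the displayed recursion into this sum and splitting off the factor produced by $\iota_{-1}(\sigma_1) = 1$ (which, after the reindexing $i\mapsto i+1$ in the middle term, contributes the lone monomial $Q_0$) yields
\[
G(\mathbf Q) = 1 + Q_0\, G(S\mathbf Q)\, G(\mathbf Q), \qquad (S\mathbf Q)_i := Q_i Q_{i+1}.
\]
This is not an algebraic equation for $G$ in one variable but a recursion with a parameter shift; solving for $G(\mathbf Q)$ gives $G(\mathbf Q) = \bigl(1 - Q_0 G(S\mathbf Q)\bigr)^{-1}$, and iterating — replacing $G(S^{n}\mathbf Q)$ in turn — produces a genuine continued fraction whose numerator at depth $n$ is $(S^{n}\mathbf Q)_0$, for $n = 0,1,2,\dots$. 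A one-line induction via Pascal's rule shows $(S^n\mathbf Q)_i = \prod_k Q_k^{\binom{n}{k-i}}$, hence $(S^n\mathbf Q)_0 = \prod_k Q_k^{\binom nk}$; substituting back $Q_k = \prod_\ell q_\ell^{A_{k\ell}}$ turns this into $\prod_\ell q_\ell^{\sum_k\binom nk A_{k\ell}} = \prod_\ell q_\ell^{(BA)_{n\ell}}$, which is precisely the depth-$n$ numerator of $C_{BA}(\qq)$. Thus $F_A(\qq) = C_{BA}(\qq)$. For the converse I would note that $B$ is lower-triangular with unit diagonal, hence a unit of the ring $\A$ with inverse $B^{-1} = \bigl((-1)^{i-j}\binom ij\bigr)_{i,j}$ and with $B^{-1}A \in \A$ (its rows are finite $\ZZ$-combinations of rows of $A$, hence finitely supported); applying the first identity with $A$ replaced by $B^{-1}A$ and using $B(B^{-1}A) = A$ gives $C_A(\qq) = F_{B^{-1}A}(\qq)$.

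The hard part will be the bookkeeping in deriving the functional equation: one must treat the empty permutation and the possibly-empty blocks $\sigma_1,\sigma_2$ uniformly, and thread the convention $\iota_{-1}\equiv 1$ through consistently so that the shift $S$ and the extra factor $Q_0$ land in exactly the right places. A secondary point to make explicit is the sense in which the identities hold: grading by permutation size (note $\iota_0(\sigma) = |\sigma|$), a permutation of size $n$ contributes a weight involving only $Q_0,\dots,Q_{n-1}$, matching the depth-$n$ truncation of the continued fraction, so that both sides are well-defined and agree as formal power series.
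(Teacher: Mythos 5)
Your proposal is correct, but note that the paper itself contains no proof to compare against: Theorem \ref{bcsthm} is quoted as an external result of Br\"and\'en--Claesson--\Stein~\cite{BCS}. Your route --- the unique factorization $\sigma = 231[\sigma_1,1,\sigma_2]$ of a nonempty $\sigma\in\Sym(132)$, the resulting recursion $\iota_i(\sigma)=\iota_i(\sigma_1)+\iota_{i-1}(\sigma_1)+\iota_i(\sigma_2)$ with the convention $\iota_{-1}\equiv 1$, the functional equation $G(\mathbf{Q})=1+Q_0\,G(S\mathbf{Q})\,G(\mathbf{Q})$, and the identity $(S^n\mathbf{Q})_0=\prod_k Q_k^{\binom{n}{k}}$ from Pascal's rule --- checks out line by line, and the converse direction is also sound, since $B$ is lower unitriangular and row-finite, so $B^{-1}=\bigl((-1)^{i-j}\tbinom{i}{j}\bigr)_{i,j\geq 0}$ exists in $\mathcal{A}$ and $B(B^{-1}A)=A$ by finiteness of all sums involved. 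This is genuinely more self-contained than the original argument, which composes the bijection $\Sym(132)\to\Dyck$ with Krattenthaler's formula $\iota_k(\sigma)=\sum_{i\in\Dstep(\Delta(\sigma))}\binom{\height_{\Delta(\sigma)}(i)-1}{k}$ (quoted later in this paper) and Flajolet's continued-fraction theorem for height-weighted Dyck paths; there the Pascal matrix enters through those binomial coefficients, whereas in your proof it emerges from iterating the shift $S$, so you effectively reprove the needed special case of Flajolet's theorem and the statistic translation in a single induction. The one point to tighten is your closing convergence remark: the depth-$N$ truncation of the continued fraction is not the contribution of permutations of size at most $N$; by your own recursion (with the tail replaced by $1$) it is the sum over all $\sigma\in\Sym(132)$ whose longest increasing subsequence has length at most $N$, which includes permutations of every size. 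So equality of the two sides should be phrased as coefficientwise stabilization of truncations, which requires a suitable grading (as in the paper's applications, where one variable records $\iota_0(\sigma)=|\sigma|$) and is implicitly assumed in the statement itself --- for $A=0$ neither side converges --- so this is a caveat inherited from the cited theorem rather than a defect of your argument.
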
 \noindent
Define the permutation statistic
$$
\inc = \iota_1 + \sum_{k=2}^{\infty} (-1)^{k-1} 2^{k-2} \iota_k.
$$
Note that $\inc$ is not a Mahonian statistic.
\begin{proposition} \label{inc}
We have
$$
\sum_{\sigma \in \Sym(132)} q^{ \inc(\sigma)} z^{|\sigma|}  = \cfrac{1}{1-\cfrac{z}{1-\cfrac{qz}{1-\cfrac{qz}{1-\cfrac{q^2z}{1-\cfrac{q^2z}{\ddots}}}}}}
$$
\end{proposition}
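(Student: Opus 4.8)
The plan is to apply the Brändén–Claesson–Steingrímsson machinery (Theorem \ref{bcsthm}) directly. The target continued fraction $C_A(\qq)$ corresponds to the column sequence of exponents $(1,1,2,2,3,3,\dots)$ in the variable $q$ (together with $z$ tracking length). Concretely, identifying the single variable $q$ and writing $z$ for the length variable, the continued fraction in the statement is $C_A(\qq)$ where the $n$-th numerator is $q^{\lceil n/2 \rceil} z$ for $n \ge 1$ and $z$ for $n = 0$; equivalently the relevant matrix $A$ is the one whose $n$-th row records the exponent $\lceil n/2\rceil$ of $q$ in position corresponding to $\iota$ (and every row contributes a single $z$, which is handled by the grading by $|\sigma|$). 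By Theorem \ref{bcsthm}, $C_A(\qq) = F_{B^{-1}A}(\qq)$, so the statistic equidistributed with this continued fraction over $\Sym(132)$ is $\langle \boldsymbol{\iota}, (B^{-1}A) \rangle$, and the whole proposition reduces to computing $B^{-1}A$ and checking that the resulting linear combination of $\iota_k$'s is exactly $\inc = \iota_1 + \sum_{k\ge 2} (-1)^{k-1} 2^{k-2}\iota_k$.

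**First I would** set up the generating-function bookkeeping: since we only use a single statistic $q$ and the length variable $z$, collapse the vector $\qq$ to the scalar substitution $q_k \mapsto q$ for the relevant column and absorb $z$ as the grading by $|\sigma|$, so that $F_A(\qq)$ becomes $\sum_{\sigma\in\Sym(132)} q^{\inc(\sigma)} z^{|\sigma|}$ once we know $B^{-1}A$ produces $\inc$. **Then** the core computation: the matrix $A$ (after accounting for the length variable) has $n$-th "exponent" equal to $a_n = \lceil n/2 \rceil$, i.e. the sequence $0,1,1,2,2,3,3,\dots$ indexed from $n=0$. We must show that the sequence $c = (c_0, c_1, c_2, \dots)$ with $c_k$ the coefficient of $\iota_k$ in $\inc$ — that is $c_0 = 0$, $c_1 = 1$, and $c_k = (-1)^{k-1} 2^{k-2}$ for $k \ge 2$ — satisfies $Bc = a$, where $B = (\binom{i}{j})_{i,j\ge 0}$ is the Pascal matrix. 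In other words, the identity to verify is the binomial transform relation
$$
\sum_{j=0}^{i} \binom{i}{j} c_j = \left\lceil \frac{i}{2} \right\rceil \quad \text{for all } i \ge 0.
$$
Checking $Bc = a$ rather than inverting $B$ is cleaner, and it is the one genuine calculation in the proof.

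**The main obstacle** is establishing this binomial-transform identity. I would split it: $\sum_{j=0}^i \binom{i}{j} c_j = \binom{i}{1} + \sum_{j=2}^{i} \binom{i}{j}(-1)^{j-1}2^{j-2} = i + \frac14\sum_{j=2}^{i}\binom{i}{j}(-2)^{j}\cdot(-1) \cdot(-1)$; more carefully, $\sum_{j=2}^i \binom{i}{j}(-1)^{j-1}2^{j-2} = -\frac14\sum_{j=2}^i \binom{i}{j}(-2)^j = -\frac14\left[(1-2)^i - 1 + 2i\right] = -\frac14\left[(-1)^i - 1 + 2i\right]$. Adding the $j=1$ term $i$ gives $i - \frac14\left[(-1)^i - 1 + 2i\right] = \frac{i}{2} + \frac{1 - (-1)^i}{4}$, which equals $i/2$ when $i$ is even and $(i+1)/2$ when $i$ is odd — exactly $\lceil i/2 \rceil$. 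This confirms $Bc = a$, hence $c = B^{-1}a$, hence $\inc = \langle\boldsymbol{\iota}, B^{-1}A\rangle$ and Theorem \ref{bcsthm} gives $F_A = C_{A}$ evaluated appropriately, i.e. the claimed continued fraction. I would also remark that the finiteness condition defining $\mathcal{A}$ is not an issue because we apply the "converse" direction of Theorem \ref{bcsthm}, which explicitly allows infinite linear combinations of $\iota_k$'s on the $F$-side; the matrix $A$ itself has one nonzero entry per row, so $A \in \mathcal{A}$ and $C_A$ is well-defined.
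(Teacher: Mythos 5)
Your proposal is correct and follows essentially the same route as the paper: both apply Theorem \ref{bcsthm} to the continued fraction (\ref{cfrak1}), with the $z$-column of $A$ accounting for the length grading via $\iota_0=|\sigma|$. The only difference is cosmetic — you verify the forward binomial transform $Bc=a$ with $a_i=\lceil i/2\rceil$, while the paper computes $B^{-1}A$ directly; since $B$ is unitriangular these are equivalent, and your evaluation of $\sum_j\binom{i}{j}c_j$ is carried out correctly.
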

\begin{proof}
Comparing (\ref{cfrak1}) with the definition of $C_A(\qq)$ we get
$$
A = \begin{pmatrix} 
0 & 1 & 0 & 0 & \hdots\\
1 & 1 & 0 & 0 & \hdots \\
1 & 1 & 0 & 0 & \hdots \\
2 & 1 & 0 & 0 & \hdots \\
2 & 1 & 0 & 0 & \hdots \\
\vdots & \vdots & \vdots & \vdots & \ddots \\
\end{pmatrix}
$$
Note that $B^{-1} = \left ( (-1)^{i-j} \binom{i}{j} \right )_{i,j \geq 0}$. In $B^{-1}A$ we see that columns $2,3, \dots$ are zero columns and that column $1$ is equal to $(1,0,0,\dots)^T$ since $\sum_{k \geq 0} (-1)^{n-k} \binom{n}{k} = \delta_{n0}$ where $\delta_{ij}$ denotes the Kronecker delta. The entries $(B^{-1}A)_{k0}$ in column $0$ are given by
\begin{align*}
(B^{-1}A)_{n0} = \sum_{i \geq 0} \lfloor (i+1)/2 \rfloor (-1)^{k-i} \binom{k}{i} = \begin{cases} 0, & \text{ if } k = 0 \\ 1, & \text{ if } k = 1 \\ (-1)^{k-1}2^{k-2}, & \text{ if } k > 1. \end{cases}
\end{align*} \noindent
Hence the proposition follows from Theorem \ref{bcsthm}.
\end{proof}
\begin{remark}
Applying the same argument to the continued fraction (\ref{cfrak2}) it is easy to see that Theorem \ref{bcsthm} gives equidistribution with
$$
\sum_{\sigma \in \Sym(132)} q^{\iota_1(\sigma)} z^{|\sigma|} = \sum_{\sigma \in \Sym(312)} q^{\inv(\sigma)} z^{|\sigma|}. 
$$
\end{remark}
\begin{corollary}
For any $n \geq 1$,
$$
\sum_{\sigma \in \Sym_n(132)} q^{\inc(\sigma)} = \sum_{\sigma \in \Sym_n(321)} q^{\inv(\sigma)}
$$
\end{corollary}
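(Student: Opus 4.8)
The plan is to observe that both sides of the claimed identity arise, after summing over all $n$ with a variable $z$ recording the length, as the coefficient-of-$z^n$ extraction from one and the same continued fraction, namely the right-hand side of (\ref{cfrak1}). First I would invoke Proposition \ref{inc}, which states precisely that
$$
\sum_{\sigma \in \Sym(132)} q^{\inc(\sigma)} z^{|\sigma|} = \cfrac{1}{1-\cfrac{z}{1-\cfrac{qz}{1-\cfrac{qz}{1-\cfrac{q^2z}{1-\cfrac{q^2z}{\ddots}}}}}},
$$
so the left-hand side of the corollary is the coefficient of $z^n$ in this continued fraction.

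Next I would produce the matching statement for $\inv$ over $\Sym(321)$. By Corollary \ref{inv321mad231} we have $\sum_{\sigma \in \Sym_n(231)} q^{\mad(\sigma)} = \sum_{\sigma \in \Sym_n(321)} q^{\inv(\sigma)}$ for every $n \geq 0$ (the case $n=0$ being trivial); multiplying by $z^n$ and summing over $n \geq 0$ gives
$$
\sum_{\sigma \in \Sym(321)} q^{\inv(\sigma)} z^{|\sigma|} = \sum_{\sigma \in \Sym(231)} q^{\mad(\sigma)} z^{|\sigma|},
$$
which by Theorem \ref{cfrak}, equation (\ref{cfrak1}), equals exactly the same continued fraction. (Alternatively, this matching of continued fraction expansions is precisely \cite[Corollary 8.6]{CEKS}, as recorded in the remark following Theorem \ref{cfrak}, so one may cite that directly in place of Corollary \ref{inv321mad231} and Theorem \ref{cfrak}.)

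Finally, I would equate the two generating functions and extract the coefficient of $z^n$ to conclude that $\sum_{\sigma \in \Sym_n(132)} q^{\inc(\sigma)} = \sum_{\sigma \in \Sym_n(321)} q^{\inv(\sigma)}$. There is essentially no genuine obstacle in this proof; it is purely an assembly of Proposition \ref{inc}, Theorem \ref{cfrak}, and Corollary \ref{inv321mad231} (or \cite[Corollary 8.6]{CEKS}). The only point that deserves a moment's care is bookkeeping: all three generating functions in play are taken over the full set $\Sym$ with $z$ recording length, so the asserted identity at a fixed $n$ is obtained by comparing the coefficients of a fixed power of $z$, which is legitimate since each of these power series lies in $\ZZ[q][[z]]$.
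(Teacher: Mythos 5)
Your proposal is correct and follows exactly the paper's argument, which likewise combines Corollary \ref{inv321mad231}, Theorem \ref{cfrak} and Proposition \ref{inc}; you have merely spelled out the coefficient-of-$z^n$ comparison that the paper leaves implicit. The alternative citation of \cite[Corollary 8.6]{CEKS} is also consistent with the remark following Theorem \ref{cfrak}.
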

\begin{proof}
Follows by combining Corollary \ref{inv321mad231}, Theorem \ref{cfrak} and Proposition \ref{inc}. 
\end{proof}
\begin{proposition}
Let $\Delta:\Sym(132) \to \Dyck$ denote the standard bijection defined by $\Delta(\sigma) = U\Delta(\sigma_1)D\Delta(\sigma_2)$ where $\sigma = 231[\sigma_1,1,\sigma_2] \in \Sym(132)$. Then 
$$
\inc(\sigma) = \sdowns(\Delta(\sigma))
$$ 
for all $\sigma \in \Sym(132)$.
\end{proposition}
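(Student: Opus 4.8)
\emph{Plan.} The plan is to induct on $n=|\sigma|$ using the block decomposition $\sigma=231[\sigma_1,1,\sigma_2]$, for which $\Delta(\sigma)=U\Delta(\sigma_1)D\Delta(\sigma_2)$ by definition; concretely $\sigma$ is a copy of $\sigma_1$ placed on the value set $\{|\sigma_2|+1,\dots,n-1\}$ in the first $|\sigma_1|$ positions, followed by the letter $n$, followed by a copy of $\sigma_2$ on $\{1,\dots,|\sigma_2|\}$. The combinatorial starting point is a recursion for the statistics $\iota_k$: no increasing subsequence of $\sigma$ can contain both a letter of the $\sigma_1$-block and a letter of the $\sigma_2$-block (the former is earlier in position but larger in value), and an increasing subsequence using $n$ must terminate with $n$ and have its remaining letters inside the $\sigma_1$-block; hence
\[
\iota_k(\sigma)=\iota_k(\sigma_1)+\iota_k(\sigma_2)+\iota_{k-1}(\sigma_1),\qquad k\ge 0,
\]
with $\iota_{-1}\equiv 1$ accounting for the subsequence $(n)$ itself.

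Feeding this into $\inc=\iota_1+\sum_{k\ge2}(-1)^{k-1}2^{k-2}\iota_k$ and reindexing the sum (splitting off the $k=1$ term, whose coefficient $1$ is not the value given by the general formula), I would obtain the recursion
\[
\inc(\sigma)=\inc(\sigma_1)+\inc(\sigma_2)+T(\sigma_1),\qquad T(\tau):=|\tau|+\sum_{j\ge1}(-1)^j2^{j-1}\iota_j(\tau),
\]
and, applying the $\iota_k$-recursion a second time to $T$, the auxiliary recursion $T(\sigma)=|\sigma_1|+1-T(\sigma_1)+T(\sigma_2)$, with $T(\emptyset)=0$. On the Dyck path side, here the height of a down-step means the ordinate of its upper endpoint. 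Inside $\Delta(\sigma)=U\Delta(\sigma_1)D\Delta(\sigma_2)$ the subpath $\Delta(\sigma_1)$ sits one unit higher than it does on its own, the connecting down-step has height $1$ and thus contributes $0$ to $\sdowns$, and $\Delta(\sigma_2)$ retains its own heights. Raising $\Delta(\sigma_1)$ by one replaces each down-step contribution $\lfloor h/2\rfloor$ by $\lceil h/2\rceil$, so, writing $o(P)$ for the number of down-steps of $P$ of odd height,
\[
\sdowns(\Delta(\sigma))=\sdowns(\Delta(\sigma_1))+\sdowns(\Delta(\sigma_2))+o(\Delta(\sigma_1)),
\]
and the same shift analysis yields $o(\Delta(\sigma))=|\sigma_1|+1-o(\Delta(\sigma_1))+o(\Delta(\sigma_2))$, with $o(\emptyset)=0$.

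The proof then closes in two stages. Since $T$ and $o\circ\Delta$ obey the same recursion and the same base value, induction gives $T(\tau)=o(\Delta(\tau))$ for every $\tau\in\Sym(132)$; substituting this into the recursions displayed above shows that $\inc$ and $\sdowns\circ\Delta$ both satisfy $X(\sigma)=X(\sigma_1)+X(\sigma_2)+o(\Delta(\sigma_1))$ with $X(\emptyset)=0$, and a final induction yields $\inc(\sigma)=\sdowns(\Delta(\sigma))$. Equivalently one can prove the two identities $\inc=\sdowns\circ\Delta$ and $T=o\circ\Delta$ simultaneously by a single induction; as a consistency check, summing either recursion reproduces the continued fraction appearing in Proposition~\ref{inc} through the framework of Theorem~\ref{bcsthm}.

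The step I expect to be the main obstacle is the bookkeeping for $\sdowns$ under the unit upward shift of $\Delta(\sigma_1)$: because $\lfloor h/2\rfloor$ is parity-sensitive, this shift does not add a constant but leaves behind the correction term $o(\Delta(\sigma_1))$, and it is precisely this term that forces the introduction and separate inductive control of the auxiliary statistic $T$ (equivalently $o\circ\Delta$). Secondary care is needed in the algebraic rearrangement of the alternating sums defining $\inc$ and $T$ --- in particular the anomalous $k=1$ coefficient must be handled by hand --- and the base cases $\sigma\in\{\emptyset,1\}$ should be checked directly to pin down the conventions $\iota_{-1}\equiv1$ and $\Delta(1)=UD$.
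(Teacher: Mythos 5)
Your proof is correct, but it takes a genuinely different route from the paper. The paper's proof is essentially a two-line computation: it quotes Krattenthaler's non-recursive formula $\iota_k(\sigma)=\sum_{i\in\Dstep(\Delta(\sigma))}\binom{\height_{\Delta(\sigma)}(i)-1}{k}$ from \cite{Kra} (see also \cite{BCS}) and then evaluates the alternating binomial sum $\binom{h-1}{1}+\sum_{k\ge2}(-1)^{k-1}2^{k-2}\binom{h-1}{k}=\lfloor h/2\rfloor$ termwise over the down-steps. You instead stay entirely inside the recursive definition of $\Delta$: your recursion $\iota_k(\sigma)=\iota_k(\sigma_1)+\iota_k(\sigma_2)+\iota_{k-1}(\sigma_1)$ is correct for the decomposition $\sigma=231[\sigma_1,1,\sigma_2]$, the resulting recursions for $\inc$ and for the auxiliary statistic $T$ check out algebraically, and on the path side the shift-by-one analysis (with the correction term counting odd-height down-steps) matches them exactly, so the joint induction closes. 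What the paper's route buys is brevity, at the price of importing the height formula for $\iota_k$ as a black box; what your route buys is a self-contained proof that needs only the block decomposition, at the price of the auxiliary statistic $T$ (equivalently the odd-height down-step count) forced by the parity-sensitivity of $\lfloor h/2\rfloor$ --- exactly the obstacle you identified. One further point in your favour: your explicit convention that the height of a down-step is the ordinate of its upper endpoint is the one under which the statement (and Krattenthaler's formula as used in the paper) is actually true; with the paper's literal ``lowest point'' definition of step height the identity would already fail for $\sigma=12$, so your declared convention quietly repairs a slip in the paper's bookkeeping rather than introducing one.
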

\begin{proof}
In \cite{Kra} (see also \cite{BCS}) Krattenthaler shows that 
$$
\iota_k(\sigma) = \sum_{i \in \Dstep(\Delta(\sigma))} \binom{\height_{\Delta(\sigma)}(i) - 1}{k},
$$
for all $\sigma \in \Sym(132)$.
Hence
\begin{align*}
\displaystyle \inc(\sigma) &= \sum_{i \in \Dstep(\Delta(\sigma))}  \left ( \binom{\height_{\Delta(\sigma)}(i) - 1}{1} + \sum_{k=2}^{\infty} (-1)^{k-1}2^{k-2} \binom{\height_{\Delta(\sigma)}(i) - 1}{k} \right ) \\ &= \sum_{i \in \Dstep(\Delta(\sigma))}  \lfloor \height_{\Delta(\sigma)}(i)/2 \rfloor \\ &= \sdowns(\Delta(\sigma)),
\end{align*} \noindent
for all $\sigma \in \Sym(132)$.
\end{proof}

Since the Mahonian statistics in Table \ref{mafunc} are linear combinations of vincular patterns of length at most three, it is natural to consider the following more general statistic.
\begin{definition}
Let $\mathcal{P} = \{ a\underline{bc}: abc \in \Sym_3 \} \cup \{ \underline{ab}c : abc \in \Sym_3 \} \cup \{ \underline{21} \}$ and $\boldsymbol{\alpha} = (\alpha_{\rho}) \in \NN^{\mathcal{P}}$. Define the statistic $\stat_{\boldsymbol{\alpha}}: \Sym \to \NN$ by
$$
\stat_{\boldsymbol{\alpha}}(\sigma) =\sum_{\rho \in \mathcal{P}} \alpha_{\rho}(\rho) \sigma,
$$
for all $\sigma \in \Sym$.
\end{definition} \noindent
Let \textit{head} and \textit{last} be the statistics defined by $\head(\sigma) = \sigma(1)$ and $\last(\sigma) = \sigma(n)$ for all $\sigma \in \Sym_n$.
We associate to $\stat_{\boldsymbol{\alpha}}$ the following generating function for each set $\Pi$ of patterns
$$
F_n(\Pi, \boldsymbol{\alpha}; q,t,u,v) = \sum_{\sigma \in \Sym_n(\Pi)} q^{\stat_{\boldsymbol{\alpha}}(\sigma)} t^{\des(\sigma)} u^{\head(\sigma)} v^{\last(\sigma)}.
$$ 
\begin{theorem} \label{genfunc}
We have
\begin{align*}
&F_n(312, \boldsymbol{\alpha}; q,t,u,v) \\ &= q^{C(0)}uvF_{n-1} \left (312, \boldsymbol{\alpha};q,q^{A_2(0)}t,q^{B_2},v \right ) + q^{C(n-1)}tuv F_{n-1}\left (312, \boldsymbol{\alpha};q,q^{A_1(n-1)}t,u, q^{B_1} \right ) \\ & \hspace{8pt} + \sum_{k=1}^{n-2} q^{C(k)}tuv^{k} F_k \left (312,\boldsymbol{\alpha};q, q^{A_1(k)}t,u,q^{B_1} \right ) F_{n-k-1} \left (312,\boldsymbol{\alpha};q, q^{A_2(k)}t,q^{B_2},v \right ),
\end{align*} \noindent
where
\begin{align*}
A_1(k) &= \alpha_{\underline{32}1} -\alpha_{\underline{23}1}  + (n-k-1) \left (\alpha_{\underline{21}3} - \alpha_{\underline{12}3} \right ), \\
A_2(k) &= (k+1) \left (\alpha_{1\underline{32}} - \alpha_{1\underline{23}} \right ), \\ B_1 &= \alpha_{2 \underline{31}} - \alpha_{3\underline{21}}, \\ B_2 &=\alpha_{\underline{13}2} - \alpha_{\underline{12}3} , \\ C(k) &= (k\alpha_{\underline{12}3} - \alpha_{\underline{21}3})(n-k-1) - \delta_{k < n-1}\alpha_{\underline{13}2} + \delta_{k > 0}(n-k-1)\alpha_{\underline{21}3} \\ &\hspace{8pt}+ \delta_{k > 0}(k-1)\alpha_{\underline{23}1} + \delta_{k < n-1}(k+1)(n-k-2)\alpha_{1 \underline{23}} \\ &\hspace{8pt}+ \delta_{k < n-1}k\alpha_{2\underline{13}} - \delta_{k > 0}\alpha_{2\underline{31}} + k\alpha_{3 \underline{21}} + \delta_{k > 0}\alpha_{\underline{21}},
\end{align*} \noindent
and $\delta$ denotes the Kronecker delta.
\end{theorem}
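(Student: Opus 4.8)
The plan is to obtain the recursion from the block decomposition of $\Sym_n(312)$ in which the value $1$ is singled out. If $\sigma\in\Sym_n(312)$ and $\sigma(k+1)=1$, then avoidance of $312$ with the $1$ playing the role of the middle letter forces $\{\sigma(1),\dots,\sigma(k)\}=\{2,\dots,k+1\}$ and $\{\sigma(k+2),\dots,\sigma(n)\}=\{k+2,\dots,n\}$; hence $\sigma=213[\sigma_1,1,\sigma_2]$ with $\sigma_1\in\Sym_k(312)$ and $\sigma_2\in\Sym_{n-k-1}(312)$, and this gives a bijection $\Sym_n(312)\leftrightarrow\bigsqcup_{k=0}^{n-1}\Sym_k(312)\times\Sym_{n-k-1}(312)$. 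First I would record the easy statistics: $\head(\sigma)=\head(\sigma_1)+1$ when $\sigma_1\neq\emptyset$ and $\head(\sigma)=1$ otherwise; $\last(\sigma)=\last(\sigma_2)+k+1$ when $\sigma_2\neq\emptyset$ and $\last(\sigma)=1$ otherwise; and $\des(\sigma)=\des(\sigma_1)+\des(\sigma_2)+\delta_{k>0}$, the extra descent being the step into the value $1$.

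The core of the argument is to expand $\stat_{\boldsymbol\alpha}(\sigma)=\sum_{\rho\in\mathcal{P}}\alpha_\rho(\rho)\sigma$ by sorting, for each $\rho\in\mathcal{P}$, the occurrences of $\rho$ according to how their (at most three) positions split among the blocks $\sigma_1$ (positions $1,\dots,k$), the singleton $\{k+1\}$, and $\sigma_2$ (positions $k+2,\dots,n$). The occurrences contained in a single block reassemble into $\stat_{\boldsymbol\alpha}(\sigma_1)+\stat_{\boldsymbol\alpha}(\sigma_2)$. For the crossing occurrences I would use two facts. First, comparisons between elements of different blocks are forced (the value $1$ is smallest, the $\sigma_1$-values come next, the $\sigma_2$-values are largest), so the only internal datum in a crossing occurrence is whether an adjacent pair of same-block positions is an ascent or a descent. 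Second, a vincular adjacency straddling two blocks can only sit at a junction, positions $(k,k+1)$ or $(k+1,k+2)$, and so involves only $\last(\sigma_1)$, the value $1$, or $\head(\sigma_2)$, never $\head(\sigma_1)$ or $\last(\sigma_2)$. A finite case-check then shows that every crossing contribution is a constant in $k,n$, or a constant times an internal ascent or descent of $\sigma_1$ or of $\sigma_2$, or a constant times $\last(\sigma_1)$ or $\head(\sigma_2)$; the last kind comes from counting the elements of a block below a junction element, e.g.\ the triples $(a,\sigma_1(k),1)$ with $a$ before $\sigma_1(k)$ split into $2\underline{31}$ and $3\underline{21}$ occurrences according as $a<\sigma_1(k)$ or $a>\sigma_1(k)$, contributing $(\alpha_{2\underline{31}}-\alpha_{3\underline{21}})\last(\sigma_1)$ plus a constant.

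To assemble the stated formula I would then rewrite each ascent count as $|\sigma_i|-1-\des(\sigma_i)$, turning each per-ascent contribution into a $k,n$-constant minus a per-descent one. Collecting the per-descent coefficients yields exactly $A_1(k)$ for $\sigma_1$ and $A_2(k)$ for $\sigma_2$ --- the $(n-k-1)$-weights come from crossing triples whose free element lies in $\sigma_2$, the $(k+1)$-weight from those whose free element lies in $\sigma_1$ or equals the value $1$ --- while the coefficients of $\last(\sigma_1)$ and of $\head(\sigma_2)$ give $B_1$ and $B_2$; all the remaining $k,n$-constants, the $\pm1$ offsets from $\head$ and $\last$, and the term $\delta_{k>0}\alpha_{\underline{21}}$ from the extra descent are absorbed into $C(k)$. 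Translating the resulting generating-function identity into the displayed recursion is routine, but the summands $k=0$ ($\sigma_1=\emptyset$) and $k=n-1$ ($\sigma_2=\emptyset$) must be split off from the range $1\le k\le n-2$, because at the boundary the $\head$-slot (resp.\ $\last$-slot) of one factor is frozen to $1$; this is the origin of the $\delta$-symbols and of the particular arguments $q^{B_1},q^{B_2},u,v$ filling the four slots of $F_k$ and $F_{n-k-1}$.

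The step I expect to be the main obstacle is precisely this case analysis: the thirteen vincular patterns of $\mathcal{P}$, several block-splittings apiece, and a number of sign-sensitive off-by-one counts --- how many elements of $\sigma_i$ lie strictly below a junction element, and the conversion between ascents and descents --- that must reproduce the stated $A_i(k),B_i,C(k)$ on the nose. Conceptually the recursion is forced: a crossing occurrence uses at most three positions and hence touches the internal order of at most one of $\sigma_1,\sigma_2$, through a single adjacent pair, so it contributes only something affine in $\des(\sigma_1),\des(\sigma_2),\last(\sigma_1),\head(\sigma_2)$ with no products of sub-statistics, which is exactly what makes $F_n(312,\boldsymbol\alpha;q,t,u,v)$ close under the decomposition; the work lies entirely in nailing down the coefficients.
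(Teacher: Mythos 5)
Your proposal is correct and follows essentially the same route as the paper: decompose $\sigma=213[\sigma_1,1,\sigma_2]\in\Sym_n(312)$ around the value $1$, observe that each vincular pattern statistic satisfies $(\rho)\sigma=(\rho)\sigma_1+(\rho)\sigma_2+m_\rho$ with cross-terms $m_\rho$ affine in the descents/ascents of $\sigma_1,\sigma_2$ and in $\last(\sigma_1)$, $\head(\sigma_2)$ (your sample computation for the junction $(k,k+1)$ matches the paper's $m_{2\underline{31}}=(12]\sigma_1$ and $m_{3\underline{21}}=(21]\sigma_1$), and then convert to generating functions, splitting off the boundary cases $k=0$ and $k=n-1$. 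The paper simply records the full table of cross-terms $m_\rho$ explicitly, which is the case-check you correctly identify as the remaining bookkeeping.
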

\begin{proof}
Let $\sigma \in \Sym_n(312)$ and consider the inflation form $\sigma = 213[\sigma_1,1, \sigma_2]$ where $\sigma_1 \in \Sym_k(312)$ and $\sigma_2 \in \Sym_{n-k-1}(312)$. Then for each $\rho \in \mathcal{P}$ we get the recursive relations
\begin{align*}
(\rho)\sigma = (\rho)\sigma_1 + (\rho)\sigma_2 + m_{\rho},
\end{align*} \noindent
where
\begin{align*}
&m_{\underline{12}3} = [12)\sigma_2 + |\sigma_2|(\underline{12})\sigma_1,
 &m_{1\underline{23}} = (|\sigma_1|+1)(\underline{12})\sigma_2, \\ &m_{\underline{13}2} = [21)\sigma_2, &m_{1 \underline{32}} = (|\sigma_1|+1)(\underline{21})\sigma_2, \\ &m_{\underline{21} 3} = ((\underline{21}) + \delta_{\sigma_1 \neq \emptyset})|\sigma_2|, &m_{2\underline{13}} = |\sigma_1|\delta_{\sigma_2 \neq \emptyset}, \\ &m_{\underline{23}1} = (\underline{12}) \sigma_1, &m_{2\underline{31}} = (12]\sigma_1, \\ &m_{\underline{32}1} = (\underline{21}) \sigma_1, &m_{3\underline{21}} = (21]\sigma_1
\end{align*} \noindent
and $m_{\underline{21}} = \delta_{\sigma_1 \neq \emptyset}$. It follows that $\stat_{\boldsymbol{\alpha}}$ satisfies the following recursion
$$
\stat_{\boldsymbol{\alpha}}(\sigma) = \stat_{\boldsymbol{\alpha}}(\sigma_1) + \stat_{\boldsymbol{\alpha}}(\sigma_1) + \sum_{\rho \in \mathcal{P}} m_{\rho}.
$$
We note that $|\sigma_1| = k$, $|\sigma_2| = n-k-1$, $(\underline{21})\sigma = \des(\sigma)$, $(\underline{12})\sigma = \delta_{\sigma \neq \emptyset}(|\sigma| - 1) - \des(\sigma)$, $[21)\sigma = \head(\sigma) - \delta_{\sigma \neq \emptyset}$, $[12)\sigma = |\sigma| - \head(\sigma)$, $(12]\sigma = \last(\sigma)-\delta_{\sigma \neq \emptyset}$ and $(21]\sigma = |\sigma| - \last(\sigma)$ for all $\sigma \in \Sym_n(312)$. 
Converting these statements into generating functions proves the theorem.
\end{proof}
\begin{remark}
If $\alpha_{\underline{23}1} = \alpha_{\underline{31}2} = \alpha_{\underline{32}1} = \alpha_{\underline{21}} = 1$ and $\alpha_{\rho} = 0$ otherwise, then $\stat_{\boldsymbol{\alpha}} = \inv$ and $F(312, \boldsymbol{\alpha}; q,1,1,1) = I_n(q) = \tilde{C}_n(q)$. Similarly if we choose $\boldsymbol{\alpha}$ such that $\stat_{\boldsymbol{\alpha}} = \maj$, then we recover the recursion in \cite[Theorem 3.4]{DDJSS} via the recursion for $F(312, \boldsymbol{\alpha}; q,t,1,1)$ in Theorem \ref{genfunc}.  
\end{remark} \noindent
Recall the Simion-Schmidt bijection $\phi:\Sym_n(123) \to \Sym_n(132)$ which maps $\sigma \in \Sym_n(123)$ to the unique permutation in $\Sym_n(132)$ with the same left-to-right minima in the same positions as $\sigma$ (cf Lemma \ref{simionschmidt}). As explicitly noted by Claesson and Kitaev \cite{CK} this bijection clearly preserves the $\head$ statistic and hence $[123]_{\head} = [132]_{\head}$. Although $\head$ is not a Mahonian statistic we complete its $\st$-Wilf classification below for all subsets of $\Sym_3$ of size at most three. Equivalences for subsets of larger size can easily be found using similar analysis on the inflation forms. These are less interesting and omitted for brevity.  We note in particular that the single pattern distributions with respect to the $\head$ statistic are given by well-known refinements of the Catalan numbers. 
\begin{proposition}
We have
\begin{align*}
\displaystyle [123]_{\head} &= \{ 123, 132 \} = [132]_{\head}, \\ [321]_{\head} &= \{ 321, 312 \} = [312]_{\head}, \\
[231]_{\head} &= \{ 213,231 \} = [213]_{\head} \\ [123,213]_{\head} &= \{ \{123,213 \}, \{ 132, 213 \}, \{ 132,231 \} \} \\ [231,321]_{\head} &= \{ \{231,321 \}, \{ 213, 312 \}, \{ 231, 312 \} \} \\ [213,231,321]_{\head} &= \{ \{213,231,321 \}, \{ 213,231,312 \} \} \\ [132,213,231]_{\head} &= \{ \{132,213,231 \}, \{ 123,213,231 \} \} \\ [132,213,321]_{\head} &= \{ \{132,213,321 \}, \{ 132,213,312 \}, \{ 132,231,321 \}, \\ & \hspace{15pt} \{ 132,231,312 \}, \{ 123,213,312 \} \}.
\end{align*} \noindent
Remaining subsets $\Pi \subseteq \Sym_3$ of size at most three have singleton $\head$-Wilf class.
Moreover for any $n \geq 1$
\begin{align*}
\sum_{\sigma \in \Sym_n(123)} q^{\head(\sigma)} &= \sum_{k=1}^n C_{n-1,k-1}q^k, \\
\sum_{\sigma \in \Sym_n(213)} q^{\head(\sigma)} &= \sum_{k=1}^n C_{k-1}C_{n-k} q^k, \\
\sum_{\sigma \in \Sym_n(123,213)} q^{\head(\sigma)} &= q + \sum_{k=2}^n 2^{k-2} q^k.
\end{align*} \noindent
where $C_n = \frac{1}{n+1} \binom{2n}{n}$ and $C_{n,k} = \frac{n-k+1}{n+1} \binom{n+k}{n}$ $(\mathtt{A009766} \thinspace \text{\cite{Slo}})$.
\end{proposition}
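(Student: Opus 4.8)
\emph{Strategy.} The plan is to compute, for every $\Pi\subseteq\Sym_3$ with $|\Pi|\le 3$, the generating function $\sum_{\sigma\in\Sym_n(\Pi)}q^{\head(\sigma)}$ by means of the same inflation-form decompositions of $\Sym_n(\Pi)$ that drive the earlier sections, and then to read off the three displayed formulas and the $\head$-Wilf classification by inspection. Two elementary remarks are used throughout: complementation is a bijection $\Sym_n(\pi)\to\Sym_n(\pi^c)$ carrying $\head(\sigma)$ to $n+1-\head(\sigma)$ (so that it interchanges $123\leftrightarrow 321$, $132\leftrightarrow 312$ and $213\leftrightarrow 231$), and $\sigma(1)$ is always both a left-to-right minimum and a left-to-right maximum of $\sigma$, so any bijection preserving the left-to-right minima (or maxima) as sets of (position, value) pairs automatically preserves $\head$.

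\emph{The three explicit formulas.} If $\sigma\in\Sym_n(213)$ and $\head(\sigma)=k$, then every entry exceeding $k$ must precede every entry below $k$ (otherwise $\sigma(1)$ together with such a pair is a $213$), so $\sigma=k\,\tau_1\,\tau_2$ with $\tau_1\in\Sym_{n-k}(213)$ on $\{k+1,\dots,n\}$ and $\tau_2\in\Sym_{k-1}(213)$ on $\{1,\dots,k-1\}$; since $|\Sym_m(213)|=C_m$ this gives the coefficient $C_{k-1}C_{n-k}$. For $123$, the Simion--Schmidt map of Lemma~\ref{simionschmidt} is a $\head$-preserving bijection $\Sym_n(123)\to\Sym_n(132)$, so it suffices to treat $\sigma=231[\sigma_1,1,\sigma_2]\in\Sym_n(132)$; writing $G_n(q)$ for the resulting generating function one obtains
$$
G_n(q)=q^nC_{n-1}+\sum_{k=1}^{n-1}q^{n-k-1}C_{n-k-1}\,G_k(q),
$$
and one checks that $\sum_{k}C_{n-1,k-1}q^k$ solves this recursion (equivalently, that $\#\{\sigma\in\Sym_n(132):\sigma(1)=k\}=C_{n-1,k-1}$, which can also be seen through the standard bijection $\Delta:\Sym(132)\to\Dyck$, under which $\head(\sigma)=|\sigma|-h+1$ with $h$ the length of the initial run of up-steps of $\Delta(\sigma)$). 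Finally, if $\sigma\in\Sym_n(123,213)$ with $n\ge 2$, then either $\sigma(1)=n$, in which case deleting $\sigma(1)$ gives an arbitrary element of $\Sym_{n-1}(123,213)$, or $\sigma(1)<n$, in which case $n$ is forced into position $2$ (any two entries preceding $n$ would form a $123$ or a $213$ with it, and position $3$ is excluded because $\sigma(1)\,\sigma(2)\,n$ would itself be such a pattern) and deleting that $n$ is a $\head$-preserving bijection onto $\Sym_{n-1}(123,213)$; hence $H_n(q)=2^{n-2}q^n+H_{n-1}(q)$, which with $H_1(q)=q$ yields the stated polynomial.

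\emph{The classification.} For single patterns the three distributions above, together with the complement symmetry, give: $[123]_{\head}$ has distribution $\sum_kC_{n-1,k-1}q^k$ and $[321]_{\head}$ its reflection $k\mapsto n+1-k$, while $[213]_{\head}$ has the palindromic distribution $\sum_kC_{k-1}C_{n-k}q^k$; for $312$ versus $321$ one uses in addition that, by Lemma~\ref{321decomp} respectively by complementing the statement of Lemma~\ref{simionschmidt}, both $\Sym_n(321)$ and $\Sym_n(312)$ are determined by the (position, value) data of their left-to-right maxima, this data ranging over the same family of admissible configurations, so that re-filling the non-maximal positions by the two respective rules gives a $\head$-preserving bijection $\Sym_n(321)\to\Sym_n(312)$. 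Distinctness of these three distributions, and all the claimed non-equivalences, are already visible at $n=3$ (the distributions there are $q+2q^2+2q^3$, $2q+2q^2+q^3$ and $2q+q^2+2q^3$). The listed $2$- and $3$-element classes are then obtained by the identical method: for each relevant subset one records the inflation-form structure of the avoidance class, derives the resulting recursion for $\sum_\sigma q^{\head(\sigma)}$, and collates the outcomes. I expect the main labor to be exactly this bookkeeping over all $\binom 6 1+\binom 6 2+\binom 6 3$ subsets — checking that no two distributions coincide except as listed — with the $312$-versus-$321$ identity being the only point that requires an idea rather than a routine computation.
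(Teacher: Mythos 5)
The parts you actually carry out are sound: the first-letter decomposition of $\Sym_n(213)$, the recursion/initial-ascent count for $\Sym_n(132)$ (the paper obtains $C_{n-1,k-1}$ the same way, by counting Dyck paths with a prescribed initial run of $U$-steps under $\Delta$), and the position-of-$n$ analysis of $\Sym_n(123,213)$ are all correct; they differ from the paper only in that the paper computes the second and third distributions over $\Sym_n(231)$ and $\Sym_n(132,231)$ and transfers them through explicit $\head$-preserving bijections. However, there are two genuine gaps. For $[321]_{\head}=[312]_{\head}$ you assert that the admissible (position, value) configurations of left-to-right maxima ``range over the same family'' for $\Sym_n(321)$ and $\Sym_n(312)$. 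Lemma \ref{321decomp} and (the complement of) Lemma \ref{simionschmidt} only give uniqueness of the refilling; they do not show that every configuration realized in one class is realized in the other, and that existence statement is exactly the (complemented) well-definedness of the Simion--Schmidt bijection, i.e.\ the paper's map $\psi(\sigma)=\phi(\sigma^c)^c$, so as written this step is circular-or-missing rather than proved. (In fact your own opening remark makes it unnecessary: complementation turns the $\head$-distributions over $\Sym_n(321)$ and $\Sym_n(312)$ into the reflections of those over $\Sym_n(123)$ and $\Sym_n(132)$, which you have already shown to be equal.)

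The more serious gap is that the majority of the proposition --- the classes $[123,213]_{\head}$ and $[231,321]_{\head}$, all four listed three-element classes, and the completeness claim that every remaining $\Pi\subseteq\Sym_3$ of size at most three forms a singleton class --- is only announced as ``bookkeeping'' and never executed. Establishing, say, $\{123,213\}\sim\{132,213\}\sim\{132,231\}$ requires an argument valid for all $n$, not a finite check: the paper supplies concrete structural bijections (a recursively defined map exchanging the increasing and decreasing middle block in the inflation form $231[1,\,\cdot\,,\sigma_1]$ for $\{132,213\}$ versus $\{123,213\}$, the end-placement decomposition of $\Sym_n(132,231)$ giving the $2^{k-2}$ count, and the symmetry $\head(\sigma^c)=n-\head(\sigma)+1$ to handle the complementary classes), and analogous bijections for the size-three subsets via their inflation forms, together with small-$n$ computations ruling out any further coincidences. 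These are precisely the ingredients your proposal still has to supply; describing the method and estimating the amount of case work does not discharge them.
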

\begin{proof}
The map $\psi:\Sym_n(321) \to \Sym_n(312)$ given by $\psi(\sigma) = \phi(\sigma^c)^c$, where $\phi:\Sym_n(123) \to \Sym_n(132)$ is the Simion-Schmidt bijection, clearly satisfies $\head(\psi(\sigma)) = \head(\sigma)$. Hence $[321]_{\head} = [312]_{\head}$.
Let $\sigma = a_1 a_2 \cdots a_n \in \Sym_n(132)$. 
According to the non-recursive description of the standard bijection $\Delta:\Sym_n(132) \to \Dyck_n$ (due to Krattenthaler \cite{Kra}), when $a_i$ is read from left to right we adjoin as many $U$-steps as necessary 
to the path obtained thus far to reach height $h_j+1$, followed by a $D$-step to height $h_j$. Here $h_j$ is the number of letters in $a_{j+1} \cdots a_n$ which are larger than $a_j$. As such, the number of permutations $\sigma \in \Sym_n(132)$ with $\head(\sigma) = k$ is given by the number of Dyck paths starting with exactly $n-k+1$ number of $U$-steps. These are equivalently enumerated by the number of lattice paths with steps $(1,0)$ and $(0,1)$ from $(1,n-k+1)$ to $(n,n)$ staying weakly above the line $y=x$. By \cite[Theorem 10.3.1]{Kra2} the number of such paths are given by 
$$
\binom{n+n-1-(n-k+1)}{n-(n-k+1)} - \binom{n+n-1-(n-k+1)}{n-1+1} = C_{n-1,k-1}.
$$

The map $\varphi:\Sym_n(231) \to \Sym_n(213)$ recursively defined by $\varphi(213[1,\sigma_1,\sigma_2]) = 231[1, \varphi(\sigma_1), \varphi(\sigma_2)]$, where $\sigma_1 \in \Sym_{k-1}(231)$ and $\sigma_2 \in \Sym_{n-k}(231)$, is clearly a $\head$-preserving bijection. Hence $[231]_{\head} = [213]_{\head}$. Since $|\Sym_k(231)|= C_k$ it follows from the inflation form that there are $C_{k-1}C_{n-k}$ permutations $\sigma \in \Sym_n(231)$ with $\head(\sigma) = k$. 

If $\sigma \in \Sym_n(132,231)$, then $\sigma$ is either decomposed as $12[\sigma_1, 1]$ or as $21[1,\sigma_1]$ where $\sigma_1 \in \Sym_{n-1}(132,231)$. Thus the letters $1,2,\dots, n$ are in reverse order recursively placed at the beginning or at the end of the permutation. For $\sigma$ to have $\head(\sigma) = k$, the letters $k+1, \dots, n$ must be placed in increasing order at the end and $k$ at the beginning. Remaining $k-1$ letters may be placed on either end giving two choices each (except for the last letter). Hence there exists $2^{k-2}$ permutations $\sigma \in \Sym_n(132,231)$ with $\head(\sigma) = k$ for $k > 1$. 

Let $\iota_k = 12\cdots k$ and $\delta_k = k\cdots 21$ for $k \geq 1$. 
If $\sigma \in \Sym_n(123,213)$ and $\head(\sigma) = k$, then $\sigma = 231[1, \delta_{n-k}, \sigma_1]$ for some $\sigma_1 \in \Sym_{k-1}(123,213)$. It is easy to see that $|\Sym_k(123,213)|= 2^{k-1}$ by induction. Hence $[132,231]_{\head} = [123,213]_{\head}$. 

If $\sigma \in \Sym_n(132,213)$, then $\sigma = 231[1,\iota_{n-k}, \sigma_1]$ where $\sigma_1 \in \Sym_{k-1}(132,213)$. The map $\chi:\Sym_n(132,213) \to \Sym_n(123,213)$ recursively given by $\chi(231[1,\iota_{n-k}, \sigma_1]) = 231[1,\delta_{n-k}, \chi(\sigma_1)]$ is clearly a $\head$-preserving bijection. Hence $[132,213]_{\head} = [123,213]_{\head}$. Remaining equivalences and their distributions may be deduced from the fact that $\head(\sigma^c) = n - \head(\sigma) + 1$. The equivalences between the size three subsets can be proved similarly via bijections between their corresponding inflation forms (the inflation forms can be referenced in \cite{DDJSS}). The details for these are left to the reader.
\end{proof} 

\section{Summary and conjectures}
\label{sec::sumconj}
In Table \ref{equidistsum} we summarize the equidistributions proved in this paper (listed in black). In a given cell corresponding to $\text{stat}_{\text{row}}$ and $\text{stat}_{\text{col}}$, a pair of patterns $\pi_1,\pi_2$ denotes the equidistribution
$$
\sum_{\sigma \in \Sym_n(\pi_1)} q^{\text{stat}_{\text{row}}(\sigma)} \sum_{\sigma \in \Sym_n(\pi_2)} q^{\text{stat}_{\text{col}}(\sigma)}.
$$
The equidistributions corresponding to $\text{stat}_{\text{row}} = \maj = \text{stat}_{\text{col}}$ and $\text{stat}_{\text{row}} = \inv = \text{stat}_{\text{col}}$ were proved in \cite{DDJSS}. The equidistributions between $\maj$, $\bastb$ and $\bastc$ can be proved in a similar way to Proposition \ref{majmakA}, since the inverse map is the right bijection in two of the cases and the rest can be deduced via the $\maj$-Wilf equivalences from \cite{DDJSS}. Remaining equidistributions were either proved directly or follow by combining equidistributions proved in this paper. For instance $\sum_{\sigma \in \Sym_n(213)} q^{\maj(\sigma)} = \sum_{\sigma \in \Sym_n(231)} q^{\fozea(\sigma)}$ is deduced by combining Proposition \ref{majmakA} and Theorem \ref{makfoze}.
\begin{conjecture}
Table \ref{equidistsum} is the complete table of Mahonian $3$-function equidistributions over permutations avoiding a single classical pattern of length three.
\end{conjecture}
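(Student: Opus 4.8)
The plan is to split the statement into two halves and handle them separately. The \emph{soundness} half --- that every pair of patterns recorded in a cell of Table \ref{equidistsum} really does encode an equidistribution of the two indicated Mahonian $3$-functions --- is already established: each entry follows either from the results of the present paper (Propositions \ref{majmakA} and \ref{makWilf}, Theorems \ref{majmakB}, \ref{makfoze}, \ref{madsistfozeeq}, Corollary \ref{inv321mad231}, and the combinations of these spelled out before the conjecture), from \cite{DDJSS}, or by applying the trivial bijections to such an equidistribution. Thus the whole open content of the conjecture is the \emph{completeness} half: no equidistribution of two Mahonian $3$-functions over two length-three pattern classes exists beyond those in the table.

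For completeness, the first step is to reduce to a finite problem. The group generated by reverse and complement acts on $\Sym_3$, with orbits $\{123,321\}$ and $\{132,213,231,312\}$, and it simultaneously permutes the Mahonian $3$-functions of Table \ref{mafunc} --- the reverse or complement of a sum of vincular-pattern statistics of length at most three is again such a sum, and it is Mahonian because precomposing a Mahonian statistic with a bijection of $\Sym_n$ preserves its distribution. Hence if $\sum_{\sigma \in \Sym_n(\pi_1)} q^{\stat_1(\sigma)} = \sum_{\sigma \in \Sym_n(\pi_2)} q^{\stat_2(\sigma)}$ holds, so does its image under any such bijection, and it suffices to test one representative from each orbit of quadruples $(\stat_1,\pi_1,\stat_2,\pi_2)$; the inverse map is not a symmetry for vincular patterns, but it is precisely what produces several of the table entries through Proposition \ref{majmakA}. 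After this reduction only finitely many candidate equidistributions remain outside the table, with $\stat_1,\stat_2$ ranging over the fourteen functions of Table \ref{mafunc}. For each such candidate one computes the two polynomials for $n = 1, 2, 3, \dots$, using the recursion of Theorem \ref{genfunc} for the $\Sym_n(312)$ cases and its reverse/complement analogues for the remaining patterns, and exhibits the least $n$ at which they differ.

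The main obstacle is that this finite computation does not by itself certify completeness: not finding a discriminating $n$ with $n \leq N$ neither proves divergence at some larger $n$ nor excludes a genuine equidistribution that the table omits. To upgrade the computation to a proof one would need, for every candidate that still agrees up to a moderate $N$, a structural reason for eventual divergence --- for instance, showing that the two generating functions satisfy incompatible algebraic or continued-fraction relations in the spirit of Theorem \ref{cfrak} and \cite{CEKS}, so that agreement on finitely many coefficients forces agreement on all of them. Establishing such a dichotomy uniformly, across all fourteen Mahonian $3$-functions (up to trivial bijection) and both pattern orbits, is the genuinely difficult part, which is why the assertion is offered as a conjecture rather than proved outright.
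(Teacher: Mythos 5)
This statement is offered in the paper only as a conjecture: the paper's sole support for completeness is a computer verification up to $n=10$ that no equidistributions beyond those in Table \ref{equidistsum} occur, together with the already-proved entries, and no structural completeness argument is given. Your proposal matches this exactly — soundness of the table entries from the proved results and trivial-bijection symmetry, completeness left open because finite checking cannot certify it — so it is a faithful account of the status of the statement rather than a proof, just as in the paper.
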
 \noindent
We have verified all entries in Table \ref{equidistsum} up to $n = 10$ by computer. Other than than the entries in Table \ref{equidistsum} there are no additional equidistributions (over permutations avoiding a single classical pattern of length three) between the statistics in Table \ref{mafunc}.

\begin{table}[ht]
\centering
\resizebox{\textwidth}{!}{\begin{tabular}{ r|c|c|c|c|c|c|c|c|c|c|c|c|c|c| }
\multicolumn{1}{r}{}
 &  \multicolumn{1}{c}{$\maj$}
 & \multicolumn{1}{c}{$\inv$} 
 & \multicolumn{1}{c}{$\mak$}
 & \multicolumn{1}{c}{$\makl$}
 & \multicolumn{1}{c}{$\mad$}
 & \multicolumn{1}{c}{$\basta$} 
 & \multicolumn{1}{c}{$\bastb$}
 & \multicolumn{1}{c}{$\bastc$}
 & \multicolumn{1}{c}{$\fozea$}
 & \multicolumn{1}{c}{$\fozeb$}
 & \multicolumn{1}{c}{$\fozec$}
 & \multicolumn{1}{c}{$\sista$}
 & \multicolumn{1}{c}{$\sistb$}
 & \multicolumn{1}{c}{$\sistc$}            
 \\
\cline{2-15}
$\maj$ &  \begin{tabular}{@{}c@{}}$132, 231$ \\ $213, 312$ \end{tabular}
  &  & 
\begin{tabular}{@{}c@{}} $123, 123$ \\ $132, 132$ \\ $132, 312$ \\ $213, 213$ \\ $213, 231$ \\ $231, 132$ \\ $231, 312$ \\ $312, 213$ \\ $
312, 231$ \\ $321, 321$ \end{tabular}  
&  
\begin{tabular}{@{}c@{}}$\color{red}{132, 231}$ \\ $\color{red}{213, 312}$ \\ $\color{red}{231, 231}$ \\ $\color{red}{312, 312}$ \\ $321, 321$ \end{tabular}  
&

&
\begin{tabular}{@{}c@{}}$\color{red}{132, 213}$ \\ $\color{red}{213, 231}$ \\ $\color{red}{231, 213}$ \\ $\color{red}{312, 231}$ \end{tabular} 

&
\begin{tabular}{@{}c@{}}$132, 132$ \\ $231, 132$ \end{tabular} 
&
\begin{tabular}{@{}c@{}}$213, 231$ \\ $312, 231$ \end{tabular}

&
\begin{tabular}{@{}c@{}}$132, 132$ \\ $213, 231$ \\ $231, 132$ \\ $312, 231$ \end{tabular}  
&
&
&
&
&
\\ 
\cline{2-15}
$\inv$ & $\bullet$ &  \begin{tabular}{@{}c@{}}$132, 213$ \\ $231, 312$  \end{tabular} 
&
&
& 
\begin{tabular}{@{}c@{}}$231, 312$ \\ $312, 312$ \\ $321, 231$  \end{tabular} 
&
&
&
&
&
\begin{tabular}{@{}c@{}}$231, 132$ \\ $312, 132$ \\ $\color{red}{321, 213}$  \end{tabular}
&
\begin{tabular}{@{}c@{}}$\color{red}{231, 132}$ \\ $\color{red}{312, 132}$ \\ $\color{red}{321, 213}$  \end{tabular}
&
\begin{tabular}{@{}c@{}}$231, 213$ \\ $312, 213$ \\ $321, 132$  \end{tabular} 
&
\begin{tabular}{@{}c@{}}$231, 231$ \\ $312, 231$ \\ $321, 132$  \end{tabular}
&
\begin{tabular}{@{}c@{}}$231, 132$ \\ $312, 132$ \\ $321, 231$  \end{tabular}
\\
\cline{2-15}
$\mak$ & $\bullet$ & $\bullet$ 
& \begin{tabular}{@{}c@{}} $132, 312$ \\ $213,231$ \end{tabular}   
&
\begin{tabular}{@{}c@{}} $\color{red}{132, 231}$ \\ $\color{red}{213,312}$ \\ $\color{red}{231,312}$ \\ $\color{red}{312,231}$ \\ $\color{red}{321,321}$ \end{tabular}   
&
&
\begin{tabular}{@{}c@{}} $\color{red}{132, 213}$ \\ $\color{red}{213, 231}$ \\ $\color{red}{231,231}$ \\ $\color{red}{312,213}$ \end{tabular}  
&
\begin{tabular}{@{}c@{}} $132, 132$ \\ $312, 132$ \end{tabular}  
&
\begin{tabular}{@{}c@{}} $213, 231$ \\ $231, 231$ \end{tabular} 
&
\begin{tabular}{@{}c@{}} $132, 132$ \\ $213, 231$ \\ $231,231$ \\ $312,132$ \end{tabular} 
&
&
& 
&
&
\\
\cline{2-15}
$\makl$
&
$\bullet$
&
$\bullet$
&
$\bullet$
&
&
&
\begin{tabular}{@{}c@{}} $\color{red}{132, 132}$ \\ $\color{red}{231,213}$ \\ $\color{red}{312,231}$ \end{tabular}
&
\begin{tabular}{@{}c@{}} $\color{red}{231, 132}$ \end{tabular}
&
\begin{tabular}{@{}c@{}} $\color{red}{312, 231}$ \end{tabular}
&
\begin{tabular}{@{}c@{}} $\color{red}{132, 213}$ \\ $\color{red}{231,132}$ \\ $\color{red}{312,231}$ \end{tabular}
&
&
&
&
&
\\
\cline{2-15}
$\mad$ 
& 
$\bullet$
&
$\bullet$
&
$\bullet$
&
$\bullet$
&
&
&
&
&
&
\begin{tabular}{@{}c@{}} $\color{red}{231, 213}$ \\ $312, 132$ \end{tabular}
&
\begin{tabular}{@{}c@{}} $\color{red}{231,213}$ \\ $\color{red}{312,132}$ \end{tabular}
&
\begin{tabular}{@{}c@{}} $231, 132$ \\ $312, 213$ \end{tabular}
&
\begin{tabular}{@{}c@{}} $\color{red}{132, 213}$ \\ $231, 132$ \\ $312, 231$ \end{tabular}
&
\begin{tabular}{@{}c@{}} $\color{red}{213, 213}$ \\ $231, 231$ \\ $312, 132$ \end{tabular}
\\
\cline{2-15}
$\basta$
&
$\bullet$
&
$\bullet$
&
$\bullet$
&
$\bullet$
&
$\bullet$
&
&
\begin{tabular}{@{}c@{}} $\color{red}{213, 132}$ \end{tabular}
&
\begin{tabular}{@{}c@{}} $\color{red}{231, 231}$ \end{tabular}
&
\begin{tabular}{@{}c@{}} $\color{red}{123, 123}$ \\ $\color{red}{213, 132}$ \\ $\color{red}{132, 213}$ \\ $\color{red}{231, 231}$ \\ $\color{red}{312, 312}$\\ $321, 321$\end{tabular}
&
&
&
&
&
\\
\cline{2-15}
$\bastb$
&
$\bullet$
&
$\bullet$
&
$\bullet$
&
$\bullet$
&
$\bullet$
&
$\bullet$
&
&
&
\begin{tabular}{@{}c@{}} $132, 132$ \end{tabular}
&
&
&
&
&

\\
\cline{2-15}
$\bastc$
&
$\bullet$
&
$\bullet$
&
$\bullet$
&
$\bullet$
&
$\bullet$
&
$\bullet$
&
$\bullet$
&
&
\begin{tabular}{@{}c@{}} $231, 231$ \end{tabular}
&
&
&
&
&

\\
\cline{2-15}
$\fozea$
&
$\bullet$
&
$\bullet$
&
$\bullet$
&
$\bullet$
&
$\bullet$
&
$\bullet$
&
$\bullet$
&
$\bullet$
&
&
&
&
&
&
\\
\cline{2-15}
$\fozeb$
&
$\bullet$
&
$\bullet$
&
$\bullet$
&
$\bullet$
&
$\bullet$
&
$\bullet$
&
$\bullet$
&
$\bullet$
&
$\bullet$
&
&
\begin{tabular}{@{}c@{}} $\color{red}{132, 132}$ \\ $\color{red}{213, 213}$ \end{tabular}
&
\begin{tabular}{@{}c@{}} $132, 213$ \\ $\color{red}{213, 132}$ \end{tabular}
&
\begin{tabular}{@{}c@{}} $132, 231$ \\ $\color{red}{213, 132}$ \end{tabular}
&
\begin{tabular}{@{}c@{}} $132, 132$ \\ $\color{red}{213, 231}$ \end{tabular}
\\
\cline{2-15}
$\fozec$
&
$\bullet$
&
$\bullet$
&
$\bullet$
&
$\bullet$
&
$\bullet$
&
$\bullet$
&
$\bullet$
&
$\bullet$
&
$\bullet$
&
$\bullet$
&
&
\begin{tabular}{@{}c@{}} $\color{red}{213, 132}$ \\ $\color{red}{132, 213}$ \end{tabular}
&
\begin{tabular}{@{}c@{}} $\color{red}{213, 132}$ \\ $\color{red}{132, 231}$ \end{tabular}
&
\begin{tabular}{@{}c@{}} $\color{red}{132, 132}$ \\ $\color{red}{213, 231}$ \end{tabular}

\\
\cline{2-15}
$\sista$
&
$\bullet$
&
$\bullet$
&
$\bullet$
&
$\bullet$
&
$\bullet$
&
$\bullet$
&
$\bullet$
&
$\bullet$
&
$\bullet$
&
$\bullet$
&
$\bullet$
&
&
\begin{tabular}{@{}c@{}} $132, 132$ \\ $213, 231$ \\ $\color{red}{312, 312}$ \end{tabular}
&
\begin{tabular}{@{}c@{}} $132, 231$ \\ $213, 132$ \\ $\color{red}{231, 312}$ \end{tabular}

\\
\cline{2-15}
$\sistb$
&
$\bullet$
&
$\bullet$
&
$\bullet$
&
$\bullet$
&
$\bullet$
&
$\bullet$
&
$\bullet$
&
$\bullet$
&
$\bullet$
&
$\bullet$
&
$\bullet$
&
$\bullet$
&

&
\begin{tabular}{@{}c@{}} $132, 231$ \\ $231, 132$ \end{tabular}
\\
\cline{2-15}
$\sistc$
&
$\bullet$
&
$\bullet$
&
$\bullet$
&
$\bullet$
&
$\bullet$
&
$\bullet$
&
$\bullet$
&
$\bullet$
&
$\bullet$
&
$\bullet$
&
$\bullet$
&
$\bullet$
&
$\bullet$
&
\\
\cline{2-15}
\end{tabular}
}

\begin{tabular}{@{}c@{}} \\ \end{tabular}

\caption{Established equidistributions in black and conjectured equidistributions in red.} \label{equidistsum}
\end{table}
\noindent \newline \newline
\textbf{Acknowledgements.}
The author is grateful to Petter Br\"and\'en, Samu Potka and Bruce Sagan for comments and discussions.

\end{document}